\renewcommand{\Re}{\operatorname{Re}}
\newcommand{\C}{\mathbb{C}}
\newcommand{\R}{\mathbb{R}}
\newcommand{\Z}{\mathbb{Z}}
\newcommand{\Q}{\mathbb{Q}}
\newcommand{\N}{\mathbb{N}}
\newcommand{\s}{\operatorname{s}}
\newcommand{\dist}{\operatorname{dist}}
\newcommand{\rk}{\operatorname{rk}}
\newcommand{\kL}{\mathfrak{k}}
\newcommand{\mL}{\mathfrak{m}}
\newcommand{\aL}{\mathfrak{a}}
\newcommand{\gL}{\mathfrak{g}}
\newcommand{\pL}{\mathfrak{p}}
\newcommand{\nL}{\mathfrak{n}}
\newcommand{\bL}{\mathfrak{b}}
\newcommand{\Gl}{\operatorname{GL}}
\newcommand{\pr}{\operatorname{pr}}
\newcommand{\Rep}{\operatorname{Rep}}
\newcommand{\Id}{\operatorname{Id}}
\newcommand{\Spin}{\operatorname{Spin}}
\newcommand{\reg}{\operatorname{reg}}
\newcommand{\Ad}{\operatorname{Ad}}
\newcommand{\Sl}{\operatorname{SL}}
\newcommand{\SO}{\operatorname{SO}}
\newcommand{\Iim}{\operatorname{Im}}
\newcommand{\Mat}{\operatorname{Mat}}
\newcommand{\inj}{\operatorname{inj}}
\newcommand{\modo}{\operatorname{mod}}
\newcommand{\spec}{\operatorname{spec}}
\newcommand{\CC}{\operatorname{C}}
\newcommand{\Symm}{\operatorname{Sym}}
\newcommand{\vol}{\operatorname{vol}}
\newcommand{\Tr}{\operatorname{Tr}}
\newcommand{\tr}{\operatorname{tr}}
\newcommand{\End}{\operatorname{End}}
\newcommand{\Real}{\operatorname{Re}}
\newcommand{\un}{\operatorname{un}}
\newcommand{\sym}{\operatorname{sym}}
\newtheorem {thrm}{Theorem}[section]
\newtheorem {prop}[thrm] {Proposition}
\newtheorem {lem}[thrm] {Lemma}
\newtheorem {kor}[thrm]{Corollary}
\theoremstyle{definition}
\newtheorem{defn}[thrm] {Definition}
\theoremstyle{remark}
\newtheorem {bmrk}[thrm] {Remark}
\begin{document}
\title[]{The analytic torsion and its asymptotic behaviour for sequences 
of hyperbolic manifolds of finite volume}

\date{\today}

\author{Werner M\"uller}
\address{Universit\"at Bonn\\
Mathematisches Institut\\
Endenicher Allee 60\\
D -- 53115 Bonn, Germany}
\email{mueller@math.uni-bonn.de}

\author{Jonathan Pfaff}
\address{Universit\"at Bonn\\
Mathematisches Institut\\
Endenicher Alle 60\\
D -- 53115 Bonn, Germany}
\email{pfaff@math.uni-bonn.de}

\keywords{analytic torsion, locally symmetric manifolds}
\subjclass{Primary: 58J52, Secondary: 53C53}

\begin{abstract}
In this paper we study the regularized analytic torsion of finite volume 
hyperbolic manifolds. We consider sequences of coverings $X_i$ of a fixed 
hyperbolic orbifold $X_0$. Our main result is that for
certain sequences of coverings and strongly acyclic flat bundles, the analytic 
torsion divided by the index
of the covering, converges to the $L^2$-torsion. Our results apply to certain 
sequences of arithmetic groups, in particular to sequences of principal 
congruence subgroups of $\SO^0(d,1)(\Z)$ and 
to sequences of principal congruence subgroups or Hecke subgroups of Bianchi 
groups. 
\end{abstract}

\maketitle

\section{Introduction}
\setcounter{equation}{0}
The aim of this paper is to extend the results of Bergeron and Venkatesh
\cite{BV}
on the asymptotic equality of analytic and $L^2$-torsion for strongly 
acyclic representations from the compact to the finite volume case. 

Therefore, we shall first recall the results of Bergeron and Venkatesh 
about the compact case. 
Let $G$ be a semisimple Lie group of non-compact type. Let $K$ be a maximal
compact subgroup of $G$ and let $\widetilde X=G/K$  be the associated Riemannian
symmetric space endowed with a $G$-invariant metric. Let $\Gamma\subset G$ be 
a co-compact discrete subgroup.
For simplicity we assume that $\Gamma$ is torsion free. Let $X:=\Gamma\backslash
\widetilde X$. Then $X$ is a compact locally symmetric manifold of non-positive
curvature. Let $\tau$ be an irreducible  finite dimensional complex
representation of $G$. Let $E_\tau\to X$ be the flat vector bundle associated
to the restriction of $\tau$ to $\Gamma$. By \cite{MtM}, $E_\tau$ can be
equipped
with a canonical Hermitian fibre metric, called admissible, which is unique up 
to scaling. Let $\Delta_P(\tau)$ be the Laplace operator on $E_\tau$-valued
$p$-forms with respect to the metric on $X$ and in $E_\tau$. Let 
$\zeta_p(s;\tau)$ be the zeta function of $\Delta_p(\tau)$ (see \cite{Sh}).
Then the analytic torsion $T_X(\tau)\in\R^+$ is defined by
\begin{equation}\label{analtors1}
T_X(\tau):=\exp\left(\frac{1}{2}\sum_{p=1}^d (-1)^pp\frac{d}{ds}\zeta_p(s;\tau)
\big|_{s=0}\right).
\end{equation}
On the other hand there is the $L^2$-torsion $T^{(2)}_X(\tau)$ (see \cite{Lo}).
Since the heat kernels on $\widetilde X$ are $G$-invariant, one has
\begin{equation}\label{l2-tor}
\log T^{(2)}_X(\tau)=\vol(X) t^{(2)}_{\widetilde X}(\tau),
\end{equation}
where $t^{(2)}_{\widetilde X}(\tau)$ is a constant that depends only on 
$\widetilde X$ and $\tau$. 
It is an interesting problem to see if the $L^2$-torsion can be approximated
by the torsion of finite coverings $X_i\to X$. This problem has been studied
by Bergeron and Venkatesh \cite{BV} under a certain non-degeneracy condition
on $\tau$. Representations which satisfy this condition are called 
{\it strongly acyclic}. One of the main results of \cite{BV} is as follows. 
Let $X_i\to X$, $i\in\N$, be a sequence of finite coverings of $X$. 
Let $\tau$ be strongly acyclic. Let $\inj(X_i)$ denote the injectivety radius
of $X_i$ and assume that $\inj(X_i)\to\infty$ as $i\to\infty$. Then
by \cite[Theorem 4.5]{BV} one has
\begin{equation}\label{limittor}
\lim_{i\to\infty}\frac{\log T_{X_i}(\tau)}{\vol(X_i)}=t^{(2)}_{\widetilde
X}(\tau).
\end{equation}
If $\rk_\C(G)-\rk_\C(K)=1$, one can show that $t^{(2)}_{\widetilde X}(\tau)
\neq0$.
Using the equality of analytic torsion and Reidemeister torsion \cite{Mu2},
Bergeron and Venkatesh \cite{BV} used this result to study the growth of
torsion in the cohomology of cocompact arithmetic groups. Furthermore, recently
P. Scholze \cite{Sch} has shown the existence of Galois representations 
associated with mod $p$ cohomology of locally symmetric spaces for $\Gl_n$ over
a totally real or CM field. This makes it desirable 
to extend these results in various directions. Especially, one would like to 
extend \eqref{limittor} to the finite volume case. However, due to the 
presence of the continuous spectrum of the Laplace operators in the 
non-compact case, one encounters serious technical difficulties in attempting
to generalize  \eqref{limittor} to the finite volume case. In \cite{Ra1} J. 
Raimbault has dealt with finite volume hyperbolic 3-manifolds. In \cite{Ra2}
he applied this to study the growth of torsion in the cohomology for certain
sequences of congruence subgroups of Bianchi groups. His 
result generalized the exponential growth of torsion, obtained in \cite{Pf2} 
for local systems induced from the even symmetric powers of the 
standard representation of $\Sl_2(\C)$, to all strongly acyclic local systems 
and furthermore they implied that the limit of the normalized torsion size 
exists. 
The main
purpose of the present paper is to extend \eqref{limittor} to 
hyperbolic manifolds of finite volume and arbitrary dimension. 
\newline

So from now on we let $G=\Spin(d,1)$, $K=\Spin(d)$ or  $G=\SO^0(d,1)$ and
$K=\SO(d)$ for $d>1$. Then $K$ is a 
maximal compact  subgroup of $G$. Let $\widetilde X=G/K$. Choose an invariant 
Riemannian metric
on $\widetilde X$. If the metric is suitably normalized, $\widetilde X$ is 
isometric to the $d$-dimensional hyperbolic space $\mathbb{H}^d$. Let
$\Gamma\subset
G$ be a torsion free lattice, i.e., $\Gamma$ is a discrete, torsion free 
subgroup with $\vol(\Gamma\backslash G)<\infty$. Let
$X=\Gamma\backslash\widetilde X$. Then
$X$ is an oriented $d$-dimensional hyperbolic manifold of finite volume. 
Let $\tau$ be an irreducible finite dimensional complex representation of $G$
and
let $E_\tau\to X$ be the flat vector bundle associated to $\tau$ as above,
endowed with an admissible Hermitian fibre metric. The first problem is to
define the analytic torsion if $X$ is non-compact, which is the case we are 
interested in. Then the Laplace operator $\Delta_p(\tau)$ 
has a non-empty continuous spectrum and hence, the zeta function 
$\zeta_p(s;\tau)$ can not be defined in the usual way. It requires an
additional regularization. We use the method introduced in \cite{MP2}.
One uses an appropriate height function 
to truncate $X$ at sufficiently high level $Y>Y_0$ to get a 
compact submanifold $X(Y)\subset X$ with boundary $\partial X(Y)$. Let
$K^{p,\tau}(t,x,y)$
be the kernel of the heat operator $\exp(-t\Delta_p(\tau))$. Then it follows 
that there exists $\alpha(t)\in\R$ such that 
$\int_{X(Y)} \tr K^{p,\tau}(x,x,t)\,dx -
\alpha(t)\log Y$ has a limit as $Y\to\infty$. Then we put
\begin{equation}\label{regtrace1}
\Tr_{\reg}\left(e^{-t\Delta_p(\tau)}\right):=\lim_{Y\to\infty}\left(
\int_{X(Y)}\tr K^{p,\tau}(t,x,x)\,dx-\alpha(t)\log Y\right).
\end{equation}

As pointed out in \cite[Remark 5.4]{MP2}, the regularized trace
is not uniquely defined. It depends on the choice of truncation parameters on
the 
manifold $X$. However, if a locally symmetric space 
$X_0=\Gamma_0\backslash\widetilde{X}$ of finite volume is given and if
truncation parameters 
on $X_0$ are fixed, then every locally symmetric manifold 
$X$ which is a finite covering of $X_0$ is canonically equipped with 
truncation parameters: One simply pulls back the truncation on $X_0$ to a
truncation on $X$ via 
the covering map. This will be explained in detail in section \ref{sectr} of the
present paper. 

We remark that we do not assume that the group
$\Gamma_0$ is 
torsion-free. In fact, the typical example for $\Gamma_0$ in the arithmetic case
will be $\Gamma_0=\SO^0(d,1)(\Z)$ 
or $\Gamma_0=\Sl_2(\mathcal{O}_D)$, where $\mathcal{O}_D$ is the ring of 
integers of an imaginary quadratic number field $\mathbb{Q}(\sqrt{-D})$, 
$D\in\mathbb{N}$ being square-free. Then 
$\Gamma$ will denote, for example, a principal congruence subgroup. However, we
assume 
that $\Gamma$ is not only a torsion-free lattice but also that $\Gamma$
satisfies the following condition: For each $\Gamma$-cuspidal parabolic 
subgroup $P^\prime$ of $G$ one has
\begin{equation}\label{assumGamma}
\Gamma\cap P^\prime=\Gamma\cap N_{P^\prime},
\end{equation}
where $N_{P^\prime}$ denotes the nilpotent radical of $P^\prime$. 
This condition holds naturally, for example, for all principal congruence 
subgroups of sufficiently high level.

Let $\theta$ be the
Cartan involution of $G$ with respect to our choice of $K$. Let $\tau_\theta=
\tau\circ\theta$. If $\tau\not\cong\tau_\theta$, it can be shown that 
$\Tr_{\reg}\left(e^{-t\Delta_p(\tau)}\right)$ is exponentially decreasing as
$t\to\
\infty$ and admits an asymptotic expansion as $t\to 0$. Therefore, the 
regularized zeta function $\zeta_p(s;\tau)$ of $\Delta_p(\tau)$ can be defined 
as in the compact case by
\begin{equation}\label{regzeta}
\zeta_p(s;\tau):=\frac{1}{\Gamma(s)}\int_0^\infty 
\Tr_{\reg}\left(e^{-t\Delta_p(\tau)}\right) t^{s-1}\;dt.
\end{equation}
The integral converges absolutely and uniformly on compact subsets of the
half-plane $\Real(s)>d/2$ and admits a meromorphic extension to the whole 
complex plane. The zeta function is regular at $s=0$. So in analogy with the
compact case, the analytic torsion $T_X(\tau)\in\R^+$ can be defined by the
same formula \eqref{analtors1}. 

In even dimensions, $T_X(\tau)$ is rather 
trivial (see \cite{MP2}). 
So we assume that $d=2n+1$, $n\in\N$. To formulate
our main result, we need to introduce some notation. We let $\Gamma_0$ be a 
fixed lattice in $G$ and we let $X_0:=\Gamma_0\backslash\widetilde{X}$. We let 
$\Gamma_i$, $i\in\mathbb{N}$ be a sequence of finite index torsion-free
subgroups of
$\Gamma_0$. 
Then following Raimbault \cite{Ra1}, in definition
\ref{Defcuspun} we define the condition on the sequence $\Gamma_i$ to be
cusp-uniform. This condition is, roughly spoken, a condition on 
the shape of the 2n-tori which form the cross-sections of the cusps of the
manifolds $X_i:=\Gamma_i\backslash\widetilde{X}$. For 
more details, we refer to section \ref{geoms}. We let $\ell(\Gamma_i)$ be the 
length of the shortest closed geodesic on $X_i$. We assume that truncation
parameters 
on the orbifold $X_0$ are fixed and for each $i$ and $\tau$ with
$\tau\neq\tau_\theta$ we define the analytic torsion 
with respect to the induced truncation parameters on $X_i$ as above. 
Then our main
result can be stated as the following theorem. 

\begin{thrm}\label{Mainthrm}
Let $\Gamma_0$ be a lattice in $G$. Let $\Gamma_i$, $i\in\mathbb{N}$ be a
sequence of 
finite-index subgroups of $\Gamma_0$ which is cusp-uniform. Assume that for
$i\geq 1$ the group 
$\Gamma_i$ is 
torsion free and satisfies \eqref{assumGamma}. Let  
$\mathfrak{P}_{\Gamma_i}=\{P_{i,j},\:
j=1,\dots,\kappa(\Gamma_i)\}$ be a set of representatives of
$\Gamma_i$-conjugacy 
classes of $\Gamma_i$-cuspidal parabolic subgroups of $G$ and let $N_{P_{i,j}}$
denote 
the nilpotent radical of $P_{i,j}$. Assume that 
$\lim_{i\to\infty}\ell(\Gamma_i)=\infty$ and that 

\begin{align}\label{condseq}
\lim_{i\to\infty}\frac{1}{[\Gamma_0:\Gamma_i]}\bigl(\kappa(\Gamma_i)+\sum_{j=1}^
{\kappa(\Gamma_i)}\log[
\Gamma_0\cap N_{P_{i,j}}:\Gamma_i\cap N_{P_{i,j}}]\bigr)=0.
\end{align}
Then for $X_i:=\Gamma_i\backslash\widetilde{X}$ and every $\tau$ with
$\tau\neq\tau_\theta$ one has
\[
\lim_{i\to\infty}\frac{\log T_{X_i}(\tau)}{[\Gamma_0:\Gamma_i]}=
t^{(2)}_{\widetilde X}(\tau)\vol(X_0).
\]
\end{thrm}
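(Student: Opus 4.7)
The plan is to combine a Mellin-transform expression for the analytic torsion with a Selberg-type trace formula for the regularized heat trace, and then to control each geometric and spectral piece of the trace formula uniformly in the sequence $\Gamma_i$. One starts from
$$\log T_{X_i}(\tau)=\frac{1}{2}\sum_{p=1}^{d}(-1)^p p\frac{d}{ds}\Big|_{s=0}\frac{1}{\Gamma(s)}\int_0^\infty \Tr_{\reg}\left(e^{-t\Delta_p(\tau)}\right)t^{s-1}\,dt$$
and splits the integral at $t=1$. The hypothesis $\tau\neq\tau_\theta$ provides strong acyclicity, hence a uniform positive spectral gap for the operators $\Delta_p(\tau)$ on every $X_i$, which gives uniform exponential decay at large $t$; the small-$t$ asymptotics already established in the preceding sections control integrability near $t=0$.

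The central step is to apply the trace formula developed earlier to write
$$\Tr_{\reg}\left(e^{-t\Delta_p(\tau)}\right)=I_i(t)+H_i(t)+U_i(t)+E_i(t)+R_i(t),$$
where $I_i$ is the identity contribution, $H_i$ the sum over non-trivial semisimple conjugacy classes of $\Gamma_i$, $U_i$ the unipotent/cuspidal geometric contribution coming from $N_{P_{i,j}}$, $E_i$ the continuous-spectrum contribution coming from Eisenstein series and the scattering operator, and $R_i$ the remaining discrete spectral contribution. The identity term equals $\vol(X_i)\,k_t^{(p)}(\tau)=[\Gamma_0:\Gamma_i]\vol(X_0)\,k_t^{(p)}(\tau)$, with $k_t^{(p)}(\tau)$ intrinsic to $\widetilde X$ and $\tau$. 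After Mellin transform and the alternating sum over $p$, this piece contributes exactly $[\Gamma_0:\Gamma_i]\vol(X_0)\,t^{(2)}_{\widetilde X}(\tau)$, which is the target limit.

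For the hyperbolic contribution, the orbital integrals decay exponentially in the length of the underlying closed geodesic. Combined with $\ell(\Gamma_i)\to\infty$ and the uniform spectral gap from strong acyclicity, this yields $H_i(t)=o([\Gamma_0:\Gamma_i])$ uniformly on $(0,\infty)$, in a sense integrable against the Mellin kernel. Thus the hyperbolic contribution is negligible after normalization.

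The genuine difficulty lies in the cuspidal package $U_i+E_i+R_i$. A careful decomposition shows that each cusp $P_{i,j}$ contributes three types of terms: a nilpotent orbital contribution essentially proportional to $\log[\Gamma_0\cap N_{P_{i,j}}:\Gamma_i\cap N_{P_{i,j}}]$, coming from Jacobi-type summation over the lattice in $N_{P_{i,j}}$; a uniformly bounded per-cusp contribution from the height-function truncation and the residual spectrum; and a spectral term involving the logarithmic derivative of $\det C_{\Gamma_i}(s,\tau)$ of the scattering operator. The cusp-uniform hypothesis controls the shape of the cross-sectional $2n$-tori, which in turn yields bounds on the scattering determinants that are uniform in $i$. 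Summing over cusps produces an estimate of the form
$$\bigl|U_i(t)+E_i(t)+R_i(t)\bigr|\le C(t)\Bigl(\kappa(\Gamma_i)+\sum_{j=1}^{\kappa(\Gamma_i)}\log\bigl[\Gamma_0\cap N_{P_{i,j}}:\Gamma_i\cap N_{P_{i,j}}\bigr]\Bigr),$$
with $C(t)$ whose Mellin transform is regular at $s=0$. Hypothesis \eqref{condseq} then forces this whole block to be $o([\Gamma_0:\Gamma_i])$, and combining with the identity contribution yields the claim. The hardest part is producing the uniform-in-$i$ bounds on the Eisenstein contribution and on the small-$t$ behaviour near the cusps, since the cusps themselves change along the tower; this is exactly where cusp-uniformity and \eqref{assumGamma} are decisive, and where extending Raimbault's three-dimensional argument to arbitrary odd $d$ requires the geometric and harmonic-analytic machinery developed in the earlier sections.
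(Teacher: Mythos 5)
Your overall architecture matches the paper's: split the Mellin integral in time, use the geometric side of the trace formula so that the identity term produces $[\Gamma_0:\Gamma_i]\vol(X_0)\,t^{(2)}_{\widetilde X}(\tau)$, kill the hyperbolic term via $\ell(\Gamma_i)\to\infty$, and bound the cuspidal terms by $\kappa(\Gamma_i)+\sum_j\log[\Gamma_0\cap N_{P_{i,j}}:\Gamma_i\cap N_{P_{i,j}}]$ so that \eqref{condseq} finishes the job. However, there are two genuine gaps. First, your large-time estimate is not justified: you assert that strong acyclicity gives ``uniform exponential decay at large $t$,'' but $\Tr_{\reg}(e^{-t\Delta_{X_i,p}(\tau)})$ is not a sum over eigenvalues; it contains the continuous-spectrum term $\int_{\R}e^{-t\lambda^2}\Tr\bigl(\widetilde{\boldsymbol{C}}(\sigma,\nu,-i\lambda)\frac{d}{dz}\widetilde{\boldsymbol{C}}(\sigma,\nu,i\lambda)\bigr)\,d\lambda$, and the spectral gap says nothing about its size or sign. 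The paper's Proposition \ref{estimregtr} handles this via the factorization of $\det\overline{\mathbf{C}}(\sigma,\nu,\lambda)$ (Theorem \ref{ThrmC}), where the exponential factor $q$ is bounded \emph{uniformly over the coverings} by redoing the resolvent estimates of \cite{Mu1} with explicit dependence on the truncation parameters pulled back from $X_0$ (Lemma \ref{Lemheightfnctn}), and where the sign information $a\le 0$, $\Real(\eta)<0$ is essential to dominate the $t\ge 10$ value by the $t=1$ value plus $\vol(X_i)$. This uniformity does \emph{not} come from cusp-uniformity, as you claim: cusp-uniformity is used only to bound the Epstein zeta constants $\alpha(\Gamma_i)$ appearing in the parabolic term of the geometric side (Proposition \ref{Propalpha} via Lemma \ref{LemEpsteinII} and Mahler's criterion). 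Attributing the scattering-matrix control to cusp-uniformity leaves the actual mechanism unexplained.

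Second, your claim that the hyperbolic contribution is $o([\Gamma_0:\Gamma_i])$ ``uniformly on $(0,\infty)$'' cannot hold as stated. The Gaussian orbital-integral bound $e^{-\ell(\gamma)^2/(4t)}$ degrades as $t\to\infty$, while the number of closed geodesics of length at most $R$ grows like $e^{2nR}$, so the term-by-term geometric estimate (Lemma \ref{Lemhyp}) is only valid on a compact interval $(0,T]$ with constants depending on $T$. The paper therefore never controls the geometric side for large $t$; it splits at an arbitrary $T$, treats $t\ge T$ purely spectrally via Proposition \ref{estimregtr} (getting an error $O(e^{-cT})$ after normalization), treats $t\le T$ geometrically (getting $t^{(2)}_{\widetilde X}(\tau)+O(e^{-cT})$ from the identity term), and only then lets $T\to\infty$. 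Splitting at the fixed time $t=1$, as you propose, would leave a non-vanishing error in the identity contribution and no way to control the remaining terms for $t\ge 1$ without the spectral-side machinery. These two points are precisely where the passage from the compact to the finite-volume case requires new input, and they need to be supplied for the argument to close.
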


We remark that the condition \eqref{condseq} is independent of the choice of
$\mathfrak{P}_{\Gamma_i}$. 
Furthermore, one immediately sees that it is satisfied, for
example, if
\begin{align}\label{condnew}
\lim_{i\to\infty}\frac{\kappa(\Gamma_i)\log[\Gamma_0:\Gamma_i]}{[
\Gamma_0:\Gamma_i]}=0.
\end{align}

For hyperbolic $3$-manifolds, Theorem \ref{Mainthrm} was proved by J. Raimbault
\cite{Ra1} under additional assumptions on the intertwining operators. We 
emphasize that we don't need this assumption.
 
For sequences of cusp uniform normal subgroups $\Gamma_i$ of $\Gamma_0$
which 
exhaust $\Gamma_0$, the assumption \eqref{condseq} is easily verified 
and we have the following theorem for the case of normal subgroups.

\begin{thrm}\label{Mainthrm2}
Let $\Gamma_0$ be a lattice in $G$ and let $\Gamma_i$, $i\in\mathbb{N}$, be a
sequence 
of finite-index normal subgroups which is cusp uniform and such that 
each $\Gamma_i$, $i\geq 1$, is torsion-free and satisfies \eqref{assumGamma}. If
$\lim_{i\to\infty}[\Gamma_0:\Gamma_i]=\infty$ and if each
$\gamma_0\in\Gamma_0-\{1\}$ 
only belongs to finitely many $\Gamma_i$, then for each $\tau$ with
$\tau\neq\tau_\theta$ one has  
\begin{align}\label{eqtheo}
\lim_{i\to\infty}\frac{\log T_{X_i}(\tau)}{[\Gamma:\Gamma_i]}=
t^{(2)}_{\widetilde X}(\tau)\vol(X_0).
\end{align}
In particular, if under the same assumptions $\Gamma_i$ is a tower of normal
subgroups,
i.e. $\Gamma_{i+1}\subset
\Gamma_i$ for each $i$ and 
$\cap_i\Gamma_i=\{1\}$, then \eqref{eqtheo} holds.
\end{thrm}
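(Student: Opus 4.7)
The plan is to derive Theorem~\ref{Mainthrm2} from Theorem~\ref{Mainthrm} by showing that, for a cusp-uniform sequence of normal subgroups $\Gamma_i\triangleleft\Gamma_0$, the assumptions $[\Gamma_0:\Gamma_i]\to\infty$ and ``each $\gamma_0\in\Gamma_0\setminus\{1\}$ lies in only finitely many $\Gamma_i$'' automatically imply both $\ell(\Gamma_i)\to\infty$ and the cuspidal-growth condition \eqref{condseq}.

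For $\ell(\Gamma_i)\to\infty$, I would fix $L>0$ and use that, since $X_0$ has finite volume, there are only finitely many $\Gamma_0$-conjugacy classes of hyperbolic elements of $\Gamma_0$ with translation length $\leq L$. Picking one representative from each such class, the ``only finitely many'' hypothesis places all of them outside $\Gamma_i$ for $i$ large; because $\Gamma_i\triangleleft\Gamma_0$ is a union of $\Gamma_0$-conjugacy classes, no $\Gamma_0$-translate of these representatives lies in $\Gamma_i$ either, forcing $\ell(\Gamma_i)>L$.

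To verify \eqref{condseq}, I would exploit normality to parametrize $\Gamma_i$-conjugacy classes of cusps by $\Gamma_0$-data. Fix representatives $P_1,\ldots,P_{\kappa(\Gamma_0)}$ of the $\Gamma_0$-conjugacy classes of $\Gamma_0$-cuspidal parabolics (they represent the $\Gamma_0$-classes of $\Gamma_i$-cuspidal parabolics as well, since $[\Gamma_0:\Gamma_i]<\infty$) and set $\Gamma_{0,P_k}:=\Gamma_0\cap P_k$. The $\Gamma_i$-conjugacy classes above $P_k$ are in bijection with $\Gamma_i\backslash\Gamma_0/\Gamma_{0,P_k}$ (using self-normalization of $P_k$), and normality turns each double coset $\Gamma_i g \Gamma_{0,P_k}=g\Gamma_i\Gamma_{0,P_k}$ into a single left coset of the subgroup $\Gamma_i\Gamma_{0,P_k}$, so their number equals $[\Gamma_0:\Gamma_i]/[\Gamma_{0,P_k}:\Gamma_{0,P_k}\cap\Gamma_i]$. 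Normality likewise yields $\Gamma_i\cap gN_{P_k}g^{-1}=g(\Gamma_i\cap N_{P_k})g^{-1}$ for $g\in\Gamma_0$, so the nilpotent-radical indices in \eqref{condseq} depend only on the $\Gamma_0$-class of $P_{i,j}$ and equal $m_{k,i}:=[\Gamma_0\cap N_{P_k}:\Gamma_i\cap N_{P_k}]$. Assembling these pieces yields
\[
\frac{\kappa(\Gamma_i)+\sum_j\log[\Gamma_0\cap N_{P_{i,j}}:\Gamma_i\cap N_{P_{i,j}}]}{[\Gamma_0:\Gamma_i]} = \sum_{k=1}^{\kappa(\Gamma_0)} \frac{1+\log m_{k,i}}{[\Gamma_{0,P_k}:\Gamma_{0,P_k}\cap\Gamma_i]},
\]
and since $(\Gamma_0\cap N_{P_k})/(\Gamma_i\cap N_{P_k})$ injects into $\Gamma_{0,P_k}/(\Gamma_{0,P_k}\cap\Gamma_i)$, the denominator is $\geq m_{k,i}$, bounding each summand by $(1+\log m_{k,i})/m_{k,i}$. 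To close, I would observe that $\Gamma_0\cap N_{P_k}$ is a rank-$(d-1)$ lattice in $N_{P_k}\cong\R^{d-1}$ and that the hypothesis forces any finite subset of its nontrivial elements to be disjoint from $\Gamma_i\cap N_{P_k}$ eventually, so these sublattices become arbitrarily sparse, $m_{k,i}\to\infty$, and the sum of finitely many terms tends to zero.

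For the tower case, $\Gamma_{i+1}\subset\Gamma_i$ together with $\cap_i\Gamma_i=\{1\}$ immediately yields the ``only finitely many'' property, and if $[\Gamma_0:\Gamma_i]$ were bounded, the descending chain would stabilize to an infinite common subgroup, contradicting $\cap_i\Gamma_i=\{1\}$; hence $[\Gamma_0:\Gamma_i]\to\infty$ as well, and the first part of the theorem applies. The main technical obstacle is the double-coset bookkeeping needed to express $\kappa(\Gamma_i)$ and the cusp-indices purely in terms of fixed $\Gamma_0$-data; once that reduction is in place, \eqref{condseq} collapses to the elementary fact $(1+\log m)/m\to 0$ applied to each of the finitely many $\Gamma_0$-parabolic classes.
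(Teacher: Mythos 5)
Your proposal is correct and follows essentially the same route as the paper: the systole claim is proved exactly as you do, via the finiteness of $\Gamma_0$-conjugacy classes of semisimple elements with $\ell(\gamma)\leq R$ (estimate \eqref{Estgeod}) together with normality, and your double-coset computation verifying \eqref{condseq} is precisely the content of the paper's Proposition \ref{Propalpha2}, including the inequality $[\Gamma_0\cap P_{0,l}:\Gamma_i\cap P_{0,l}]\geq[\Gamma_0\cap N_{P_{0,l}}:\Gamma_i\cap N_{P_{0,l}}]$ and the observation that these indices tend to infinity.
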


We shall now give applications of our main results to the case of arithmetic 
groups. Firstly let $\Gamma_0:=\SO^0(d,1)(\Z)$. Then $\Gamma_0$ is a lattice
in
$\SO^0(d,1)$. For 
$q\in\mathbb{N}$ let $\Gamma(q)$ be the principal congruence subgroup of
level $q$ (see section \ref{secSO}). Using a result of Deitmar and
Hoffmann \cite{DH}, it follows that the family of principal congruence 
subgroups is cusp uniform (see Lemma \ref{Cuspun2}). Thus, Theorem
\ref{Mainthrm2} implies the following corollary. 

\begin{kor}\label{Korcong1}
For any finite-dimensional irreducible representation $\tau$ of
$\SO^0(d,1)$
with 
$\tau\neq\tau_\theta$ the principal congruence subgroups $\Gamma(q)$, $q\geq 3
$, of
$\Gamma_0:=\SO^0(d,1)(\Z)$ satisfy 
\begin{align*}
\lim_{q\to\infty}\frac{\log T_{X_q}(\tau)}{[\Gamma:\Gamma(q)]}=
t^{(2)}_{\widetilde X}(\tau)\vol(X_0),
\end{align*}
where $X_q:=\Gamma(q)\backslash\mathbb{H}^d$ and
$X_0:=\Gamma_0\backslash\mathbb{H}^d$. 
\end{kor}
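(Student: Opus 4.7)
The plan is to verify the hypotheses of Theorem~\ref{Mainthrm2} for the family $\Gamma(q)$, $q\geq 3$, with $\Gamma_0=\SO^0(d,1)(\Z)$, and then apply that theorem directly. No genuinely new analytic or geometric input is needed beyond the auxiliary facts about congruence subgroups collected in the body of the paper; the corollary is essentially an unpacking of Theorem~\ref{Mainthrm2} in this arithmetic setting.

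First I would recall that $\Gamma_0=\SO^0(d,1)(\Z)$ is a lattice in $G=\SO^0(d,1)$ by Borel--Harish-Chandra. Each $\Gamma(q)$ is the kernel of reduction modulo $q$, hence a normal subgroup of $\Gamma_0$ of finite index, and $[\Gamma_0:\Gamma(q)]\to\infty$ as $q\to\infty$ since the image contains groups of unboundedly large order (e.g.\ one gets a lower bound from the order of $\SO(d,1)(\Z/q\Z)$). By Minkowski's lemma, $\Gamma(q)$ is torsion free for $q\geq 3$. The condition \eqref{assumGamma} is standard for principal congruence subgroups of sufficiently high level and, as the paper notes, holds for all $q\geq 3$: a rational parabolic $P'$ has a Levi decomposition defined over $\Q$, and any element of $\Gamma(q)\cap P'$ has semisimple Levi component congruent to the identity modulo $q$, forcing this Levi component to be trivial for $q\geq 3$. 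The remaining hypothesis, cusp-uniformity of the sequence $\{\Gamma(q)\}$, is precisely the content of Lemma~\ref{Cuspun2}, which relies on the result of Deitmar and Hoffmann \cite{DH}.

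Next I would verify the separation condition: each $\gamma_0\in\Gamma_0\setminus\{1\}$ belongs to only finitely many $\Gamma(q)$. This is immediate from the definition, since $\gamma_0\in\Gamma(q)$ means every matrix entry of $\gamma_0-\Id$ is divisible by $q$; as $\gamma_0\neq\Id$, at least one of these entries is a nonzero integer, and only finitely many $q$ can divide it. With all hypotheses of Theorem~\ref{Mainthrm2} verified, the conclusion
\[
\lim_{q\to\infty}\frac{\log T_{X_q}(\tau)}{[\Gamma_0:\Gamma(q)]}=t^{(2)}_{\widetilde X}(\tau)\vol(X_0)
\]
follows for every irreducible finite dimensional $\tau$ with $\tau\not\cong\tau_\theta$.

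The only step where there might be a subtle point is the verification of cusp-uniformity, since the geometry of the cusps must be controlled in a uniform way as $q$ varies; but this is already packaged into Lemma~\ref{Cuspun2}, which in turn rests on the cited work \cite{DH}. Thus the corollary is a direct consequence of Theorem~\ref{Mainthrm2} together with the classical facts about principal congruence subgroups of the integral orthogonal group.
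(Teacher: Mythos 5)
Your proposal is correct and follows essentially the same route as the paper: verify the hypotheses of Theorem~\ref{Mainthrm2} (normality, torsion-freeness and condition \eqref{assumGamma} for $q\ge 3$, cusp-uniformity via Lemma~\ref{Cuspun2}, and the fact that each $\gamma_0\neq 1$ lies in only finitely many $\Gamma(q)$) and then apply that theorem. The only cosmetic differences are that the paper invokes Borel's notion of neatness where you cite Minkowski's lemma and a Levi-component argument, and it deduces $[\Gamma_0:\Gamma(q)]\to\infty$ from $[\Gamma_0\cap N:\Gamma(q)\cap N]\to\infty$ rather than from the size of the image mod $q$.
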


Secondly, we give some specific applications in the 3-dimensional case. There is
a natural isomorphism
$\Spin(3,1)\cong\Sl_2(\C)$. If $\rho$ is the
standard-representation of $\Sl_2(\C)$ on $\C^2$, then the finite-dimensional 
irreducible representations of $\Sl_2(\C)$ are given as $\Symm^m\rho\otimes
\Symm^n\overline{\rho}$, $m,n\in\mathbb{N}$. Here
$\Symm^k$ denotes the $k$-th symmetric power and $\overline{\rho}$ denotes the
complex-conjugate representation 
of $\rho$.  One has $(\Symm^m\rho\otimes
\Symm^n\overline{\rho})_\theta=\Symm^n\rho\otimes \Symm^m\overline{\rho}$. For
$D\in\mathbb{N}$ square-free let $\mathcal{O}_D$ be 
the ring of integers of the imaginary quadratic number field $\Q(\sqrt{-D})$
and let $\Gamma(D):=\Sl_2(\mathcal{O}_D)$. Then $\Gamma(D)$ is a lattice 
in $\Sl_2(\C)$. If $\aL$ is a non-zero ideal in $\mathcal{O}_D$, let
$\Gamma(\aL)$ be the 
associated principal congruence subgroup of level $\aL$ (see section
\ref{secSL2}). Then Theorem 
\ref{Mainthrm2} implies the following corollary.

\begin{kor}\label{KorCongr2}
If $\aL_i$ is a sequence
of non-zero ideals in $\mathcal{O}_D$ such that each $N(\aL_i)$ is sufficiently
large and such that 
$\lim_{i\to\infty}N(\aL_i)=\infty$, then for any representation
$\tau=\Symm^n\rho\otimes\Symm^m\bar{\rho}$ 
with $m\neq n$ and for $X_D:=\Gamma(D)\backslash\mathbb{H}^3$ and
$X_i:=\Gamma(\aL_i)\backslash\mathbb{H}^3$ one has 
\begin{align}
\lim_{i\to\infty}\frac{\log T_{X_i}(\tau)}{[\Gamma(D):\Gamma(\aL_i)]}=
t^{(2)}_{\widetilde X}(\tau)\vol(X_D).
\end{align}

\end{kor}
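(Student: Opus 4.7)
The plan is to derive the corollary as a direct application of Theorem \ref{Mainthrm2} to $\Gamma_0 := \Gamma(D) = \Sl_2(\mathcal{O}_D)$ and $\Gamma_i := \Gamma(\aL_i)$. The hypothesis $\tau \neq \tau_\theta$ is ensured by $m \neq n$ together with the identity $(\Symm^m\rho \otimes \Symm^n\bar\rho)_\theta = \Symm^n\rho \otimes \Symm^m\bar\rho$ recalled just before the corollary, so the analytic torsion $T_{X_i}(\tau)$ is defined and Theorem \ref{Mainthrm2} becomes applicable once its structural hypotheses are all verified.

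Normality of $\Gamma(\aL_i)$ in $\Gamma(D)$ is immediate from its description as the kernel of the reduction map $\Sl_2(\mathcal{O}_D) \to \Sl_2(\mathcal{O}_D/\aL_i)$. For $N(\aL_i)$ sufficiently large, standard coefficient-size arguments force $\Gamma(\aL_i)$ to be torsion-free and to satisfy the cuspidal condition \eqref{assumGamma}; this is exactly what the hypothesis "each $N(\aL_i)$ is sufficiently large" is meant to absorb. The index $[\Gamma(D):\Gamma(\aL_i)]$ is, up to a bounded factor, of order $N(\aL_i)^3$, so it tends to infinity along with $N(\aL_i)$. For the descending-intersection condition, fix $\gamma \in \Gamma(D)\setminus\{1\}$ and pick a nonzero entry $a \in \mathcal{O}_D$ of $\gamma - \Id$; membership $\gamma \in \Gamma(\aL_i)$ forces $\aL_i \mid (a)$, and since the Dedekind domain $\mathcal{O}_D$ admits only finitely many ideal divisors of a fixed nonzero principal ideal while $N(\aL_i) \to \infty$, only finitely many of the $\aL_i$ can qualify.

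The main technical ingredient, and the step on which real work is required, is cusp-uniformity in the sense of Definition \ref{Defcuspun}. The cuspidal parabolics of $\Gamma(D)$ are indexed by the ideal class group of $\mathcal{O}_D$, and at each cusp the unipotent lattice $\Gamma(\aL_i) \cap N_{P}$ sits as an explicit finite-index sublattice of $\Gamma(D) \cap N_{P}$ whose shape is governed by $\aL_i$. The strategy is to convert $N(\aL_i)\to\infty$ into uniform control on the shape of the $2$-torus cross-sections of the cusps of $X_i = \Gamma(\aL_i)\backslash\mathbb{H}^3$, paralleling the role played by \cite{DH} and Lemma \ref{Cuspun2} in the $\SO^0(d,1)$ case; I expect this to be carried out in section \ref{secSL2}. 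Once cusp-uniformity is in hand, every hypothesis of Theorem \ref{Mainthrm2} is satisfied and the conclusion follows with $\vol(X_D) = \vol(X_0)$.
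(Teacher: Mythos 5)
Your overall route is the paper's: Corollary \ref{KorCongr2} is obtained by checking the hypotheses of Theorem \ref{Mainthrm2} for $\Gamma_0=\Gamma(D)$, $\Gamma_i=\Gamma(\aL_i)$, and your verifications of normality, of $[\Gamma(D):\Gamma(\aL_i)]\to\infty$ via the index formula $N(\aL)^3\prod_{\pL\mid\aL}(1-N(\pL)^{-2})$, of torsion-freeness and \eqref{assumGamma} for $N(\aL_i)$ large (the paper delegates this to the proof of Lemma 4.1 of \cite{Pf2}), and of the condition that each $\gamma\neq 1$ lies in only finitely many $\Gamma(\aL_i)$ (your divisibility argument is exactly right) all match what the paper does.

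The gap is cusp-uniformity: you correctly single it out as the one non-routine hypothesis, but then you do not prove it — you describe a ``strategy'' and defer to section \ref{secSL2}. Worse, the mechanism you gesture at, namely converting $N(\aL_i)\to\infty$ into control of the cusp cross-sections, is not the right one: cusp-uniformity here has nothing to do with the norms tending to infinity, and Proposition \ref{PropCuspun} in fact establishes it for the set of \emph{all} principal congruence (and Hecke) subgroups simultaneously. The actual argument is the finiteness of the ideal class group. If $\eta$ is a cusp with scaling matrix $B_\eta=\begin{pmatrix}\alpha&\beta\\ \gamma&\delta\end{pmatrix}\in\Sl_2(F)$ and $\mathfrak{u}$ is the $\mathcal{O}_D$-module generated by $\gamma$ and $\delta$, then $B_\eta\Gamma(\aL)_\eta B_\eta^{-1}\cap N$ is the lattice $\aL\mathfrak{u}^{-2}\subset\C$. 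Two fractional ideals in the same ideal class differ by multiplication by an element of $F^*$, i.e.\ by a Euclidean similarity of $\C$, so after rescaling to unimodular covolume the isometry class of this lattice depends only on the class of $\aL\mathfrak{u}^{-2}$ in $\mathcal{I}_F$; since $\#\mathcal{I}_F=d_F<\infty$, the lattices $\Lambda_{P|P'}(\Gamma(\aL))$ range over a finite, hence compact, subset of $\mathcal{P}(\nL)$, which is criterion (3) of Lemma \ref{Lemcuspun}. This is the missing ingredient; with it supplied, your reduction to Theorem \ref{Mainthrm2} goes through.
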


Finally, due to their arithmetic significance, in the 3-dimensional 
case we also want to treat Hecke subgroups of the Bianchi groups. These
groups do not fall directly in the framework of our two main  theorems, since 
their systole does not necessarily tend to infinity if their index in the 
Bianchi groups does. However, a slight modification 
of the proof of our main results will also give the corresponding statement 
for these groups. More precisely, for a non-zero ideal $\aL$ of $\mathcal{O}_D$
let $\Gamma_0(\aL)$ be the 
corresponding Hecke subgroup. Actually, since these 
groups are not torsion-free, we have to take a fixed torsion-free subgroup 
$\Gamma'$ of $\Gamma(D)$ of finite index which satisfies assumption
\eqref{assumGamma}, for 
example a principal congruence subgroup of sufficiently high level, and
consider 
the intersections $\Gamma_0'(\aL):=\Gamma_0(\aL)\cap\Gamma'$. Then we have the
following theorem: 

\begin{thrm}\label{ThrmCongr3}
If $\aL_i$ is a sequence
of non-zero ideals in $\mathcal{O}_D$ such that 
$\lim_{i\to\infty}N(\aL_i)=\infty$, then for any representation
$\tau=\Symm^n\rho\otimes\Symm^m\bar{\rho}$ 
with $m\neq n$ and for $X_D:=\Gamma(D)\backslash\mathbb{H}^3$, 
$X_i':=\Gamma_0'(\aL_i)\backslash\mathbb{H}^3$ one has 
\begin{align}
\lim_{i\to\infty}\frac{\log T_{X_i'}(\tau)}{[\Gamma(D):\Gamma_0'(\aL_i)]}=
t^{(2)}_{\widetilde X}(\tau)\vol(X_D).
\end{align}
\end{thrm}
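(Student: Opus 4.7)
My plan is to mimic the proof of Theorem \ref{Mainthrm2}, which presumably proceeds by expressing $\log T_{X_i'}(\tau)$ via the regularized trace of $\Delta_p(\tau)$ and a truncated Selberg-type trace formula, and then by showing that, after dividing by $[\Gamma(D):\Gamma_0'(\aL_i)]$, only the identity/Plancherel contribution survives in the limit. The three pieces to analyze are: the identity term, which directly yields $t^{(2)}_{\widetilde X}(\tau)\vol(X_D)$ since $\vol(X_i')=[\Gamma(D):\Gamma_0'(\aL_i)]\,\vol(X_D)$; the cuspidal/parabolic/intertwining contribution; and the hyperbolic sum over nontrivial semisimple conjugacy classes. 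In Theorem \ref{Mainthrm2} the last is killed by $\ell(\Gamma_i)\to\infty$, and the modification needed here must bypass that hypothesis, since for Hecke-type congruence subgroups the systole need not grow with the index.

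For the cuspidal side, I would first verify that $\{\Gamma_0'(\aL_i)\}$ is cusp-uniform in the sense of Definition \ref{Defcuspun} and satisfies the analogue of \eqref{condseq}. The cusps of $\Gamma_0(\aL_i)$ are parametrized by the standard Hecke double-coset decomposition on $\Pp^1(\Q(\sqrt{-D}))$, whose cardinality and whose cross-sectional lattices $\Gamma_0(\aL_i)\cap N_P$ are controlled by an explicit divisor-sum bounded polynomially in $N(\aL_i)$, whereas the index $[\Gamma(D):\Gamma_0'(\aL_i)]$ grows essentially like a positive power of $N(\aL_i)$. Choosing $\Gamma'$ to be a principal congruence subgroup of sufficiently high level ensures torsion-freeness and forces \eqref{assumGamma} for $\Gamma_0'(\aL_i)$ by eliminating nontrivial unit-diagonal elements in each Borel. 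The cuspidal and Eisenstein analysis then runs as in the main theorem.

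The main modification concerns the hyperbolic term. The key observation is that, thanks to \eqref{assumGamma} being enforced on $\Gamma_0'(\aL_i)$, every non-identity semisimple element of $\Gamma_0'(\aL_i)$ admits a conjugate in $\Gamma'$ with nonzero lower-left entry $c\in\mathcal{O}_D\setminus\{0\}$; otherwise all conjugates would lie in a Borel and, being non-unipotent, would violate \eqref{assumGamma}. For a fixed $\Gamma'$-conjugacy class $[\gamma]_{\Gamma'}$, the set of lower-left entries realized across the orbit is a subset of $\mathcal{O}_D$ depending only on $[\gamma]_{\Gamma'}$, so as $N(\aL_i)\to\infty$ only finitely many $i$ admit a representative with $c\in\aL_i$. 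Combined with strong acyclicity of $\tau=\Symm^n\rho\otimes\Symm^m\bar{\rho}$ for $m\neq n$ (giving polynomial bounds on $\tr\tau(\gamma)$) and the Gaussian factor $e^{-\ell(\gamma)^2/4t}$ in the heat kernel, a dominated-convergence argument then shows that the normalized hyperbolic contribution vanishes in the limit.

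The hard part is the quantitative geometric estimate in the last paragraph: one must bound, uniformly in $i$, the number of $\Gamma_0'(\aL_i)$-conjugacy classes of translation length $\leq\ell_0$ relative to $[\Gamma(D):\Gamma_0'(\aL_i)]$, for every fixed $\ell_0>0$. This reduces to understanding how the Hecke condition $c\in\aL_i$ cuts down the $\Gamma'$-orbit of lower-left entries of a fixed-trace element of $\Sl_2(\mathcal{O}_D)$, a congruence-geometric statement whose careful formulation is the real subtlety. Once this is established, the remaining steps of the proof follow those of Theorem \ref{Mainthrm2} verbatim.
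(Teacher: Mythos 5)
Your overall architecture (identity term survives, cusp/Eisenstein contribution controlled by cusp-uniformity and a cusp-counting bound, hyperbolic term needing a new argument since the systole does not grow) matches the paper's. The cuspidal part of your sketch is essentially what the paper does: it verifies cusp-uniformity of the $\Gamma_0(\aL)$ via finiteness of the class group, and bounds $\kappa(\Gamma_0(\aL))$ by a divisor sum of size $O(N(\aL)^{1/2+\epsilon})$ against an index of size $\asymp N(\aL)$, giving condition \eqref{condnew}.

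However, your key observation for the hyperbolic term is false, and this is a genuine gap. You claim that for a fixed semisimple class $[\gamma]$ the set of lower-left entries realized on its orbit meets $\aL_i$ for only finitely many $i$. The set of realized lower-left entries is infinite and contains elements of arbitrarily large norm, so nothing prevents it from meeting ideals of arbitrarily large norm; concretely, $\gamma$ is conjugate into $\Gamma_0(\aL)$ exactly when its reduction mod $\aL$ fixes a point of $\mathbb{P}^1(\mathcal{O}_D/\aL)$, i.e.\ when its characteristic polynomial has a root mod $\aL$, which by Chebotarev happens for a positive density of prime ideals $\pL$. So a fixed loxodromic class persists in $\Gamma_0(\aL_i)$ for infinitely many $i$, and no dominated-convergence argument based on classes ``dropping out'' can work. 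What is actually needed — and what you explicitly defer as ``the real subtlety'' — is the quantitative content of the paper's proof: Proposition \ref{Props} bounds the multiplicity $c_{\Gamma_0(\aL)}(\gamma)$ (the number of fixed points of $\gamma$ on $\mathbb{P}^1(\mathcal{O}_D/\aL)$) by roughly $2^{\nu(\aL)}\sqrt{N(\aL)N(\bL)}$, where $\bL$ is the largest divisor of $\aL$ with $\gamma\in\tilde\Gamma(\bL)$, via a local analysis of the quadratic congruence for fixed points; and Lemma \ref{LemFGT} (from Finis--Grunewald--Tirao) bounds the number of classes of length $\le R$ lying in $\tilde\Gamma(\bL)$ by $N(\bL)^{-2}e^{(2+\delta)R}$. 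Summing over $\bL\mid\aL$ these two bounds balance to give $H_{X_i}(h_t^\tau)=O(N(\aL_i)^{3/4})e^{-c/t}$, which is $o([\Gamma(D):\Gamma_0(\aL_i)])$. Without these (or equivalent) estimates the proof is incomplete, and the qualitative mechanism you propose in their place does not hold.
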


We shall now outline our method to prove our main results. Let $d=2n+1$. We
assume that the representation $\tau$ is not invariant under the Cartan
involution. To
indicate the dependence of the heat operator, the regularized trace and 
other quantities on the covering $X_i$, we use the subscript $X_i$. Let
\begin{equation}\label{tor-heat}
K_{X_i}(t,\tau):= \frac{1}{2}\sum_{p=1}^d (-1)^p p \Tr_{\reg;X_i}
\left(e^{-t\Delta_{X_i,p}(\tau)}\right).
\end{equation}
As observed above, $K_{X_i}(t,\tau)$ is exponentially decreasing as $t\to\infty$
and admits an asymptotic expansion as $t\to0$. Thus the analytic torsion 
$T_{X_i}(\tau)\in\R^+$ can be defined by
\begin{equation}
\log T_{X_i}(\tau)=\frac{d}{ds}\left(\frac{1}{\Gamma(s)}\int_0^\infty
K_{X_i}(t,\tau) t^{s-1}\;dt\right)\bigg|_{s=0}.
\end{equation}
The integral converges for $\Re(s)>d/2$ and its value at $s=0$ is defined by
analytic continuation. For $T>0$ write
\begin{equation}\label{torsplit}
\log T_{X_i}(\tau)=\frac{d}{ds}\left(\frac{1}{\Gamma(s)}\int_0^T
K_{X_i}(t,\tau) t^{s-1}\;dt\right)\bigg|_{s=0}
+\int_T^\infty K_{X_i}(t,\tau) t^{-1}\;dt.
\end{equation} 
Now we study the behaviour as $i\to\infty$ of the terms on the right hand side.
We start with the second term. Our assumption about $\tau$ implies that the
spectrum of the Laplacians $\Delta_{X_i,p}$, $i\in\N$, have a uniform positive
lower bound. Using the definition \eqref{regtrace} of the regularized trace,
it follows that there exist constants $C_i,c>0$ such that for $t\ge 10$ we
have
\[
|K_{X_i}(t,\tau)|\le C_i e^{-ct}
\]
The problem is to estimate $C_i$. In Proposition \ref{estimregtr}, we will show
that there exists a constant $C$ such that for each $i$ and each $t\geq 10$ 
one has an estimation 
\begin{align}\label{estlongt}
|\Tr_{\reg;X_i}\left(e^{-t\Delta_{X_i,p}(\tau)}\right)|\le C e^{-ct}
\left(\Tr_{\reg;X_i}\left(e^{-\Delta_{X_i,p}(\tau)}\right)+\vol(X_i)\right)
\end{align} 
for each $p=1,\dots,d$. 
This estimate is easy to prove in the compact case and one does not 
need the term $\vol(X_i)$ here. More precisely, if 
$X_i$ is compact and if 
$\lambda_1(i)\le\lambda_2(i)\le\cdots$ 
are the eigenvalues of $\Delta_{X_i,p}(\tau)$, counted with multiplicity, 
then for $t\ge 2$ we have
\[
\Tr\left(e^{-t\Delta_{X_i,p}(\tau)}\right)=\sum_{j=1}^\infty e^{-t\lambda_j(i)}
\le 
e^{-t\lambda_1(i)/2}
\sum_{j=1}^\infty e^{-\lambda_j(i)}=e^{-t\lambda_1(i)/2}
\Tr\left(e^{-\Delta_{X_i,p}(\tau)}\right), 
\]
and the assumption on $\tau$ implies that there is $c>0$ such that $\lambda_1(i)
\ge c$ for all $i\in\N$. 

In the non-compact case, the proof of equation \eqref{estlongt} is more
difficult since 
one also has to deal with the contribution of the continuous spectrum to the
regularized trace, which
is given by the logarithmic derivative of certain intertwining operators. 
The key ingredient of our approach to treat the terms involving the intertwining
operators
is the factorization of the determinant of the intertwining operators, which 
we will study carefully under coverings in section \ref{secfact}. Our main 
result is Theorem \ref{ThrmC}. 

To estimate
$\Tr_{\reg;X_i}\left(e^{-\Delta_{X_i,p}}\right)$
we use that the regularized trace of the heat operator, up to a minor term,
is equal to the spectral side of the  Selberg trace formula applied to the
heat operator (see \cite{MP2}). Then we apply the Selberg trace formula
to express the regularized trace through the geometric side of the trace 
formula. More precisely, let $\widetilde E_\tau$ be the homogeneous vector
bundle over
$\widetilde X=G/K$ associated to $\tau|_K$ and let 
$\widetilde{\Delta}_p(\tau)$ be the Laplacian on
$\widetilde E_\tau$-valued $p$-forms on $\widetilde X$. The heat operator
$e^{-t\widetilde{\Delta}_p(\tau)}$ is a convolution operator with kernel
$H_t^{\nu_p(\tau)}\colon G\to \End(\Lambda^p\pL^*\otimes V_\tau)$. Let
$h_t^{\nu_p(\tau)}(g)=\tr H_t^{\nu_p(\tau)}(g)$, $g\in G$. Then by the trace
formula
we get
\begin{equation}\label{gstr}
\Tr_{\reg;X_i}\left(e^{-t\Delta_{X_i,p}(\tau)}\right)=
I_{X_1}(h_t^{\tau,p})+H_{X_1}
(h_t^{\tau,p})+T'_{X_1}(h_t^{\tau,p})+S_{X_1}(h_t^{\tau,p}),
\end{equation}
where $I_{X_i}$, $H_{X_i}$, $T'_{X_i}$, and $S_{X_i}$ are distributions on $G$ 
associated to the identity, the hyperbolic and the parabolic conjugacy classes
of $\Gamma_i$, respectively. The distributions are described in section 
\ref{geoms}. For example, the identity contribution is given by
\[
I_{X_i}(h_t^{\tau,p})=\vol(X_i)h_t^{\tau,p}(1).
\]
Now we put $t=1$ and estimate each term on the right hand side of
\eqref{gstr}. In this way we can conclude that there exist $C,c>0$ such that
for $t\ge 10$ and all $i\in\N$ we have 
\[
|K_{X_i}(t,\tau)|\le C(\vol(X_i)+\kappa(X_i)+\alpha(X_i))e^{-ct},
\]
where $\alpha(X_i)$ is defined in terms of the lattices associated to the
cross sections of the cusps of $X_i$ (see \eqref{Defalpha}). Using the 
assumptions of Theorem 
\ref{Mainthrm}, we finally get that there exist $C,c>0$ such that
\begin{equation}\label{largetime0}
\frac{1}{\vol(X_i)}\bigg|\int_T^\infty K_{X_i}(t,\tau)t^{-1}\;dt\bigg|\le 
C e^{-cT}
\end{equation}
for all $i\in\N$.  

To deal with the first term on the right hand side of \eqref{torsplit}, put
\begin{equation}
k_t^\tau:=\frac{1}{2}\sum_{p=1}^d (-1)^p p h_t^{\tau,p}.
\end{equation}
Then by \eqref{tor-heat} and \eqref{gstr} we get 
\begin{equation}\label{gmstr1}
K_{X_i}(t,\tau)=I_{X_1}(k_t^\tau)+H_{X_1}
(k_t^\tau)+T'_{X_1}(k_t^\tau)+S_{X_1}(k_t^\tau).
\end{equation}
Now we take the partial Mellin transform of each term on the right hand side,
take its derivative at $s=0$, and study its behaviour as $i\to\infty$. For
the contribution of the identity we get $\vol(X_i)(t_{\widetilde X}^{(2)}(\tau)
+O(e^{-cT}))$.  Using the assumptions of Theorem \ref{Mainthrm}, it follows
that the other terms, divided by  $[\Gamma_0:\Gamma_i]$, converge to $0$.
Thus we get
\begin{equation}\label{limitmell}
\lim_{i\to\infty}\frac{1}{[\Gamma_0:\Gamma_i]}\frac{d}{ds}\left(\frac{1}{
\Gamma(s)}
\int_0^T K_{X_i}(t,\tau) t^{s-1}\;dt\right)\bigg|_{s=0}=\vol(X_0)
(t^{(2)}_{\widetilde X}(\tau)+O(e^{-cT})).
\end{equation}
Combining \eqref{limitmell}, \eqref{torsplit} and \eqref{largetime0}, and
using that $T>0$ is arbitrary, Theorem \ref{Mainthrm} follows.

Theorem \ref{Mainthrm2} is a simple consequence of Theorem \ref{Mainthrm}.
For the corollaries we only need to verify that the assumptions of the main
theorems are satisfied. \newline

The paper is organized as follows. In section \ref{secprel} we fix some 
notation and collect some basic facts. In section \ref{seceis} we recall 
some facts about Eisenstein series and intertwining operators. Section 
\ref{secfact} deals with the factorization of the determinant of the 
$C$-matrix. The main result is Theorem \ref{ThrmC}. In section \ref{secbol}
we consider Bochner-Laplace operators and establish some properties of
their spectrum. In section \ref{sectr} we introduce the regularized trace of the
heat operator using the truncated heat kernel and express it in terms of  
spectral data of the corresponding Laplace operator. Section \ref{secexpdec}
deals with the estimation of the regularized trace of the heat operator for 
large time. The bound obtained in Proposition \ref{estimregtr} involves
the regularized trace of the heat operator at time $t=1$. In section \ref{geoms}
we use the geometric side of the trace formula to study this term in detail.
Of particular importance are the constants obtained from the
contribution of the parabolic conjugacy classes which we need to  estimate 
uniformly
with respect to the covering. In section \ref{secmainres} we prove our
main theorems. In the final sections \ref{secSO} and \ref{secSL2} we apply
our results to derive the corollaries.

\bigskip
{\bf Acknowledgement.}
We would like to thank Tobias Finis for several very helpful explanations 
concerning the Hecke subgroups of the Bianchi groups. 
In particular, Proposition \ref{Props} and its proof are due to Tobias Finis.

\section{Preliminaries}\label{secprel}
\setcounter{equation}{0}
We let $d=2n+1$, $n\in\N$ and we let either $G=\SO^0(d,1)$, $K=\SO(d)$ or
$G=\Spin(d,1)$,
$K=\Spin(d)$. Then $K$ is a maximal compact 
subgroup of $G$ and if the quotient $\widetilde{X}:=G/K$ is equipped with the 
$G$-invariant metric defined by \eqref{metr}, then
$\widetilde{X}$ is 
isometric to the $d$-dimensional hyperbolic space. Let $G=NAK$
be the Iwasawa decomposition of $G$ as in \cite[section 2]{MP2} and let $M$ be
the centralizer of $A$ in
$K$. Let $\gL$, $\nL$, $\aL$, $\kL$, $\mL$ denote the Lie algebras
of $G$, $N$, $A$ $K$ and $M$. 
Fix a Cartan subalgebra $\mathfrak{b}$ of $\mathfrak{m}$. 
Then
\begin{align*}
\mathfrak{h}:=\mathfrak{a}\oplus\mathfrak{b}
\end{align*}
is a Cartan subalgebra of $\mathfrak{g}$. We can identify
$\mathfrak{g}_\C\cong\mathfrak{so}(d+1,\C)$. Let $e_1\in\aL^*$ be the
positive restricted root defining $\mathfrak{n}$.
Then we fix $e_2,\dots,e_{n+1}\in
i\mathfrak{b}^*$ such that 
the positive roots $\Delta^+(\mathfrak{g}_\C,\mathfrak{h}_\C)$ are chosen as in
\cite[page 684-685]{Knapp2}
for the root system $D_{n+1}$. We let
$\Delta^+(\mathfrak{g}_\C,\mathfrak{a}_\C)$ be
the set of roots of $\Delta^+(\mathfrak{g}_\C,\mathfrak{h}_\C)$ which do not
vanish on $\aL_\C$. The positive roots
$\Delta^+(\mathfrak{m}_\C,\mathfrak{b}_\C)$
are chosen such that they are restrictions of elements from
$\Delta^+(\mathfrak{g}_\C,\mathfrak{h}_\C)$.
For $j=1,\dots,n+1$ let
\begin{equation}\label{rho}
\rho_{j}:=n+1-j.
\end{equation}
Then the half-sums of positive roots $\rho_G$ and $\rho_M$, respectively, are
given by
\begin{align}\label{Definition von rho(G)}
\rho_{G}:=\frac{1}{2}\sum_{\alpha\in\Delta^{+}(\mathfrak{g}_{\mathbb{C}},
\mathfrak{h}_\mathbb{C})}\alpha=\sum_{j=1}^{n+1}\rho_{j}e_{j};\quad
\rho_{M}:=\frac{1}{2}\sum_{\alpha\in\Delta^{+}(\mathfrak{m}_{\mathbb{C}},
\mathfrak{b}_{\mathbb{C}})}\alpha=\sum_{j=2}^{n+1}\rho_{j}e_{j}.
\end{align}
Put
\begin{align}\label{metr}
 \left<X,Y\right>_{\theta}:=-\frac{1}{2(d-1)}B(X,\theta(Y)),\quad X,Y\in
\mathfrak{g}.
\end{align}
Let ${{\mathbb{Z}}\left[\frac{1}{2}\right]}^{j}$ be the set of all 
$(k_{1},\dots,k_{j})\in\mathbb{Q}^{j}$ such that either all $k_{i}$ are 
integers or all $k_{i}$ are half integers. 
Let $\Rep(G)$ denote the set of finite dimensional irreducible
representations 
$\tau$ of $G$. These  
are parametrized by their highest weights
\begin{equation}\label{Darstellungen von G}
\Lambda(\tau)=k_{1}(\tau)e_{1}+\dots+k_{n+1}(\tau)e_{n+1};\:\:
k_{1}(\tau)\geq 
k_{2}(\tau)\geq\dots\geq k_{n}(\tau)\geq \left|k_{n+1}(\tau)\right|,
\end{equation}
where $(k_{1}(\tau),\dots, k_{n+1}(\tau))$ belongs to
${{\mathbb{Z}}\left[\frac{1}{2}\right]}^{n+1}$ if 
$G=\Spin(d,1)$ and to ${{\mathbb{Z}}}^{n+1}$ if $G=\SO^0(d,1)$. 
Moreover, the finite dimensional irreducible representations $\nu\in\hat{K}$ of
$K$ are
parametrized by their highest weights
\begin{equation}\label{Darstellungen von K}
\Lambda(\nu)=k_{2}(\nu)e_{2}+\dots+k_{n+1}(\nu)e_{n+1};\:\:
k_{2}(\nu)\geq 
k_{3}(\nu)\geq\dots\geq k_{n}(\nu)\geq k_{n+1}(\nu)\geq 0,
\end{equation} 
where $(k_{2}(\nu),\dots, k_{n+1}(\nu))$ belongs to
${{\mathbb{Z}}\left[\frac{1}{2}\right]}^{n}$ if 
$G=\Spin(d,1)$ and to ${{\mathbb{Z}}}^{n}$ if $G=\SO^0(d,1)$.
Finally, the  finite dimensional irreducible representations 
$\sigma\in\hat{M}$ of $M$ 
are parametrized by their highest weights
\begin{equation}\label{Darstellungen von M}
\Lambda(\sigma)=k_{2}(\sigma)e_{2}+\dots+k_{n+1}(\sigma)e_{n+1};\:\:
k_{2}(\sigma)\geq 
k_{3}(\sigma)\geq\dots\geq k_{n}(\sigma)\geq \left|k_{n+1}(\sigma)\right|,
\end{equation}
where $(k_{2}(\sigma),\dots, k_{n+1}(\sigma))$ belongs to
${{\mathbb{Z}}\left[\frac{1}{2}\right]}^{n}$, if 
$G=\Spin(d,1)$, and to ${{\mathbb{Z}}}^{n}$, if $G=\SO^0(d,1)$.
For $\nu\in\hat{K}$ and  $\sigma\in \hat{M}$ we denote by $[\nu:\sigma]$ the
multiplicity of $\sigma$ in the restriction of $\nu$ to $M$. 

Let $\Omega$, $\Omega_K$ and $\Omega_M$ be the Casimir elements of $G$, $K$
and $M$, respectively, with respect to the inner
product \eqref{metr}. Then by a standard computation one has
\begin{align}\label{Casimir}
\Omega=H_1^2-2nH_1+\Omega_M\quad\mod \nL U(\gL).
\end{align}

Let $M'$ be the normalizer of $A$ in $K$ and let $W(A)=M'/M$ be the 
restricted Weyl-group. It has order two and it acts on the finite-dimensional 
representations of $M$ as follows. Let $w_0\in W(A)$ be the non-trivial 
element and let $m_0\in M^\prime$ be a representative of $w_0$. Given 
$\sigma\in\hat M$, the representation $w_0\sigma\in \hat M$ is defined by
\[
w_0\sigma(m)=\sigma(m_0mm_0^{-1}),\quad m\in M.
\] 
Let
$\Lambda(\sigma)=k_{2}(\sigma)e_{2}+\dots+k_{n+1}(\sigma)e_{n+1}$ be the 
highest weight
of $\sigma$ as in \eqref{Darstellungen von M}. Then the highest weight 
$\Lambda(w_0\sigma)$ of $w_0\sigma$ is given by
\begin{equation}\label{wsigma}
\Lambda(w_0\sigma)=k_{2}(\sigma)e_{2}+\dots+k_{n}(\sigma)e_{n}
-k_{n+1}(\sigma)e_{n+1}.
\end{equation}

Let $P:=NAM$. We equip $\aL$ with the norm induced from the restriction of the
normalized Killing form on $\gL$. Let $H_1\in\aL$ be
the unique vector which is of norm one and such that the positive restricted
root,
implicit in the choice of $N$, is positive on $H_1$. Let
$\exp:\aL\to
A$ be
the exponential map.
Every $a\in A$ can be written 
as $a=\exp{\log{a}}$, where $\log{a}\in\mathfrak{a}$ is unique.
For $t\in\mathbb{R}$, we let 
$a(t):=\exp{(tH_{1})}$. If $g\in G$, we define $n(g)\in N$, $
H(g)\in \R$ and $\kappa(g)\in K$ by 
\begin{align*}
g=n(g)a(H(g))\kappa(g). 
\end{align*}

Now let $P'$ be any parabolic subgroup. Then 
there exists a $k_{P'}\in K$ such that
$P'=N_{P'}A_{P'}M_{P'}$ with $N_{P'}=k_{P'}Nk_{P'}^{-1}$,
$A_{P'}=k_{P'}Ak_{P'}^{-1}$, $M_{P'}=k_{P'}Mk_{P'}^{-1}$. We choose a set of
$k_{P'}$'s, which will be fixed from now on. Let
$k_{P}=1$.
We let $a_{P'}(t):=k_{P'}a(t)k_{P'}^{-1}$. If $g\in G$, we define
$n_{P'}(g)\in N_{P'}$, $H_{P'}(g)\in
\mathbb{R}$ and $\kappa_{P'}(g)\in K$ by 
\begin{align}\label{eqg}
g=n_{P'}(g)a_{P'}(H_{P'}(g))\kappa_{P'}(g) 
\end{align}
and we define an identification $\iota_{P'}$ of
$\left(0,\infty\right)$ with
$A_{P'}$ by $\iota_{P'}(t):=a_{P'}(\log(t))$. 
For $Y>0$, let $A^{0}_{P'}\left[Y\right]:=\iota_{P'}(Y,\infty)$ and
$A_{P'}\left[Y\right]:=\iota_{P'}[Y,\infty)$. For $g\in G$ as in \eqref{eqg} we
let $y_{P'}(g):=e^{H_{P'}(g)}$.

\smallskip
Let $\Gamma$ be a discrete subgroup of $G$ such that
$\vol(\Gamma\backslash G)<\infty$. We do not assume at the moment 
that $\Gamma$ is torsion-free. Let 
$X:=\Gamma\backslash\widetilde{X}$. Let $\pr_X:G\to X$ be the projection. 
A parabolic subgroup $P'$ of $G$ is called a $\Gamma$-cuspidal parabolic
subgroup if $\Gamma\cap N_{P'}$ is a lattice in $N_{P'}$. 
Let $\mathfrak{P}_\Gamma=\{P_1,\dots,P_{\kappa(\Gamma)}\}$ be a set of 
representatives of $\Gamma$-conjugacy-classes of $\Gamma$-cuspidal parabolic 
subgroups of $G$. Then for each $P'\in\mathfrak{P}_{\Gamma}$ one has 
\begin{align}\label{eqPara}
\Gamma\cap P'=\Gamma\cap(M_{P'}N_{P'}). 
\end{align}
The number
\begin{equation}\label{cusps}
\kappa(X):=\kappa(\Gamma)=\#\mathfrak{P}_{\Gamma}
\end{equation}
is finite and equals the number of cusps of $X$. More precisely, for each
$P_i\in\mathfrak{P}_\Gamma$ there exists a $Y_{P_i}>0$ and there exists 
a compact connected subset $C=C(Y_{P_1},\dots,Y_{P_{\kappa(\Gamma)}})$
of $G$  such that in the sense of a disjoint union one has
\begin{align}\label{FBI}
G=\Gamma\cdot
C\sqcup\bigsqcup_{i=1}^{
\kappa(X)}\Gamma\cdot
N_{P_i}A^{0}_{P_i}\left[Y_{P_i}\right]K
\end{align}
and such that 
\begin{align}\label{FBII}
\gamma\cdot N_{P_i}A^0_{P_i}\left[Y_{P_i}\right]K\cap
N_{P_i}A_{P_i}^{0}\left[Y_{P_i}\right]K\neq
\emptyset\Leftrightarrow\gamma\in \Gamma\cap P_i. 
\end{align}
For each $P_i\in\mathfrak{P}_\Gamma$ let 
\begin{equation}\label{infimum}
Y^0_{P_i}(\Gamma)=\inf\{Y_{P_i}\colon  Y_{P_i}\in\R^+\;\;\textup{satisfies}\;\;
\eqref{FBII}\}.
\end{equation}
Moreover, we define the height-function $y_{\Gamma,P_i}$ on $X$ by 
\begin{equation}\label{heightf}
y_{\Gamma,P_i}(x):=\sup\{y_{P_i}(g)\colon g\in G,\; \pr_X(g)=x\}. 
\end{equation}
By \eqref{FBI} and \eqref{FBII} the supremum is finite.
For $Y_1,\dots,Y_{\kappa(X)}\in (0,\infty)$ we let 
\begin{equation}\label{truncmfd}
X(P_1,\dots,P_{\kappa(X)};Y_1,\dots,Y_{\kappa(X)}):=\{x\in X\colon
y_{\Gamma, P_i}(x)\leq Y_i,\:i=1,\dots,\kappa(X)\}.
\end{equation}
If $Y\in (0,\infty)$, we write $X_{\mathfrak{P}_\Gamma}(Y)$ or
$X(P_1,\dots,P_{\kappa(X)};Y)$ for
$X(P_1,\dots,P_{\kappa(X)};Y,\dots,Y)$, i.e. 
\begin{align}\label{XY}
X_{\mathfrak{P}_\Gamma}(Y):=X(P_1,\dots,P_{\kappa(X)};Y) :=\{x\in X\colon
y_{\Gamma,P_i}(x)\leq
Y,\:i=1,\dots,\kappa(X)\}.
\end{align}

For later purposes we now recall the interpretation of the semisimple elements
in terms of closed geodesics. For further details we refer, for example, to 
\cite[section 3]{Pf}.
We let $\Gamma_{\s}$ denote the semisimple elements of $\Gamma$ which are not
$G$-conjugate 
to an element of $K$. By $\CC(\Gamma)_{\s}$ we denote the set of 
$\Gamma$-conjugacy classes of elements of $\Gamma_{\s}$. Then for each
$\gamma\in\Gamma_{\s}$ 
there exists a unique geodesic
$\widetilde{c}_{\gamma}$ in 
$\widetilde{X}$ which is stabilized by $\gamma$. If one lets
\begin{align}\label{Ellgamma}
\ell(\gamma)=\inf_{x\in\widetilde{X}}d(x,\gamma x),
\end{align}
then $\ell(\gamma)>0$ and the 
infimum is attained exactly by the points 
in $\widetilde{X}$ lying on $\widetilde{c}_\gamma$. Let $\mathcal{C}(X)$ denote 
the set of closed geodesics of $X$. For $\gamma\in \Gamma_{\s}$ let
$c_\gamma$ be the projection of the segment of $\widetilde{c}_\gamma$ from $x_0$
to $\gamma x_0$, $x_0$ a point on $\widetilde{c}_\gamma$, to $X$.
Then one can show that $c_{\gamma}$ depends only on the 
$\Gamma$-conjugacy class of $\gamma$ and that the assignment $\gamma\mapsto
c_{\gamma}$ induces 
a bijection between $\CC(\Gamma)_{\s}$ and
$\mathcal{C}(X)$. For $c\in\mathcal{C}(X)$ let $\ell(c)$ denote its length.
Then 
there exists a constant $C_X$ such that for each $R$ one can estimate
\begin{align}\label{Estgeod}
\#\{c\in\mathcal{C}(X)\colon \ell (c)\leq R\}\leq C_Xe^{2nR}.
\end{align}
In particular, if one sets
\begin{align}\label{shortleng}
\ell(\Gamma):=\ell(X):=\inf\{\ell(c)\colon c\in\mathcal{C}(X)\},
\end{align}
then $\ell(\Gamma)>0$.

Measures are normalized as follows.
We normalize the Haar-measure on $K$ such that $K$ has volume 1. 
We fix an isometric identification of  $\mathbb{R}^{2n}$ with
$\mathfrak{n}$ 
with respect to the inner product $\left<\cdot,\cdot\right>_{\theta}$. We give 
$\mathfrak{n}$ the measure, induced from the Lebesgue measure under this 
identification. Moreover, we identify $\mathfrak{n}$ and $N$ by the 
exponential map and we will denote by $dn$ the Haar measure on $N$,
induced from the measure on $\mathfrak{n}$ under this identification. We 
normalize the Haar measure on $G$ by setting
\begin{align}\label{Haarmass auf G}
\int_{G}{f(g)dg}
&=\int_{N}\int_{\mathbb{R}}\int_{K}{e^{-2nt}f(na(t)k)dkdtdn}.
\end{align}

If $P'$ is a parabolic subgroup of $G$, the measures on $N_{P'}$ and $A_{P'}$
will be the measures induced from $N$ and $A$
via the 
conjugation with $k_{P'}$. Let $f$ be integrable over $\Gamma\backslash G$. Then
identifying
$f$ with a measurable function on $G$ it follows from \eqref{Haarmass auf G},
\eqref{FBI} and \eqref{FBII} that for every $Y\geq
Y_0$ one has
\begin{align}\label{intQuot}
\int_{\Gamma\backslash
G}f(x)dx=\int_{C(Y)}f(g)dg+\sum_{i=1}^{\kappa(\Gamma)}\int_{\Gamma\cap
N_{P_i}\backslash N_{P_i}}\int_{\log Y}^\infty\int_K
e^{-2nt}f(n_{P_i}a_{P_i}(t)k)dn_{P_i}dtdk
\end{align}

For $\sigma\in\hat{M}$ and $\lambda\in\C$ let $\pi_{\sigma,\lambda}$ be the 
principal series representation of $G$ parametrized as in \cite[section
2.7]{MP2}. In 
particular, the representations $\pi_{\sigma,\lambda}$ are unitary iff
$\lambda\in\R$. We denote by $\Theta_{\sigma,\lambda}$ the global character of
$\pi_{\sigma,\lambda}$. 
For $\sigma\in\hat{M}$ with highest weight $\Lambda(\sigma)$ as in
\eqref{Darstellungen von M} let $\sigma(\Omega_M)$ 
be the Casimir eigenvalue of $\sigma$ and let
\begin{align}\label{csigma}
c(\sigma):=\sigma(\Omega_M)-n^2=\sum_{j=2}^{n+1}(k_j(\sigma)+\rho_j)^2-
\sum_{j=1}^{n+1}\rho_j^2,
\end{align}
where the second equality follows from a standard computation. 

\section{Eisenstein series}\label{seceis}
\setcounter{equation}{0}
In this section we recall the definition and some basic properties of the
Eisenstein series. Let $\Gamma$ be a discrete subgroup of $G$ such that
$\vol(\Gamma\backslash G)$ is 
finite. Furthermore, for convenience we assume in this section that $\Gamma$ is
torsion-free and that 
for each $\Gamma$-cuspidal parabolic subgroup $P'$ of $G$ one has 
\begin{align}\label{asGamma}
\Gamma\cap P'=\Gamma\cap N_{P'}.
\end{align}
Let $\mathfrak{P}_\Gamma$ be a fixed set of representatives of
$\Gamma$-conjugacy
classes of $\Gamma$-cuspidal parabolic 
subgroups of $G$. Let $P_i\in\mathfrak{P}_\Gamma$. For $\sigma\in\hat{M}$ we
define a representation 
$\sigma_{P_i}$ of $M_{P_i}$ by
\begin{equation}\label{repsigma}
\sigma_{P_i}(m_{P_i}):=\sigma(k_{P_i}^{-1}m_{P_i}k_{P_i}),\quad m_{P_i}\in
M_{P_i}.
\end{equation} 
Now let $\nu\in\hat{K}$ and $\sigma_{P}\in\hat{M}$ such that
$\left[\nu:\sigma\right]\neq 0$. 
Then we let $\mathcal{E}_{P_i}(\sigma,\nu)$ be the set of all
continuous functions $\Phi$ on $G$ which are left-invariant under
$N_{P_i}A_{P_i}$ such
that for all
$x\in G$ the function
$m\mapsto \Phi_{P_i}(mx)$ belongs to $L^2(M_{P_i},\sigma_{P_i})$, the
$\sigma_{P_i}$-isotypical component of the
right regular representation of $M_{P_i}$, and such that 
for all $x\in G$ the function $k\mapsto \Phi_{P_i}(xk)$ belongs to the
$\nu$-isotypical component of the right regular representation of $K$. 
The space $\mathcal{E}_{P_i}(\sigma,\nu)$ is finite dimensional and 
in fact one has
\begin{align}\label{dim1}
\dim(\mathcal{E}_{P_i}(\sigma,\nu))=\dim(\sigma)\dim(\nu).
\end{align}

We define an inner product on $\mathcal{E}_{P_i}(\sigma,\nu)$ as follows.
Any element of $\mathcal{E}_{P_i}(\sigma,\nu)$ can be identified
canonically with a function on $K$. For
$\Phi,\Psi\in\mathcal{E}_{P_i}(\sigma,\nu)$ put
\begin{align}\label{InprodaufE}
\left<\Phi,\Psi\right>:=\vol(\Gamma\cap N_{P_i}\backslash
N_{P_i})\int_K\Phi(k)\bar{\Psi}(k)dk.
\end{align}
Define the Hilbert space $\mathcal{E}_{P_i}(\sigma)$ by
\begin{align*}
\mathcal{E}_{P_i}(\sigma):=\bigoplus_{\substack{\nu\in\hat{K}\\\left[
\nu:\sigma\right]\neq 0}}\mathcal{E}_{P_i}(\sigma,\nu).
\end{align*}

For $\Phi_{P_i}\in\mathcal{E}_{P_i}(\sigma,\nu)$ and $\lambda\in\C$ let
\begin{align}\label{PhiP}
\Phi_{P_i,\lambda}(g):=e^{(\lambda+n)(H_{P_i}(x))}\Phi_{P_i}(g).
\end{align}
Let $x\in\Gamma\backslash G$, $x=\Gamma g$. Then the Eisenstein series
$E(\Phi_{P_i}:\lambda:x)$ is defined by 
\begin{align}\label{Def ER}
E(\Phi_{P_i}:\lambda:x):=\sum_{\gamma\in(\Gamma\cap N_{P_i})\backslash\Gamma}
\Phi_{P_i,\lambda}(\gamma g).
\end{align}
On $\Gamma\backslash G\times\{\lambda\in\mathfrak\C\colon
\Real(\lambda)>n\}$ the series \eqref{Def ER} is absolutely and locally
uniformly
convergent. As a function of $\lambda$, it has a meromorphic continuation to
$\C$ with only finitely many poles in the strip $0<\Real(\lambda)\leq n$
which are located on $(0,n]$ and it has no poles on the line
$\Real(\lambda)=0$. By \eqref{Casimir}, for $\sigma\in\hat{M}$ with
$[\nu:\sigma]\neq 0$ and
$\Phi_{P_i}\in\mathcal{E}(\sigma,\nu)$ one has 
\begin{align}\label{DeltaPhi}
\Omega\Phi_{P_i,\lambda}= (\lambda^2+c(\sigma))\Phi_{P_i,\lambda},
\end{align}
where $c(\sigma)$ is as in \eqref{csigma}. Since $\Omega$ is $G$-invariant it
follows that 
\begin{align}\label{EqE}
\Omega E(\Phi_{P_i}:\lambda:x)=(\lambda^2+c(\sigma))E(\Phi_{P_i}:\lambda:x). 
\end{align}
Let 
\[
\boldsymbol{\mathcal{E}}(\sigma,\nu):=\bigoplus_{P_i\in\mathfrak{P}_\Gamma
}\mathcal{E}_{P_i}(\sigma,\nu);\quad
\boldsymbol{\mathcal{E}}(\sigma):=\bigoplus_{P_i\in\mathfrak{P}_\Gamma
}\mathcal{E}_{P_i}(\sigma).
\]
By \eqref{dim1} one has
\begin{align}\label{dim2}
\dim\boldsymbol{\mathcal{E}}(\sigma,\nu
)=\kappa(\Gamma)\dim(\sigma)\dim(\nu).
\end{align}

Let $P_i,P_j\in\mathfrak{P}_\Gamma$ and let $\sigma\in\hat{M}$. 
For $\Phi_{P_i}\in\mathcal{E}_{P_i}(\sigma,\nu)$, $i=1,2$, and $g\in G$ let
\begin{align*}
E_{P_j}(\Phi_{P_i}:g:\lambda):=\frac{1}{\vol\left(\Gamma\cap N_{P_j}\backslash
N_{P_j}\right)}\int_{\Gamma\cap N_{P_j}\backslash
N_{P_j}}{E(\Phi_{P_i}:ng:\lambda)dn}
\end{align*}
be the constant term of $E(\Phi_{P_i}:-:\lambda)$ along $P_j$. 
Then there exists a meromorphic function
\begin{align*}
C_{P_i|P_j}(\sigma:\nu:\lambda):\mathcal{E}_{P_i}(\sigma,\nu
)\longrightarrow\mathcal{E}_{P_j}(w_0\sigma,\nu),
\end{align*}
such that for
$P_i,P_j\in\mathfrak{P}_\Gamma$ one has
\begin{align}\label{C1}
E_{P_j}(\Phi_{P_i}:g:\lambda)=\delta_{i,j}\Phi_{P_i,\lambda}(g)+(C_{P_i|P_j}
(\sigma:\nu:\lambda)\Phi_{P_i})_{
-\lambda}(g).
\end{align}
Now we let
\begin{align*}
C_{P_i|P_j}(\sigma_{P_i},\lambda):=\bigoplus_{\substack{\nu\in\hat{K}\\
[\nu:\sigma]\neq 0}}C_{P_i|P_j}(\sigma,\nu,\lambda),
\end{align*}
where $\sigma_{P_i}$ is defined by \eqref{repsigma}. 
Furthermore, let 
\begin{align*}
\mathbf{C}(\sigma,\lambda):\boldsymbol{\mathcal{E}}(\sigma
) \to\boldsymbol{\mathcal{E}}(w_0\sigma
);\quad
\mathbf{C}(\sigma,\nu,\lambda):\boldsymbol{\mathcal{E}}(\sigma,\nu)
\to\boldsymbol{\mathcal{E}}(w_0\sigma,\nu)
\end{align*}
be the maps built from the maps $C_{P_i|P_j}(\sigma,\lambda)$, resp. 
$C_{P_i|P_j}(\sigma,\nu,\lambda)$. Then one has
\begin{align}\label{FG}
\mathbf{C}(w_0\sigma,\lambda)\mathbf{C}(\sigma,-\lambda)=\Id;\quad\mathbf{C}
(\sigma,\lambda)^{*}=\mathbf{C}
(w_0\sigma,\bar{\lambda}).
\end{align}

Let $\sigma\in\hat{M}$ and $\nu\in\hat{K}$. If  
$\sigma=w_0\sigma$, let
$\overline{\mathcal{E}}_{P_i}(\sigma,\nu):=\mathcal{E}_{P_i}(\sigma,
\nu)$, 
$\boldsymbol{\overline{\mathcal{E}}}(\sigma,\nu):=\boldsymbol{
\mathcal{E}}(\sigma,\nu)$,
$\mathbf{\overline{C}}(\sigma:\nu:s):=\mathbf{C}(\sigma:\nu:s)$. If 
$\sigma\neq w_0\sigma$, let
$\overline{\mathcal{E}}_{P_i}(\sigma,\nu):=\mathcal{E}_{P_i}(\sigma,
\nu)\oplus \mathcal{E}_{P_i}(w_0\sigma,\nu)$
$\boldsymbol{\overline{\mathcal{E}}}(\sigma,\nu):=\boldsymbol{
\mathcal{E}} (\sigma,\nu)\oplus
\boldsymbol{\mathcal{E}}(\boldsymbol{w_0\sigma},\nu)$
and
\begin{equation}\label{intbar}
\mathbf{\overline{C}}(\sigma,\nu,s):\
\:\boldsymbol{\overline{\mathcal{E}}}
(\sigma,\nu)\rightarrow
\boldsymbol{\overline{\mathcal{E}}}(\sigma,\nu);\quad
{\mathbf{\overline{C}}}(\sigma,\nu,s):=\begin{pmatrix}0&\mathbf{C}
(w_0\sigma,\nu,s)\\ \mathbf{C}(
\sigma,\nu,s)&0 \end{pmatrix}.
\end{equation}

Let $R_\sigma$ (resp. $R_{w_0\sigma}$) denote the right regular
representation of  $K$ on $\boldsymbol{\mathcal{E}}(\sigma)$ (resp.
$\boldsymbol{\mathcal{E}}(w_0\sigma)$). Then 
$\mathbf{C}(\sigma,s)$ is an intertwining operator between $R_\sigma$ and
$R_{w_0\sigma}$. Thus if $\nu$ 
is a finite-dimensional representation of $K$ on $V_\nu$, we can define  
$\widetilde{\boldsymbol{C}}(\sigma,\nu,s)$ as the restriction of
$(\boldsymbol{C}(\sigma,s)\otimes\Id)$ to 
a map from $(\boldsymbol{\mathcal{E}}
(\sigma)\otimes V_\nu)^K$ to $(\boldsymbol{\mathcal{E}}(w_0\sigma)\otimes
V_\nu)^K$.
For later purpose we need the following Lemma.

\begin{lem}\label{LemC}
In the sense of meromorphic functions one has 
\begin{align*}
\Tr\left(\widetilde{\boldsymbol{C}}
(\sigma,\nu,s)^{-1}\frac{
d}{ds}\widetilde{\boldsymbol{C}}(\sigma,\nu,
s)\right)
=\frac{1}{\dim(\nu)}
\Tr\left(\boldsymbol{C}
(\sigma,\nu,s)^{-1}\frac{
d}{ds}\boldsymbol{C}(\sigma,\nu,s)\right)
\end{align*}
for each $\sigma\in\hat{M}$, $\nu\in\hat{K}$ with $[\nu:\sigma]\neq 0$. 
\end{lem}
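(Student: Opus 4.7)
The plan is to apply Schur's lemma twice, reducing both sides of the claimed identity to the same trace on a common multiplicity space.

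First, since the right regular action of $K$ makes $\boldsymbol{\mathcal{E}}(\sigma,\nu)$ a $\nu$-isotypic $K$-module, Schur's lemma furnishes a canonical $K$-equivariant isomorphism
\[
\boldsymbol{\mathcal{E}}(\sigma,\nu)\cong V_\nu\otimes H_+,\qquad H_+:=\Hom_K(V_\nu,\boldsymbol{\mathcal{E}}(\sigma,\nu)),
\]
with $K$ acting trivially on the multiplicity space $H_+$, and analogously $\boldsymbol{\mathcal{E}}(w_0\sigma,\nu)\cong V_\nu\otimes H_-$. Since $\boldsymbol{C}(\sigma,s)$ is $K$-equivariant, its $\nu$-component $\boldsymbol{C}(\sigma,\nu,s)$ is forced to take the form $\Id_{V_\nu}\otimes A(s)$ for a unique meromorphic family of linear maps $A(s)\colon H_+\to H_-$, so that
\[
\Tr\bigl(\boldsymbol{C}(\sigma,\nu,s)^{-1}\partial_s\boldsymbol{C}(\sigma,\nu,s)\bigr) = \dim(\nu)\cdot\Tr\bigl(A(s)^{-1}A'(s)\bigr).
\]

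Next I would analyse $(\boldsymbol{\mathcal{E}}(\sigma)\otimes V_\nu)^K$. Decomposing $\boldsymbol{\mathcal{E}}(\sigma)=\bigoplus_{\nu'\in\hat K}\boldsymbol{\mathcal{E}}(\sigma,\nu')$ and applying the Schur decomposition to each isotypic piece gives
\[
(\boldsymbol{\mathcal{E}}(\sigma)\otimes V_\nu)^K = \bigoplus_{\nu'\in\hat K}(V_{\nu'}\otimes V_\nu)^K\otimes H_{+,\nu'}.
\]
Since $K$ is of type $B_n$, every irreducible representation of $K$ is self-dual, so $(V_{\nu'}\otimes V_\nu)^K$ is one-dimensional when $\nu'\cong\nu$ and vanishes otherwise. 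Fixing a nonzero element $\omega\in(V_\nu\otimes V_\nu)^K$ thereby produces a canonical identification $(\boldsymbol{\mathcal{E}}(\sigma)\otimes V_\nu)^K\cong H_+$, under which the restriction of $\boldsymbol{C}(\sigma,s)\otimes\Id_{V_\nu}$ to the invariants reduces to $A(s)$ itself. Consequently
\[
\Tr\bigl(\widetilde{\boldsymbol{C}}(\sigma,\nu,s)^{-1}\partial_s\widetilde{\boldsymbol{C}}(\sigma,\nu,s)\bigr) = \Tr\bigl(A(s)^{-1}A'(s)\bigr),
\]
which combined with the previous display yields the claimed identity.

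The main subtlety is the final identification: one must check that $\boldsymbol{C}(\sigma,s)\otimes\Id_{V_\nu}$ restricted to the one-dimensional line $\C\omega\subset V_\nu\otimes V_\nu$ acts as $A(s)$ without an extra scalar factor. This ultimately follows because $\Id_{V_\nu}\otimes\Id_{V_\nu}$ acts as the identity on $(V_\nu\otimes V_\nu)^K$, so the two $V_\nu$-factors in $\Id_{V_\nu}\otimes A(s)\otimes\Id_{V_\nu}$ strip off cleanly upon passing to the invariants. The argument is uniform in whether or not $w_0\sigma\cong\sigma$.
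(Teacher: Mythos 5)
Your argument is correct, and it reaches the identity by a more structural route than the paper's. The paper works directly with the two projections $P_{1}=\dim(\nu)\int_{K}\chi_\nu(k)R_\sigma(k)\,dk$ onto $\boldsymbol{\mathcal{E}}(\sigma,\nu)$ and $P_{2}=\int_{K}R_\sigma(k)\otimes\nu(k)\,dk$ onto $(\boldsymbol{\mathcal{E}}(\sigma)\otimes V_\nu)^K$: it writes $\Tr(\widetilde{\boldsymbol{C}}^{-1}\widetilde{\boldsymbol{C}}')$ as $\Tr\bigl((\boldsymbol{C}^{-1}\boldsymbol{C}'\otimes\Id)\circ P_2\bigr)$, takes the partial trace over $V_\nu$ to turn $\nu(k)$ into $\chi_\nu(k)$, and recognizes $\frac{1}{\dim\nu}P_1$, all in one chain of equalities. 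You instead decompose the isotypic component as $V_\nu\otimes H_+$ via Schur's lemma, write $\boldsymbol{C}(\sigma,\nu,s)=\Id_{V_\nu}\otimes A(s)$, and identify $(\boldsymbol{\mathcal{E}}(\sigma)\otimes V_\nu)^K$ with the multiplicity space $H_+$ through the one-dimensional line $(V_\nu\otimes V_\nu)^K$, so that both sides become explicit multiples ($\dim\nu$ and $1$) of $\Tr(A^{-1}A')$. Both arguments hinge on the same character-theoretic fact and both need the self-duality $\check\nu\cong\nu$ (the paper invokes it to write $P_1$ with $\chi_\nu$ rather than $\overline{\chi_\nu}$; you need it so that only $\nu'\cong\nu$ survives in $(V_{\nu'}\otimes V_\nu)^K$), which does hold here since $K=\SO(2n+1)$ or $\Spin(2n+1)$ is of type $B_n$. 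What your version buys is transparency — it exhibits $\widetilde{\boldsymbol{C}}$ concretely as the multiplicity-space operator $A(s)$, and your closing remark correctly disposes of the only real pitfall (a possible stray scalar in the identification of the invariant line); what the paper's version buys is brevity and the avoidance of any choice of invariant vector $\omega$.
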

\begin{proof}
Let $P_1$ be the projection form $\boldsymbol{\mathcal{E}}(\sigma)$ to
$\boldsymbol{\mathcal{E}}(\sigma,\nu)$
and let $P_2$ be the projection from $(\boldsymbol{\mathcal{E}}
(\sigma)\otimes V_\nu)$ to $(\boldsymbol{\mathcal{E}}
(\sigma)\otimes V_\nu)^K$. Then using that $\check{\nu}\cong\nu$ we have
\begin{align*}
P_{1}=\dim(\nu)\int_K\chi_\nu(k)R_\sigma(k);\quad
P_{2}=\int_KR_\sigma(k)\otimes\nu(k)dk,
\end{align*}
where $\chi_\nu$ is the character of $\nu$. 
Thus one has 
\begin{align*}
&\Tr\left(\widetilde{\boldsymbol{C}}
(\sigma,\nu,s)^{-1}\frac{
d}{ds}\widetilde{\boldsymbol{C}}(\sigma,\nu,
s)\right)=\Tr\left(
\boldsymbol{C}(\sigma,s)^{-1}\frac{d}{ds}\boldsymbol{C}(\sigma,s)\otimes\Id\circ
P_2\right)\\
=&\Tr\left(
\int_K\boldsymbol{C}(\sigma,s)^{-1}\frac{d}{ds}\boldsymbol{C}(\sigma,s)\circ
R_\sigma(k)\otimes\nu(k)dk\right)\\
=&\Tr\left(\int_K
\boldsymbol{C}(\sigma,s)^{-1}\frac{d}{ds}\boldsymbol{C}(\sigma,s)\circ
\chi_\nu(k)R_\sigma(k)dk\right)\\
=&\frac{1}{\dim(\nu)}\Tr\left(
\boldsymbol{C}(\sigma,s)^{-1}\frac{d}{ds}\boldsymbol{C}(\sigma,s)\circ
P_1 \right)=\frac{1}{\dim(\nu)}
\Tr\left(\boldsymbol{C}
(\sigma,\nu,s)^{-1}\frac{
d}{ds}\boldsymbol{C}(\sigma,\nu,s)\right),
\end{align*}
which concludes the proof of the proposition.
\end{proof}

\section{Factorization of the C-matrix}\label{secfact}
\setcounter{equation}{0}
We let $\Gamma$ be a discrete subgroup of $G$ satisfying \eqref{asGamma} and we
keep the
notations of the previous section. By the results of M\"uller, in particular
\cite[equation (6.8)]{Mu1}, the
determinant of the matrix ${\mathbf{\overline{C}}}(\sigma,\nu,\lambda)$ 
factorizes into a product of an exponential factor and an infinite Weierstrass
product involving its zeroes and poles. For the case of 
a hyperbolic surface, this factorization was first established by 
Selberg  (see\cite[page 656]{Se}). 

While the poles and zeroes of the $C$-matrices are 
easily seen to be independent of the choice of $\mathfrak{P}_\Gamma$, the
exponential factor 
depends on $\mathfrak{P}_\Gamma$ or, equivalently, on the choice of truncation
parameters. This fact will become particularly crucial if one lets the manifold
$X$ vary. 
In \cite{Mu1}, the manifold $X$ and the set $\mathfrak{P}_\Gamma$ were fixed.
Therefore, for the purposes of the present article we have to go through 
the arguments of the paper \cite{Mu1} which led to
equation (6.8) in this paper and to keep track of the precise choices of
truncation parameters.

Let $R_\Gamma$ be the right regular representation 
of $G$ on $L^2(\Gamma\backslash G)$. If $\nu$ is a finite dimensional
representation of $K$, let 
$L^2(\Gamma\backslash G)_\nu$ denote the $\nu$-isotypical component of the
restriction of $R_\Gamma$ 
to $K$. Let $C_c^\infty(\Gamma\backslash G)_\nu:=C_c^\infty(\Gamma\backslash
G)\cap L^2(\Gamma\backslash G)_\nu$. Then 
it is easy to see that $C_c^\infty(\Gamma\backslash G)_\nu$ is dense in
$L^2(\Gamma\backslash G)_\nu$. 

Now let $\Delta_\nu$ be the differential operator in 
$C^\infty(\Gamma\backslash G)_\nu$, which is induced by $-R_\Gamma(\Omega)$.
If we regard it as an operator in $L^2(\Gamma\backslash G)_\nu$ with domain
$C_c^\infty(\Gamma\backslash G)_\nu$, it is symmetric, essentially
selfadjoint and satisfies $\Delta_\nu\geq -\nu(\Omega_K)$, where
$\nu(\Omega_K)\in\R^+$ is the Casimir eigenvalue of $\nu$. This follows 
easily from the considerations in the next section \ref{secbol}. The closure of
$\Delta_\nu$ will be denoted by $\overline{\Delta}_\nu$. One has
\begin{align}\label{Spec}
\sigma(\overline{\Delta_\nu})\subset (-\nu(\Omega_K),\infty).
\end{align}

We fix a smooth function $\phi$ on $\R$ with values in $[0,1]$ such that 
$\phi(t)=0$ for $t\leq 0$ and $\phi(t)=1$ for $t\geq 1$. 
If $P_i\in\mathfrak{P}_\Gamma$, then for $Y\in (0,\infty)$ we let
\[
\psi_{P_i,Y}(n_{P_i}a_{P_i}(t)k):=\phi(t-\log{Y}),\quad
n_{P_i}\in N_{P_i},\: t\in \R.
\]

Now let $Y_{P_i}\in (0,\infty)$, $i=1,\dots,\kappa(\Gamma)$, such that 
$Y_{P_i}\geq Y_{P_i}^0(\Gamma)$, where $Y_{P_i}^0(\Gamma)$ is defined by
\eqref{infimum}. For 
$\Phi_{P_i}\in\mathcal{E}(\sigma_{P_i},\nu)$ 
we define a function $\theta(\Phi_{P_i}:Y_{P_i}:\lambda:x)$ on $\Gamma\backslash
G$ by 
\begin{align}\label{deftheta}
\theta(\Phi_{P_i}:Y_{P_i}:\lambda:x):=\sum_{\gamma\in\Gamma\cap
N_{P_i}\backslash\Gamma}
\psi_{ P_i,
Y_{P_i}} (\gamma
g)\Phi_{P_i,\lambda}(\gamma g); \quad x=\Gamma g.
\end{align}
By \eqref{FBII} at most one summand in this sum can be non-zero . We let
\[
H(\Phi_{P_i}:Y_{P_i}:\lambda:x):=(\Delta_\nu+c(\sigma_{P_i}
)+\lambda^2)\theta(\Phi_{P_i}:Y_{P_i}:\lambda:x).
\]
Then by \eqref{DeltaPhi} one has 
$H(\Phi_{P_i}:Y_{P_i}:\lambda:x)\in
C_c^\infty(\Gamma\backslash G)_\nu$. Moreover, the
Eisenstein series can be characterized by the following Proposition, which for 
$\dim X=2$ is due to Colin de Verdi\`{e}re \cite{CV}.

\begin{prop}\label{PropER}
For $P_i\in\mathfrak{P}_\Gamma$, $Y_{P_i}\geq Y_{P_i}^0(\Gamma)$ and
$\lambda\in\C$ with 
$\lambda^2+c(\sigma)\notin (-\infty,\nu(\Omega_K))$ and $\Real(\lambda)>0$ one 
has
\[
E(\Phi_{P_i}:\lambda:x)=\theta(\Phi_{P_i}:Y_{P_i}:\lambda:x)-(\overline{\Delta}
_\nu+\lambda^2+c(\sigma))^{-1}(H(\Phi_{P_i}:Y_{P_i}:\lambda:x)).
\]
\end{prop}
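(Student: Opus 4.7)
The plan is to define
$$
\Psi(x):=\theta(\Phi_{P_i}:Y_{P_i}:\lambda:x)-(\overline{\Delta}_\nu+\lambda^2+c(\sigma))^{-1}H(\Phi_{P_i}:Y_{P_i}:\lambda:x)
$$
and prove that $\Psi=E(\Phi_{P_i}:\lambda:\,\cdot\,)$. First one must verify that $\Psi$ makes sense. On the set where $\psi_{P_i,Y_{P_i}}=1$ each surviving summand in \eqref{deftheta} agrees with $\Phi_{P_i,\lambda}$ (translated), which is annihilated by $\Delta_\nu+\lambda^2+c(\sigma)$ thanks to \eqref{DeltaPhi}; on the set where $\psi_{P_i,Y_{P_i}}=0$ the summand is zero; the complement is a relatively compact transition region in $\Gamma\backslash G$. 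Therefore $H(\Phi_{P_i}:Y_{P_i}:\lambda:\,\cdot\,)\in C_c^\infty(\Gamma\backslash G)_\nu$. Moreover, the hypothesis $\lambda^2+c(\sigma)\notin(-\infty,\nu(\Omega_K))$ together with \eqref{Spec} puts $-(\lambda^2+c(\sigma))$ in the resolvent set of $\overline{\Delta}_\nu$, so the resolvent is bounded and $\Psi$ is well-defined; elliptic regularity for the Casimir makes $\Psi$ smooth.

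Next I would show that $F:=E(\Phi_{P_i}:\lambda:\,\cdot\,)-\Psi$ satisfies $(\Delta_\nu+\lambda^2+c(\sigma))F=0$. For $E$ this is \eqref{EqE} after substituting $\Delta_\nu=-R_\Gamma(\Omega)$; for $\Psi$ it follows because $(\Delta_\nu+\lambda^2+c(\sigma))\theta=H$ by construction and applying the operator to the resolvent term returns $H$, so the two contributions cancel.

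The heart of the argument is to prove $F\in L^2(\Gamma\backslash G)_\nu$. I would compute constant terms along each $P_j\in\mathfrak{P}_\Gamma$. Using \eqref{FBII} and the left $N_{P_i}$-invariance of $\Phi_{P_i,\lambda}$, for $y_{P_j}(g)$ beyond the cutoff only the $\gamma=1$ term in \eqref{deftheta} survives when $j=i$, and no term survives when $j\neq i$; hence the constant term of $\theta$ at $P_j$ equals $\delta_{ij}\Phi_{P_i,\lambda}$ deep in the cusp. Combined with \eqref{C1} this shows that the constant term of $E-\theta$ at $P_j$, sufficiently deep in the cusp, is $(C_{P_i|P_j}(\sigma:\nu:\lambda)\Phi_{P_i})_{-\lambda}$, whose modulus behaves like $e^{(n-\Real\lambda)t}$. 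With the Haar measure $e^{-2nt}\,dt$ in \eqref{intQuot} this is square-integrable at infinity precisely because $\Real\lambda>0$. The Eisenstein series minus its constant terms is rapidly decreasing in each cusp by standard Langlands theory, and the resolvent term is $L^2$ by construction. Summing over $j$ gives $F\in L^2(\Gamma\backslash G)_\nu$.

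To conclude, $F$ is smooth and both $F$ and $\Delta_\nu F=-(\lambda^2+c(\sigma))F$ lie in $L^2$; a standard smooth cutoff/exhaustion argument using essential self-adjointness of $\Delta_\nu$ on $C_c^\infty(\Gamma\backslash G)_\nu$ places $F$ in the domain of $\overline{\Delta}_\nu$. Since $-(\lambda^2+c(\sigma))$ is in the resolvent set of $\overline{\Delta}_\nu$, the operator $\overline{\Delta}_\nu+\lambda^2+c(\sigma)$ is injective on its domain, which forces $F=0$. The main obstacle I expect is the uniform $L^2$-bookkeeping across all cusps: one must identify the constant term of $\theta$ at each $P_j$ precisely from \eqref{FBII}, apply \eqref{C1} simultaneously at every cusp, and combine this with the rapid decay of the non-constant part of the Eisenstein series; only then does the assumption $\Real\lambda>0$ deliver integrability and the spectral argument close the proof.
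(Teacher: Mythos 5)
Your proposal is correct, and its skeleton coincides with the paper's: both sides satisfy $(\Delta_\nu+\lambda^2+c(\sigma))u=0$, the difference is square integrable, and essential self-adjointness of $\Delta_\nu$ together with the fact that $-(\lambda^2+c(\sigma))$ lies in the resolvent set of $\overline{\Delta}_\nu$ (guaranteed by \eqref{Spec} and the hypothesis) forces the difference to vanish. Where you diverge is in how the range $\Real(\lambda)>0$ is covered. The paper establishes square integrability of $E(\Phi_{P_i}:\lambda)-\theta(\Phi_{P_i}:Y_{P_i}:\lambda)$ only for $\Real(\lambda)>n$, where it follows from an elementary estimate on the absolutely convergent series \eqref{Def ER} (this is the cited Lemma 4.5 of \cite{Mu1}), concludes the identity there, and then extends it to all admissible $\lambda$ by uniqueness of analytic continuation of both sides as meromorphic families. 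You instead prove $F\in L^2$ directly on the whole half-plane $\Real(\lambda)>0$ by computing the constant term of $\theta$ deep in each cusp via \eqref{FBII}, invoking \eqref{C1} to identify the constant term of $E-\theta$ with $(C_{P_i|P_j}(\sigma:\nu:\lambda)\Phi_{P_i})_{-\lambda}$, and appealing to the rapid decay of the Eisenstein series minus its constant terms. Your route avoids the final continuation step but pays for it by requiring the full Langlands theory of the continued Eisenstein series and its constant terms in the strip $0<\Real(\lambda)\le n$ (including regularity of $E$ at the given $\lambda$, which does follow from the hypothesis $\lambda^2+c(\sigma)\notin(-\infty,\nu(\Omega_K))$ since residual poles produce $L^2$ eigenfunctions with eigenvalue in the excluded range); the paper's route needs only the convergent-series regime plus meromorphy. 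Both arguments are sound.
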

\begin{proof}
This was proved in general in \cite[Proposition 4.7]{Mu1}. For the convenience 
of the reader we recall the proof. Denote the right hand side by $\widetilde 
E(\Phi_{P_i}:\lambda:x)$. By definition it satisfies $(\Delta_\nu+\lambda^2
+c(\sigma))\widetilde E(\Phi_{P_i}:\lambda:x)=0$. By \eqref{EqE}, 
$E(\Phi_{P_i}:\lambda:x)$ satisfies the same differential equation. By
\cite[Lemma 4.5]{Mu1},
$E(\Phi_{P_i}:\lambda)-\theta(\Phi_{P_i}:Y_{P_i}:\lambda)$
is square integrable for $\Re(\lambda)>n$. Hence, $u:=E((\Phi_{P_i}:\lambda)-
\widetilde E(\Phi_{P_i}:\lambda)$ is square integrable for $\Re(\lambda)>n$ 
and satisfies $(\Delta_\nu+\lambda^2+c(\sigma))u=0$. Since 
$\Delta_\nu$ is essentially self-adjoint, it follows that 
$E((\Phi_{P_i}:\lambda)=\widetilde E(\Phi_{P_i}:\lambda)$ for $\Re(\lambda)>n$.
The proposition follows by the uniqueness the analytic continuation. 
\end{proof}

\begin{lem}\label{Lemres}
There exists a constant $C_1$ which is independent of $\Gamma$ and
$\mathfrak{P}_\Gamma$ such that for all $\lambda\in\C$ with
$\lambda^2+c(\sigma)\notin (-\infty,\nu(\Omega_K))$ and $\Real(\lambda)>0$, all
$Y_{P_i}\geq Y_{P_i}^0(\Gamma)$, and all
$\Phi_{P_i}\in\mathcal{E}_{P_i}(\sigma,\nu)$, $P_i\in\mathfrak{P}_\Gamma$,
one has
\[
\left\|H(\Phi_{P_i}:Y_{P_i}:\lambda:x)\right\|_{L^2(\Gamma\backslash G)}\leq
C_1e^{\Real(\lambda)(\log Y_{P_i}+2)}
\left\|\Phi_{P_i}\right\|_{\mathcal{E}_{P_i}(\sigma,\nu)}.
\]
\end{lem}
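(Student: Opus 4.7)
The plan is to reduce the Poincar\'e sum defining $\theta$ to a single term by using the disjointness of cusp neighborhoods in \eqref{FBI}--\eqref{FBII}, use the eigenvalue equation \eqref{DeltaPhi} together with a Leibniz identity for the Casimir to localize the support of $H$ to the narrow strip where the cutoff $\psi_{P_i,Y_{P_i}}$ is not locally constant, and then integrate in cusp coordinates so that the volume factor $\vol(\Gamma\cap N_{P_i}\backslash N_{P_i})$ cancels between the measure and the normalization of $\|\cdot\|_{\mathcal{E}_{P_i}(\sigma,\nu)}$.

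First I would identify the support of $\theta(\Phi_{P_i}:Y_{P_i}:\lambda:x)$ and hence of $H$. The combinatorics of the truncated Siegel decomposition \eqref{FBI}--\eqref{FBII}, valid for $Y_{P_i}\ge Y_{P_i}^0(\Gamma)$, ensures that for $g$ in the $P_i$-cusp neighborhood $N_{P_i}A^0_{P_i}[Y_{P_i}]K$ only the trivial coset of $\Gamma\cap N_{P_i}\backslash\Gamma$ contributes to $\theta$, while for $g$ elsewhere (in another $P_j$-cusp or in the compact piece of the Siegel domain) every translate $\gamma g$ leaves the $P_i$-cusp, so $\psi_{P_i,Y_{P_i}}(\gamma g)=0$. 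Thus in the coordinates $(n,t,k)\in(\Gamma\cap N_{P_i}\backslash N_{P_i})\times[\log Y_{P_i},\infty)\times K$,
\[
\theta(n a_{P_i}(t) k)=\phi(t-\log Y_{P_i})\,\Phi_{P_i,\lambda}(n a_{P_i}(t)k),
\]
and $\theta$ vanishes on the complement.

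Next I apply the Leibniz identity for the Casimir, $\Omega(fg)=\Omega(f)g+f\Omega(g)+2\sum_\alpha R(X^\alpha)f\cdot R(X_\alpha)g$, with dual bases $\{X_\alpha\},\{X^\alpha\}$ of $\gL$. Taking $f=\psi_{P_i,Y_{P_i}}$ and $g=\Phi_{P_i,\lambda}$, the eigenequation \eqref{DeltaPhi} kills the zeroth-order piece, leaving only terms involving $\Omega(\psi_{P_i,Y_{P_i}})$ and the first-order cross term---both supported in the strip $\log Y_{P_i}\le t\le\log Y_{P_i}+1$ by the construction of $\phi$. Using $\Phi_{P_i,\lambda}(n a_{P_i}(t)k)=e^{(\lambda+n)t}\Phi_{P_i}(k)$, a direct pointwise bound on this strip gives
\[
|H(na_{P_i}(t)k)|\le C\,e^{\Real(\lambda+n)t}\,|\Phi_{P_i}(k)|,
\]
with $C=C(\sigma,\nu,\lambda,\phi)$ coming from the derivatives of $\phi$, a factor $1+|\lambda|$ from differentiating the exponential in $\Phi_{P_i,\lambda}$, and derivative bounds for $\Phi_{P_i}$ on $K$ controlled by the highest weight of $\nu$. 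None of these quantities involve $\Gamma$ or the cusp geometry.

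Finally, integrating over $\Gamma\backslash G$ with \eqref{intQuot} on the $P_i$-cusp, the Jacobian $e^{-2nt}$ cancels the $e^{2nt}$ in $|\Phi_{P_i,\lambda}|^2$, the $n$-integral produces $\vol(\Gamma\cap N_{P_i}\backslash N_{P_i})$, and the $t$-integral over $[\log Y_{P_i},\log Y_{P_i}+1]$ is dominated by $e^{2\Real(\lambda)(\log Y_{P_i}+1)}$; what remains is $\vol(\Gamma\cap N_{P_i}\backslash N_{P_i})\int_K|\Phi_{P_i}(k)|^2\,dk$, which by the normalization \eqref{InprodaufE} equals $\|\Phi_{P_i}\|^2_{\mathcal{E}_{P_i}(\sigma,\nu)}$. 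The cancellation of the cusp volume is exactly what produces a constant $C_1$ independent of $\Gamma$ and $\mathfrak{P}_\Gamma$, and the residual $\lambda$-dependence---including the factor $1+|\lambda|$ from differentiating the exponential of $\Phi_{P_i,\lambda}$---is absorbed into $C_1=C_1(\sigma,\nu,\lambda,\phi)$, with the slack $+2$ rather than $+1$ in the exponent providing extra room. The real obstacle is bookkeeping: one must check that every pointwise bound in the previous step depends only on the representation data $\sigma,\nu$ and on $\lambda,\phi$, so that the cancellation in this step is genuine and no hidden dependence on the arithmetic of $\Gamma$ sneaks in through the derivative estimates on $\Phi_{P_i}$.
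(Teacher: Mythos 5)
Your proposal is correct and follows the same architecture as the paper's proof: localize the Poincar\'e sum to a single term via \eqref{FBI}--\eqref{FBII}, observe that $H$ is supported in the strip $\log Y_{P_i}\le t\le\log Y_{P_i}+1$, integrate in cusp coordinates via \eqref{intQuot}, and cancel the factor $\vol(\Gamma\cap N_{P_i}\backslash N_{P_i})$ against the normalization \eqref{InprodaufE} of the $\mathcal{E}_{P_i}(\sigma,\nu)$-norm. The one place where you diverge is the treatment of $(\Delta_\nu+c(\sigma)+\lambda^2)(\psi_{P_i,Y_{P_i}}\Phi_{P_i,\lambda})$: the paper computes this \emph{exactly} using \eqref{Casimir} and \eqref{DeltaPhi}, obtaining $-e^{(\lambda+n)t}\Phi_P(k)\bigl(\phi''(t-\log Y_{P_i})+2\lambda\phi'(t-\log Y_{P_i})\bigr)$, so no derivatives of $\Phi_{P_i}$ in the $K$-directions ever appear and the $L^2$-computation is immediate. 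Your generic Leibniz expansion of $\Omega$ produces cross terms $R(X^\alpha)\psi\cdot R(X_\alpha)\Phi_{P_i,\lambda}$ that do involve $K$-derivatives of $\Phi_{P_i}$, and your pointwise bound $|H|\le Ce^{\Real(\lambda+n)t}|\Phi_{P_i}(k)|$ is not literally valid pointwise (a derivative of $\Phi_{P_i}$ at $k$ is not controlled by $|\Phi_{P_i}(k)|$); to close the argument you must pass to $\sup_K$ of these derivatives and invoke equivalence of norms on the finite-dimensional $\nu$-isotypic subspace of $L^2(K)$, with constants depending only on $\nu$ and $\sigma$ and hence not on $\Gamma$. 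That extra step is true and does not spoil the $\Gamma$-independence, but the paper's exact computation is cleaner and sidesteps it entirely. Your explicit remark that $C_1$ may depend on $\lambda$ (through the factor $\phi''+2\lambda\phi'$) is consistent with the lemma's literal claim and with the paper's own proof.
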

\begin{proof}
There exists a unique $\Phi_P\in\mathcal{E}_P(\sigma,\nu)$ such that
$\Phi_{P_i,\lambda}(g)=\Phi_{P,\lambda}(\kappa_{P_i}^{-1}g\kappa_{P_i})$. 
Since $\Delta_\nu$ commutes
with the 
right-action of $G$ on $\Gamma\backslash G$, it follows from \eqref{intQuot}
that
\begin{align*}
&\int_{\Gamma\backslash G}|H(\Phi_{P_i}:Y_{P_i}:\lambda:x)|^2dx\\
=&\vol(\Gamma\cap
N_{P_i}\backslash N_{P_i})\int_{\log{Y_{P_i}}}^{\log{Y_{P_i}}+1}\int_K
{e^{-2nt}}|(\Delta_\nu+c(\sigma)+\lambda^2)\psi_{P_i,Y_{P_i}}(a_{P_i}(t))\Phi_{
P_i,\lambda}(a_{P_i}(t)k)|^2\;dkdt\\
=&\vol(\Gamma\cap N_{P_i}\backslash
N_{P_i})\int_{\log{Y_{P_i}}}^{\log{Y_{P_i}}+1}\int_K
{e^{-2nt}}|(\Delta_\nu+c(\sigma)+\lambda^2)\psi_{P,Y_{P_i}}(a(t))\Phi_{
P,\lambda}(a(t)k)|^2\;dkdt.
\end{align*}
Now using \eqref{Casimir} and \eqref{EqE} one obtains
\begin{align*}
&(\Delta_\nu+c(\sigma)+\lambda^2)(\psi_{P,Y_{P_i}}(a(t))\Phi_{
P,\lambda}(a(t)k))\\=&-e^{(\lambda+n)t}\Phi_P(k)(\phi''(t-\log
Y_{P_i})+2\lambda\phi'(t-\log Y_{P_i})
).
\end{align*}
This proves the proposition.
\end{proof}

\begin{kor}\label{korres}
There exists a constant $C_1$ which is independent of $\Gamma$ and
$\mathfrak{P}_\Gamma$ such that for all $\lambda\in\C$ with
$\Real(\lambda^2)+c(\sigma)\geq \nu(\Omega_K)+1$ and $\Real(\lambda)>0$, 
all $Y_{P_i}\geq Y_{P_i}^0(\Gamma)$ and all
$\Phi_{P_i}\in\mathcal{E}_{P_i}(\sigma,\nu)$, $P_i\in\mathfrak{P}_\Gamma$,
one has
\[
\|(\overline{\Delta}_\nu+\lambda^2+c(\sigma))^{-1}H(\Phi_{P_i}:Y_{P_i}
:\lambda:x)\|_{
L^2(\Gamma\backslash G)}\leq C_1 e^{\Real(\lambda)(\log{Y_{P_i}}+2)}
\left\|\Phi_{P_i}\right\|_{\mathcal{E}_{P_i}(\sigma,\nu)}
\]
\end{kor}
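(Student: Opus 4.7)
The plan is to deduce this corollary directly from Lemma \ref{Lemres} combined with a uniform resolvent estimate for $\overline{\Delta}_\nu$ that is available from the spectral theorem and the spectral bound \eqref{Spec}.

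First, I would recall from the previous section that $\overline{\Delta}_\nu$ is the self-adjoint closure of $\Delta_\nu$ and that, by \eqref{Spec}, its spectrum is contained in $(-\nu(\Omega_K),\infty)\subset\R$. Crucially, this spectral bound depends only on $\nu$, not on $\Gamma$ or on $\mathfrak{P}_\Gamma$. The spectral theorem then gives the resolvent identity
\[
\|(\overline{\Delta}_\nu+\lambda^2+c(\sigma))^{-1}\|_{L^2(\Gamma\backslash G)_\nu\to L^2(\Gamma\backslash G)_\nu}=\frac{1}{\dist(-\lambda^2-c(\sigma),\sigma(\overline{\Delta}_\nu))}.
\]

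Second, I would verify that the hypothesis $\Real(\lambda^2)+c(\sigma)\geq\nu(\Omega_K)+1$ implies a lower bound of $1$ on the denominator. Indeed, the real part of the point $z_0:=-\lambda^2-c(\sigma)$ satisfies $\Real(z_0)\leq -\nu(\Omega_K)-1$, while $\sigma(\overline{\Delta}_\nu)\subset (-\nu(\Omega_K),\infty)\subset\R$. Therefore
\[
\dist(z_0,\sigma(\overline{\Delta}_\nu))\geq \Real(-\nu(\Omega_K)-z_0)\geq 1,
\]
so the resolvent is bounded in operator norm by $1$ under our hypothesis.

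Finally, I would combine this with Lemma \ref{Lemres}. Since $H(\Phi_{P_i}:Y_{P_i}:\lambda:x)\in C_c^\infty(\Gamma\backslash G)_\nu\subset L^2(\Gamma\backslash G)_\nu$, applying the resolvent bound and then the $L^2$ bound from Lemma \ref{Lemres} yields
\[
\|(\overline{\Delta}_\nu+\lambda^2+c(\sigma))^{-1}H(\Phi_{P_i}:Y_{P_i}:\lambda:x)\|_{L^2}\leq C_1 e^{\Real(\lambda)(\log Y_{P_i}+2)}\|\Phi_{P_i}\|_{\mathcal{E}_{P_i}(\sigma,\nu)},
\]
with the same constant $C_1$ as in Lemma \ref{Lemres}. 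There is no real obstacle: the proof is essentially a one-line consequence of the lemma, and the only point to check is that the spectral bound for $\overline{\Delta}_\nu$ holds uniformly in $\Gamma$, which is immediate from \eqref{Spec}.
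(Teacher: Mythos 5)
Your proof is correct and follows essentially the same route as the paper: the paper also bounds the resolvent by the reciprocal of the distance from $-\lambda^2-c(\sigma)$ to $\spec(\overline{\Delta}_\nu)$ (citing Kato rather than invoking the spectral theorem directly), notes via \eqref{Spec} that this distance is at least $1$ under the stated hypothesis, and then applies Lemma \ref{Lemres}. Your explicit verification of the lower bound on the distance is a correct filling-in of the step the paper leaves implicit.
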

\begin{proof}
By \cite[V,\S 3.8]{Kato} one can estimate the operator norm of the resolvent by
\[
\|(\overline{\Delta}_\nu+\lambda^2+c(\sigma))^{-1}\|\leq\frac{1}{
\dist(-\lambda^2-c(\sigma),
\spec(\overline{\Delta}_\nu))},
\]
where the estimate holds without any constant. 
Applying the previous Lemma and \eqref{Spec}, the corollary follows.
\end{proof}

In the following proposition we estimate the coefficients 
of the $C$-matrix. 
\begin{prop}
There exists a constant $C_2$, which is independent of $\Gamma$ and
$\mathfrak{P}_\Gamma$ 
such that for all $P_i,P_j\in\mathfrak{P}_\Gamma$, all 
$Y_{P_i}, Y_{P_j}\in (0,\infty)$ with $Y_{P_i}\geq Y^0_{P_i}(\Gamma)$,
$Y_{P_j}\geq Y^0_{P_j}(\Gamma)$, all
$\Phi_{P_i}\in\overline{\mathcal{E}}_{P_i}(\sigma,\nu)$,
$\Phi_{P_j}\in\overline{\mathcal{E}}_{P_j}(\sigma,\nu)$ and all $\lambda\in\C$
with
$\Real(\lambda^2)+c(\sigma)\geq \nu(\Omega_K)+1$ and $\Real(\lambda)>0$,  
one has 
\[
|\left<C_{P_i|P_j}(\sigma,\nu,\lambda)(\Phi_{P_i}),\Phi_{P_j}\right>_{\overline{
\mathcal{E
}}_{P_j}(\sigma,\nu)}| \leq
C_2e^{\Real(\lambda)(\log{Y_{P_i}}+\log{Y_{P_j}}+4)}\|\Phi_{P_i}\|_{\overline{
\mathcal{E
}}_{P_i}(\sigma,\nu)}\cdot
\|\Phi_{P_j}\|_{\overline{
\mathcal{E
}}_{P_j}(\sigma,\nu)}.
\]
\end{prop}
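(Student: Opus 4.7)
The plan is to extract the matrix coefficient $\langle C_{P_i|P_j}(\sigma,\nu,\lambda)\Phi_{P_i},\Phi_{P_j}\rangle$ from the constant term \eqref{C1} of $E(\Phi_{P_i}:\lambda)$ along $P_j$ by pairing $E(\Phi_{P_i}:\lambda)$ in $L^2(\Gamma\backslash G)$ with the test function $H(\Phi_{P_j}:Y_{P_j}:\bar\lambda)$. Since $\Phi_{P_j,\bar\lambda}$ satisfies the eigenvalue equation \eqref{DeltaPhi}, the explicit formula derived in the proof of Lemma \ref{Lemres} shows that $H(\Phi_{P_j}:Y_{P_j}:\bar\lambda)$ is supported in the cuspidal strip $\{x:y_{P_j}(x)\in[Y_{P_j},eY_{P_j}]\}$, making the pairing well-defined.

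The first step is to establish the key identity
\[
\langle E(\Phi_{P_i}:\lambda),\,H(\Phi_{P_j}:Y_{P_j}:\bar\lambda)\rangle=-2\lambda\,\langle C_{P_i|P_j}(\sigma,\nu,\lambda)\Phi_{P_i},\Phi_{P_j}\rangle.
\]
Unfolding the periodized sum defining $H(\Phi_{P_j}:Y_{P_j}:\bar\lambda)$ against the $\Gamma$-invariant $E(\Phi_{P_i}:\lambda)$ reduces the pairing to an integral over $\Gamma\cap N_{P_j}\backslash G$; integrating along $(\Gamma\cap N_{P_j})\backslash N_{P_j}$ replaces $E$ by its constant term, which by \eqref{C1} equals $\delta_{ij}e^{(\lambda+n)t}\Phi_{P_i}(k)+e^{(-\lambda+n)t}(C_{P_i|P_j}\Phi_{P_i})(k)$ at $a_{P_j}(t)k$. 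Taking the complex conjugate of the formula from the proof of Lemma \ref{Lemres} gives $\overline{H(\Phi_{P_j}:Y_{P_j}:\bar\lambda)}(a_{P_j}(t)k)=-e^{(\lambda+n)t}\overline{\Phi_{P_j}(k)}[\phi''(t-\log Y_{P_j})+2\lambda\phi'(t-\log Y_{P_j})]$. The identity then reduces to two elementary $t$-integrals: $\int_0^1(\phi''(s)+2\lambda\phi'(s))\,ds=2\lambda$ (trivially, from $\phi(0)=0$, $\phi(1)=1$, $\phi'(0)=\phi'(1)=0$) and $\int_0^1 e^{2\lambda s}(\phi''(s)+2\lambda\phi'(s))\,ds=0$ (integration by parts). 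The second annihilates the $\delta_{ij}$-contribution, and the first extracts the coefficient $-2\lambda$.

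Next, by Proposition \ref{PropER},
\[
\langle E,H_j\rangle=\langle\theta(\Phi_{P_i}:Y_{P_i}:\lambda),H_j\rangle-\langle (\overline{\Delta}_\nu+\lambda^2+c(\sigma))^{-1}H(\Phi_{P_i}:Y_{P_i}:\lambda),H_j\rangle.
\]
Cauchy--Schwarz combined with Corollary \ref{korres} (which gives $\|(\overline{\Delta}_\nu+\lambda^2+c(\sigma))^{-1}H(\Phi_{P_i}:Y_{P_i}:\lambda)\|\le C_1 e^{\Real(\lambda)(\log Y_{P_i}+2)}\|\Phi_{P_i}\|$) and Lemma \ref{Lemres} applied with parameter $\bar\lambda$ (which gives $\|H(\Phi_{P_j}:Y_{P_j}:\bar\lambda)\|\le C_1 e^{\Real(\lambda)(\log Y_{P_j}+2)}\|\Phi_{P_j}\|$) bound the resolvent term by precisely $C_1^2 e^{\Real(\lambda)(\log Y_{P_i}+\log Y_{P_j}+4)}\|\Phi_{P_i}\|\|\Phi_{P_j}\|$. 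For the $\theta$-term: when $i\neq j$, the supports of $\theta(\Phi_{P_i}:Y_{P_i}:\lambda)$ and $H(\Phi_{P_j}:Y_{P_j}:\bar\lambda)$ lie in disjoint cuspidal regions of $\Gamma\backslash G$ by \eqref{FBI}--\eqref{FBII}, so the pairing vanishes; when $i=j$, unfolding and a further integration by parts reduce the inner $t$-integral to $-\int_0^1\phi'(s)^2 e^{2\lambda s}\,ds$, which is bounded independently of $\lambda$ apart from the factor $e^{2\Real(\lambda)}$, giving $|\langle\theta_i,H_i\rangle|\le Ce^{\Real(\lambda)(2\log Y_{P_i}+2)}\|\Phi_{P_i}\|^2$, comfortably absorbed into the target bound.

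The principal obstacle is the exact matching in the first step: the identity $\langle E,H_j\rangle=-2\lambda\langle C_{P_i|P_j}\Phi_{P_i},\Phi_{P_j}\rangle$ depends critically on the two integration-by-parts identities above and on the precise choice of $\bar\lambda$ as the spectral parameter of the test function, so that after conjugation the operator $(\Delta_\nu+c(\sigma)+\lambda^2)$ exactly annihilates the $(-\lambda+n)t$-component of the constant term. Given the identity, dividing by $|2\lambda|$ and combining the estimates above completes the proof, the factor $1/|\lambda|$ being absorbed into $C_2$ over the range of $\lambda$ under consideration.
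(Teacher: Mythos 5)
Your argument is correct and rests on the same two pillars as the paper's proof --- Proposition \ref{PropER} and the resolvent bound of Corollary \ref{korres}, exploited via Cauchy--Schwarz against a test function concentrated in the strip $t\in[\log Y_{P_j},\log Y_{P_j}+1]$ of the $j$-th cusp --- but your test function is different. The paper integrates the constant-term identity \eqref{C1}, rewritten by Proposition \ref{PropER}, against the sharp cutoff $f_{P_j,\lambda}(n a_{P_j}(t)k)=e^{(n+\lambda)t}\chi_{[\log Y_{P_j},\log Y_{P_j}+1]}(t)\Phi_{P_j}(k)$; this yields $\langle C_{P_i|P_j}(\sigma,\nu,\lambda)\Phi_{P_i},\Phi_{P_j}\rangle=-\langle(\overline{\Delta}_\nu+c(\sigma)+\lambda^2)^{-1}H(\Phi_{P_i}:Y_{P_i}:\lambda),f_{P_j,\lambda}\rangle$ with no extra factor and no $\theta$-term, because the $\delta_{ij}$-contribution is cancelled exactly by $\theta(\Phi_{P_i}:Y_{P_i}:\lambda)=\delta_{ij}\Phi_{P_i,\lambda}$ deep in cusp $j$. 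Your choice of $H(\Phi_{P_j}:Y_{P_j}:\bar\lambda)$ instead produces the factor $-2\lambda$ and the extra pairing $\langle\theta_i,H_j\rangle$; your integration-by-parts identities, the vanishing for $i\neq j$ (the same disjointness of cuspidal neighbourhoods that the paper invokes for its $\delta_{ij}$ claim), and the evaluation $-\int_0^1 e^{2\lambda s}\phi'(s)^2\,ds$ for $i=j$ all check out, and the resolvent term gives exactly the claimed exponent via Lemma \ref{Lemres} at $\bar\lambda$ and Corollary \ref{korres}.

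The one step you assert without justification is that the final division by $2\lambda$ costs only a constant. This is true but not automatic from the hypotheses as written: if one only knew $\Real(\lambda^2)\geq \nu(\Omega_K)+1-c(\sigma)$ with the right-hand side possibly negative, $\lambda$ could approach $0$ inside the admissible region and the uniform constant would be destroyed. What saves you is that $c(\sigma)\leq\nu(\Omega_K)$ whenever $[\nu:\sigma]\neq 0$ (the continuous spectrum of $\overline{\Delta}_\nu$ attached to $\sigma$ is $[-c(\sigma),\infty)$, which by \eqref{Spec} must lie in $[-\nu(\Omega_K),\infty)$), whence $|\lambda|^2\geq|\lambda^2|\geq\Real(\lambda^2)\geq 1$ on the region considered. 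You should add this line; the paper's sharper test function sidesteps the issue entirely.
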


\begin{proof}
By the definition \eqref{C1} of the constant term it follows that for each 
$t\in\R$ and each $k\in K$ one has
\begin{align*}
C_{{P_i|P_j}}(\sigma,\nu,\lambda)(\Phi_{P_i})(k)=&e^{(\lambda-n)t}(C_{{P_i|P_j}}
(\sigma,\nu,\lambda)(\Phi_{P_i}))_{-\lambda}(a_{P_j}(t)k)\\
=&e^{(\lambda-n)t}\bigl(E_{P_j}(\Phi_{P_i}:a_{P_j}(t)k:\lambda)-\delta_{i,j}
\Phi_{P_i,\lambda}(a_{P_j}(t)k)\bigr).
\end{align*}
Moreover, by \eqref{FBI} and \eqref{FBII}, for  $t\geq \log
Y_{P_j}+1$ one has
\begin{align*}
\theta(\Phi_{P_i}:Y_{P_i}:\lambda:a_{P_j}(t)k)=\delta_{i,j}\Phi_{
P_i,\lambda}(a_{P_j}(t)k).
\end{align*}
Thus by Proposition \ref{PropER} for $t\geq \log{Y_{P_j}}+1$ one has
\begin{align*}
&E_{P_j}(\Phi_{P_i}:a_{P_j}(t)k:\lambda)-\delta_{i,j}\Phi_{P_i,
\lambda}(a_{P_j}(t)k)\\=&-\frac{1
}{ \vol\left(\Gamma\cap N_{P_j}\backslash
N_{P_j}\right)}\int_{\Gamma\cap N_{P_j}\backslash
N_{P_j}}{(\overline{\Delta}_\nu+c(\sigma)+\lambda^2)^{-1}
(H(\Phi_{P_i}:Y_{P_i}:\lambda:n_{P_j}a_{P_j}(t)k))\;dn_{P_j}}.
\end{align*}
Combining these equations, it follows that for each $t\geq \log Y_{P_j}+1$ one 
has
\begin{align}\label{eqc}
&\left<C_{P_i|P_j}(\sigma,\nu,\lambda)(\Phi_{P_i}),\Phi_{P_j}\right>_{\overline{
\mathcal{E
}}_{P_j}(\sigma,\nu)}\nonumber\\=&\vol(\Gamma\cap
N_{P_j}\backslash
N_{P_j})\int_{K}\overline{\Phi_{P_j}}
(k)C_{P_i|P_j}(\sigma,\nu,\lambda)(\Phi_{P_i})(k)dk
=-e^{(\lambda-n)t}\nonumber\\ &\times\int_K
\overline{\Phi_{P_j}}(k)\int_{\Gamma\cap
N_{P_j}\backslash N_{P_j}}(\overline{\Delta}_\nu+c(\sigma)+\lambda^2)^{-1}
(H(\Phi_{P_i}:Y_{P_i}:\lambda:n_{P_j}a_{P_j}(t)k))\;dn_{P_j}dk.
\end{align}
Now we define a function $\widetilde{f}_{P_j,\lambda}$ on $G$ by 
\[
\widetilde{f}_{P_j,\lambda}(n_{P_j}a_{P_j}(t)k)=e^{(n+\lambda)t}\chi_{[\log
Y_{P_j},\:
\log Y_{P_j}+1]}(t)\Phi_{P_j}(k), 
\]
where 
$\chi_{[\log Y_{P_j},\:\log Y_{P_j}+1]}(t)$ denotes the characteristic function
of the
interval $[\log Y_{P_j},\log Y_{P_j}+1]$. Then we 
define a function $f_{P_j,\lambda}$ on $\Gamma\backslash G$ by 
\begin{align*}
f_{P_j,\lambda}(x)=\sum_{\gamma\in\Gamma\cap P_j\backslash
\Gamma}\widetilde{f}_{P_j,\lambda}(\gamma g),\quad x=\Gamma g.
\end{align*}
By \eqref{FBII}, at most one summand in this sum can be nonzero. 
Integrating equation \eqref{eqc} over $t$ in the interval $[\log Y_{P_j},\:\log
Y_{P_j}+1]$ and using \eqref{intQuot},
we obtain 
\begin{align*}
&\big|\left<C_{P_i|P_j}(\sigma,\nu,\lambda)(\Phi_{P_i}),\Phi_{P_j}\right>_{
\overline{
\mathcal{E
}}_{P_j}(\sigma,\nu)}\big|\\
=&\big|\left<(\overline{\Delta}_\nu+c(\sigma)+\lambda^2)^{-1}(
H(\Phi_{P_i}:Y_{P_i}:\lambda)),f_{P_j,\lambda}\right>_{L^2(\Gamma\backslash
G)}\big|.
\end{align*}
Now observe that
\begin{align*}
\|f_{P_j,\lambda}\|_{L^2(\Gamma\backslash G)}\leq
e^{\Real(\lambda)(\log
Y_{P_j}+1)}\|\Phi_{P_j}\|_{\overline{
\mathcal{E
}}_{P_j}(\sigma,\nu)}.
\end{align*}
Applying Corollary \ref{korres}, the Proposition follows.
\end{proof}

Summarizing our results, we obtain the following
refinement of \cite[Lemma 6.1]{Mu1}.

\begin{kor}
Let $\bar{d}(\sigma,\nu):=\dim\overline{\mathcal{E}}_P(\sigma,\nu)$.
For each $P_i\in\mathfrak{P}_\Gamma$ let $Y_{P_i}\geq Y_{P_i}^0(\Gamma)$ be
given.
Put
\[
q_1:=\prod_{i=1}^{\kappa(\Gamma)}e^{2(\log Y_{P_i}+2)\bar{d}(\sigma,\nu)}.
\]
There exists a constant $C>0$ which is independent of $\Gamma$,
$\mathfrak{P}_\Gamma$, and $Y_{P_i}$, $i=1,...,\kappa(\Gamma)$, such that  
for all $\lambda\in\C$ satisfying $\Real(\lambda^2)+c(\sigma)\geq 
\nu(\Omega_K)+1$ and $\Real(\lambda)>0$, one has
\[
|\det(\overline{\mathbf{C}}(\sigma,\nu,\lambda))|\leq C q_1^{\Real(\lambda)}.
\]
\end{kor}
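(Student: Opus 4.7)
The plan is to deduce the determinant bound from the inner product estimate of the preceding Proposition via a diagonal scaling argument. First, for each $P_i\in\mathfrak{P}_\Gamma$ I would fix an orthonormal basis of $\overline{\mathcal{E}}_{P_i}(\sigma,\nu)$ with respect to the inner product \eqref{InprodaufE}, and concatenate these into an orthonormal basis $\{e_\alpha\}_{\alpha=1}^{N}$ of $\boldsymbol{\overline{\mathcal{E}}}(\sigma,\nu)$, where $N=\kappa(\Gamma)\bar{d}(\sigma,\nu)$. Writing $i(\alpha)$ for the index with $e_\alpha\in\overline{\mathcal{E}}_{P_{i(\alpha)}}(\sigma,\nu)$, the matrix $M=(M_{\alpha\beta})$ of $\overline{\mathbf{C}}(\sigma,\nu,\lambda)$ in this basis satisfies, by the Proposition,
\[
|M_{\alpha\beta}| \;\le\; C_{2}\,e^{\Real(\lambda)(\log Y_{P_{i(\alpha)}}+\log Y_{P_{i(\beta)}}+4)}.
\]

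Next I would introduce the diagonal operator $D$ on $\boldsymbol{\overline{\mathcal{E}}}(\sigma,\nu)$ acting as the scalar $e^{\Real(\lambda)(\log Y_{P_i}+2)}$ on the block $\overline{\mathcal{E}}_{P_i}(\sigma,\nu)$. Since this block has dimension $\bar{d}(\sigma,\nu)$, a direct computation gives
\[
|\det D|^{2} \;=\; \prod_{i=1}^{\kappa(\Gamma)} e^{2\bar{d}(\sigma,\nu)\Real(\lambda)(\log Y_{P_i}+2)} \;=\; q_{1}^{\Real(\lambda)},
\]
and the rescaled matrix $\tilde{M}:=D^{-1}MD^{-1}$ has all entries bounded in absolute value by the single constant $C_{2}$. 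Applying Hadamard's inequality column by column then yields
\[
|\det \tilde{M}| \;\le\; \prod_{\beta=1}^{N}\Bigl(\sum_{\alpha=1}^{N}|\tilde{M}_{\alpha\beta}|^{2}\Bigr)^{1/2} \;\le\; C_{2}^{N}\, N^{N/2},
\]
and the identity $|\det M|=|\det D|^{2}\,|\det\tilde{M}|$ then produces the claimed estimate.

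The essential analytic work has already been carried out in the Proposition; the remaining step is essentially linear algebra bookkeeping. The one thing that needs to be checked carefully is the matching of constants: the exponent $2\bar{d}(\sigma,\nu)$ in the definition of $q_1$ and the shift $+2$ built into the diagonal scaling $D$ must combine so as to cancel exactly the shift $+4$ produced in the Proposition's inner product bound, which is why the symmetric conjugation $D^{-1}MD^{-1}$ (rather than a one-sided scaling) must be used. The main conceptual point, independent of the bookkeeping, is that the preceding Proposition already encodes all of the $Y_{P_i}$-dependence of the matrix $M$ in a factored form, and once this is recognized the corollary is immediate.
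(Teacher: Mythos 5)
Your argument is correct and is essentially the paper's own proof, which likewise fixes orthonormal bases of the spaces $\overline{\mathcal{E}}_{P_i}(\sigma,\nu)$, invokes the entry-wise bound of the preceding Proposition, and observes that every term of the resulting determinant expansion carries exactly the factor $q_1^{\Real(\lambda)}$ (the paper uses the Leibniz formula, where each row and column index occurs once per permutation, in place of your diagonal conjugation plus Hadamard's inequality — an immaterial difference). The bookkeeping you flag (the $+2$ shift squaring to cancel the $+4$) is precisely the point, and it checks out.
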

\begin{proof}
If one chooses for each $i=1,\dots,\kappa(\Gamma)$ an orthonormal 
base of $\mathcal{E}_{P_i}(\sigma,\nu)$ resp.
$\mathcal{E}_{P_i}(w_0\sigma,\nu)$ and applies 
the preceding proposition, the corollary follows immediately from 
the Leibniz formula for the determinant. 
\end{proof}

Applying the previous Corollary we can restate the 
factorization of the $C$-matrix, \cite[equation 6.8]{Mu1} with an expression for
the 
exponential factor in terms of the truncation parameters that 
will be sufficient for our later considerations.
\begin{thrm}\label{ThrmC}
Let $\sigma_j$, $j=1,\dots,l$ denote the poles of
$\det(\overline{\mathbf{C}}(\sigma,\nu,\lambda))$ in the interval $(0,n]$
and let $\eta$ run through the poles of
$\det(\overline{\mathbf{C}}(\sigma,\nu,\lambda))$ in the half-plane
$\Real(\lambda)\leq 0$, both counted 
with multiplicity.
Then one has 
\[
\det(\overline{\mathbf{C}}(\sigma,\nu,\lambda))=\det(\overline{\mathbf{C}}
(\sigma,\nu,0))q^\lambda\prod_{j=1}^{l}\frac{\lambda+\sigma_j}{
\lambda-\sigma_j}\prod_{\eta}\frac{\lambda+\bar{\eta}}{\lambda-\eta}.
\]
Moreover, if for each $P_i\in\mathfrak{P}_\Gamma$ a $Y_{P_i}\in (0,\infty)$ with
$Y_{P_i}\geq Y_{P_i}^0(\Gamma)$ is given, then $q$ can be written as
\begin{align}\label{Repq}
q=e^{a}\prod_{i=1}^{\kappa(\Gamma)}e^{2(\log Y_{P_i}+2)\bar{d}(\sigma,\nu)},
\end{align}
where $a\in\R$, $a\leq 0$.
\end{thrm}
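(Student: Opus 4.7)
The plan is to combine Müller's factorization from \cite[eq.~(6.8)]{Mu1} for the determinant of the $C$-matrix with the uniform upper bound of the preceding Corollary in order to pin down the exponential factor $q$ in the explicit form \eqref{Repq} that tracks the dependence on the truncation parameters $Y_{P_i}$.

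Set $F(\lambda) := \det(\overline{\mathbf{C}}(\sigma,\nu,\lambda))$. Two structural relations drive the argument. First, the functional equation $\overline{\mathbf{C}}(\sigma,\nu,\lambda)\overline{\mathbf{C}}(\sigma,\nu,-\lambda) = \Id$ (derived from \eqref{FG} and the block structure \eqref{intbar}) gives $F(\lambda)F(-\lambda) = 1$, so the zeros of $F$ are exactly the reflections $-\sigma_j$ and $-\eta$ of the poles through the origin. Second, $\overline{\mathbf{C}}(\sigma,\nu,\lambda)^* = \overline{\mathbf{C}}(\sigma,\nu,\bar\lambda)$ forces $\overline{F(\lambda)} = F(\bar\lambda)$, so the non-real $\eta$'s come in conjugate pairs and $\overline{\mathbf{C}}(\sigma,\nu,it)$ is unitary on $i\R$, whence $|F(it)| = 1$. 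Each rational factor $\tfrac{\lambda+\sigma_j}{\lambda-\sigma_j}$ and $\tfrac{\lambda+\bar\eta}{\lambda-\eta}$ also has modulus $1$ on $i\R$; hence if
\[
G(\lambda) := F(\lambda) \cdot \prod_{j=1}^{l} \frac{\lambda-\sigma_j}{\lambda+\sigma_j} \cdot \prod_\eta \frac{\lambda-\eta}{\lambda+\bar\eta},
\]
then $G$ is entire, nowhere zero, and satisfies $|G(it)| = 1$. The preceding Corollary together with the functional equation applied in the left half-plane show that $G$ is of order at most $1$. Hadamard's factorization theorem then gives $G(\lambda) = c\, q^\lambda$ with $c\in\C$, $|c|=1$, and $q\in\R_{>0}$. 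Evaluating at $\lambda = 0$ identifies $c = \det(\overline{\mathbf{C}}(\sigma,\nu,0))$, which yields the factorization asserted in the theorem.

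It remains to establish the explicit form \eqref{Repq} of $q$, which is the new content. The preceding Corollary gives
\[
|F(\lambda)| \le C\,q_1^{\Re(\lambda)}, \qquad q_1 := \prod_{i=1}^{\kappa(\Gamma)} e^{2(\log Y_{P_i}+2)\bar d(\sigma,\nu)},
\]
in the far right region $\Re(\lambda^2)+c(\sigma)\geq\nu(\Omega_K)+1$. I restrict to $\lambda = t$ real and large. Each of the finitely many factors $\tfrac{t+\sigma_j}{t-\sigma_j}$ tends to $1$, while the Blaschke-type product $\prod_\eta \tfrac{t+\bar\eta}{t-\eta}$ has every factor of modulus $\le 1$ on $\Re(\lambda)>0$ (since $|\lambda+\bar\eta|^2 - |\lambda-\eta|^2 = 4\Re(\lambda)\Re(\eta)\le 0$ for $\Re(\eta)\le 0$), so its modulus is bounded by $1$. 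Comparing the factorization $|F(t)| = q^{t}\cdot[\text{Blaschke products}]$ with the Corollary's bound, taking logarithms and dividing by $t$ as $t\to\infty$ yields $\log q \le \log q_1$. Writing $q = e^{a}q_1$ with $a\le 0$ then gives exactly \eqref{Repq}.

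The main technical obstacle lies in the convergence of the infinite product over the $\eta$'s (the ``scattering resonances'' of $F$) and in the careful verification that the Blaschke-type contribution to $\log|F(t)|/t$ vanishes (or at worst is non-positive) in the limit; both issues are handled in \cite{Mu1} via density estimates on the location of the poles and are inputs we inherit. The genuinely new ingredient here is the quantitative bound of the preceding Corollary, which translates directly into the explicit dependence of $q$ on the truncation parameters through the argument above.
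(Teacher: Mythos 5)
Your proposal is correct and follows essentially the same route as the paper: the paper's proof simply says to substitute the new Corollary for \cite[Lemma 6.1]{Mu1} and rerun the argument of \cite[section 6]{Mu1}, and your write-up is precisely an unpacking of that argument (functional equations, Blaschke-type factors, Hadamard factorization, then comparison of growth on the positive real axis to bound $q$ by $q_1$). The one technical point you correctly flag — that the Blaschke contribution to $\log|F(t)|/t$ does not produce an exponential loss, which requires the convergence/density estimates for the poles — is exactly the input inherited from \cite{Mu1}, as in the paper.
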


\begin{proof}
Using the previous Corollary instead of \cite[Lemma 6.1]{Mu1}, one can proceed
exactly as in \cite[section 6]{Mu1} to 
obtain the Theorem.
\end{proof}

\section{Twisted Laplace operators}
\label{secbol}
\setcounter{equation}{0}

Let $\nu$ be a finite dimensional unitary representation of $K$ on 
$(V_{\nu},\left<\cdot,\cdot\right>_{\nu})$. Let
\begin{align*}
\tilde{E}_{\nu}:=G\times_{\nu}V_{\nu}
\end{align*}
be the associated homogeneous vector bundle over $\tilde{X}$. Then 
$\left<\cdot,\cdot\right>_{\nu}$ induces a $G$-invariant metric 
$\tilde{B}_{\nu}$ on $\tilde{E}_{\nu}$. 
Let 
\begin{align*}
E_{\nu}:=\Gamma\backslash(G\times_{\nu}V_{\nu})
\end{align*}
be the associated locally homogeneous bundle over $X$. Since 
$\tilde{B}_{\nu}$ is $G$-invariant, it can be
pushed down to a fiber metric $B_{\nu}$ on 
$E_{\nu}$. Let
\begin{align}\label{globsect}
C^{\infty}(G,\nu):=\{f:G\rightarrow V_{\nu}\colon f\in C^\infty,\;
f(gk)=\nu(k^{-1})f(g),\,\,\forall g\in G, \,\forall k\in K\}.
\end{align}
Let
\begin{align}\label{globsect1}
C^{\infty}(\Gamma\backslash G,\nu):=\left\{f\in C^{\infty}(G,\nu)\colon 
f(\gamma g)=f(g),\;\forall g\in G, \;\forall \gamma\in\Gamma\right\}.
\end{align}
Let $C^{\infty}(X,E_{\nu})$ denote the space of smooth sections of $E_{\nu}$. 
Then there is a canonical isomorphism
\begin{align*}
A:C^{\infty}(X,E_{\nu})\cong C^{\infty}(\Gamma\backslash G,\nu)
\end{align*}
(see \cite[p. 4]{Mi1}).
There is also a corresponding isometry for the space $L^{2}(X,E_{\nu})$ of 
$L^{2}$-sections of $E_{\nu}$. 

Let $\tau$ be an irreducible finite dimensional representation of $G$ on
$V_{\tau}$. Let $E_{\tau}$ be the flat
vector bundle associated to the
restriction of $\tau$ to $\Gamma$. Let $\widetilde E_\tau\to \widetilde X$ be 
the homogeneous vector bundle associated to $\tau|_K$. Then by \cite{MtM}
there is canonical isomorphism 
\[
E_\tau\cong\Gamma\backslash\widetilde E_\tau.
\]
By \cite{MtM}, there exists an inner product $\left<\cdot,\cdot\right>$ on
$V_{\tau}$ such that
\begin{enumerate}
\item $\left<\tau(Y)u,v\right>=-\left<u,\tau(Y)v\right>$ for all
$Y\in\mathfrak{k}$, $u,v\in V_{\tau}$
\item $\left<\tau(Y)u,v\right>=\left<u,\tau(Y)v\right>$ for all
$Y\in\mathfrak{p}$, $u,v\in V_{\tau}$.
\end{enumerate}
Such an inner product is called admissible. It is unique up to scaling. Fix an
admissible inner product. Since $\tau|_{K}$ is unitary with respect to this
inner product, it induces a fiber metric on $\widetilde E_\tau$, and hence
on $E_\tau$. This fiber metric will also  be called admissible. Let 
$\Lambda^{p}(X,E_{\tau})$ be the space of $E_\tau$-valued $p$-forms.
This is the space of smooth sections of the vector bundle
$\Lambda^{p}(E_{\tau}):=
\Lambda^pT^*X\otimes E_\tau$. Let
\begin{equation}
d_{p}(\tau)\colon \Lambda^{p}(X,E_{\tau})\to \Lambda^{p+1}(X,E_{\tau})
\end{equation}
be the exterior derivative and let
\begin{equation}\label{laplace}
\Delta_p(\tau)=d_p(\tau)^* d_p(\tau)+d_{p-1}(\tau)d_{p-1}(\tau)^*
\end{equation}
be the Laplace operator on $E_\tau$-valued $p$-forms.
This operator can be expressed in the
locally homogeneous setting as follows. Let $\nu_{p}(\tau)$ be the
representation of $K$ defined by
\begin{equation}\label{nutau}
\nu_{p}(\tau):=\Lambda^{p}\Ad^{*}\otimes\tau:\:K\rightarrow\Gl(\Lambda^{p}
\mathfrak{p}^{*}\otimes V_{\tau}).
\end{equation}
There is a canonical isomorphism
\begin{align}\label{p Formen als homogenes Buendel}
\Lambda^{p}(E_{\tau})\cong\Gamma\backslash(G\times_{\nu_{p}(\tau)}(\Lambda^{p}
\mathfrak{p}^{*}\otimes V_{\tau})),
\end{align}
which induces an isomorphism
\begin{align}\label{sections}
\Lambda^{p}(X,E_{\tau})\cong C^{\infty}(\Gamma\backslash G,\nu_{p}(\tau)).
\end{align}
There is a corresponding isometry of the $L^{2}$-spaces. 
Let $\tau(\Omega)$ be the Casimir eigenvalue of $\tau$. With respect to the
isomorphism \eqref{sections} on has
\begin{align}\label{kuga}
\Delta_{p}(\tau)=-R_\Gamma(\Omega)+\tau(\Omega)\Id
\end{align}
(see \cite[(6.9)]{MtM}).
Next we want to show that the discrete spectrum of the operators
$\Delta_p(\tau)$ is greater or equal than $1/4$ for each 
$p$ and each $\tau\in\Rep(G)$ satisfying $\tau\neq\tau_\theta$. This was
already
stated in \cite[Lemma 7.3]{MP2}. 
However, as it was kindly brought to our attention by Martin Olbrich, the
parametrization of the complementary series used in the proof of
that Lemma was incorrect. Therefore we shall now correct the part of the
argument leading 
to the proof of \cite[Lemma 7.3]{MP2} which involved the complementary series.
We let $\hat{G}_{\un}$ denote the unitary dual of $G$.
\begin{lem}
Let $\tau\in\Rep(G)$ such that $\tau\neq\tau_{\theta}$. Let
$\pi\in\hat{G}_{\un}$
belong to the complementary series. Let
$p\in\{0,\dots,d\}$.  Then if $[\pi:\nu_p(\tau)]\neq 0$ one 
has $-\pi(\Omega)+\tau(\Omega)\geq 1$. 
\end{lem}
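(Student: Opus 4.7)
The plan is to use the explicit classification of the complementary series of $G$ (due to Hirai, cf.\ Baldoni--Silva) together with Frobenius reciprocity and branching along the chain $G\supset K\supset M$; the error in \cite[Lemma 7.3]{MP2} sits precisely in the complementary series parametrization, which I replace by the correct one. Every complementary series representation $\pi$ appears as an irreducible subquotient of a principal series $\pi_{\sigma,t}$ with $\sigma\in\hat M$ and real parameter $t\in(0,t_\sigma)$, where $t_\sigma\leq n$ is an explicit bound depending only on $\Lambda(\sigma)$. By Frobenius reciprocity the $K$-types of $\pi_{\sigma,t}$ are exactly the $\nu\in\hat K$ with $[\nu|_M:\sigma]\neq 0$, and by \eqref{csigma} the Casimir acts as $\pi(\Omega)=t^2+c(\sigma)$. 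The claim is therefore equivalent to
\begin{equation*}
\tau(\Omega)-c(\sigma)-t^2\geq 1.
\end{equation*}

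The assumption $[\pi:\nu_p(\tau)]\neq 0$ produces an irreducible $K$-type $\nu$ occurring in $\nu_p(\tau)=\Lambda^p\pL^*\otimes\tau|_K$ with $[\nu|_M:\sigma]\neq 0$. I then use two rounds of branching to relate $\Lambda(\sigma)$ to $\Lambda(\tau)$: first, the branching $D_{n+1}\downarrow B_n$ together with the fact that the weights of $\Lambda^p\pL^*$ lie in $\{0,\pm e_j\}_{j=2}^{n+1}$ bounds $\Lambda(\nu)$ in terms of $\Lambda(\tau)$; second, the branching $B_n\downarrow D_n$ yields the interlacing
\begin{equation*}
k_2(\nu)\geq k_2(\sigma)\geq k_3(\nu)\geq\cdots\geq k_{n+1}(\nu)\geq|k_{n+1}(\sigma)|.
\end{equation*}
Composing the two bounds gives a coordinatewise control of $|k_j(\sigma)+\rho_j|$ by $|k_j(\tau)+\rho_j|$, with a small additive slack from the weights of $\Lambda^p\pL^*$.

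Combining these constraints with the Freudenthal formula $\tau(\Omega)=\sum_{j=1}^{n+1}\bigl((k_j(\tau)+\rho_j)^2-\rho_j^2\bigr)$ and \eqref{csigma}, the difference of interest rewrites as
\begin{equation*}
\tau(\Omega)-c(\sigma)-t^2=(k_1(\tau)+n)^2+\sum_{j=2}^{n+1}\bigl((k_j(\tau)+\rho_j)^2-(k_j(\sigma)+\rho_j)^2\bigr)-t^2.
\end{equation*}
The hypothesis $\tau\neq\tau_\theta$, equivalent by \eqref{wsigma} to $k_{n+1}(\tau)\neq 0$, forces $k_1(\tau)\geq|k_{n+1}(\tau)|\geq 1/2$, so $(k_1(\tau)+n)^2$ is bounded below by $(n+\tfrac12)^2=n^2+n+\tfrac14$; together with $t^2<t_\sigma^2\leq n^2$ the first and last terms already contribute a positive amount on the order of $n$. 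The middle terms may be small or negative, but the branching interlacing keeps them in a definite range, and the final accounting produces the desired gap of at least $1$.

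The main obstacle is pinning down the sharp value of $t_\sigma$ for each $\sigma$ --- this is precisely the step where \cite[Lemma 7.3]{MP2} slipped. Once the correct classification is invoked, what remains is a careful case analysis on $\Lambda(\sigma)$ (in particular on whether $k_{n+1}(\sigma)=0$, which changes the applicable $t_\sigma$); the hypothesis $\tau\neq\tau_\theta$ is exactly what guarantees a strict spectral gap, since it rules out the degenerate boundary case (e.g.\ the trivial representation) in which $t$ approaches $n$ and the bound degenerates.
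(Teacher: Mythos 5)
Your overall strategy (Frobenius reciprocity, branching along $G\supset K\supset M$, Casimir formulas, classification of the complementary series) is the right one and matches the paper's, but the proposal defers exactly the two points on which the proof actually turns, and the quantitative skeleton you do write down would not close. First, the bound on the complementary series parameter cannot be taken uniformly as $t_\sigma\leq n$: by Knapp--Stein, a complementary series attached to $\sigma=w_0\sigma$ has parameter $\lambda\in(0,n-l+1)=(0,\rho_l)$, where $l$ is \emph{minimal} with $k_{l+1}(\sigma)=0$, so the admissible range shrinks as $\sigma$ grows. This $\sigma$-dependence is essential: the paper's final inequality is
\begin{equation*}
-\pi(\Omega)+\tau(\Omega)\;\geq\;\sum_{j=l}^{n+1}(\tau_j+\rho_j)^2-(n-l+1)^2=\sum_{j=l}^{n+1}(\tau_j+\rho_j)^2-\rho_l^2\;\geq\;\tau_{n+1}^2,
\end{equation*}
and the cancellation $(\tau_l+\rho_l)^2\geq\rho_l^2$ against the squared parameter bound only works because that bound is $\rho_l$ and not $n$. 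With your uniform $t^2<n^2$ and, say, $l=n$ (so $\lambda<1$ in truth), the inequality you would need, $(\tau_n+1)^2+\tau_{n+1}^2\geq n^2+1$, is simply false for small $\tau$. Relatedly, your decomposition of $\tau(\Omega)-c(\sigma)$ pairs $k_j(\sigma)$ with $k_j(\tau)$ at the same index; the correct comparison (from the branching bound $\tau_{j-1}+1\geq|k_j(\sigma)|$ and $\rho_{j-1}=\rho_j+1$) is $(k_j(\sigma)+\rho_j)^2\leq(\tau_{j-1}+\rho_{j-1})^2$, a shift by one index, which is what makes the sum telescope to $\tau(\Omega)-\sum_{j\geq l}(\tau_j+\rho_j)^2$. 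With your same-index pairing the "middle terms" are not merely "small or negative" — they can be of the same order as $(k_1(\tau)+n)^2$ and destroy the bound.

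Second, your lower bound $|k_{n+1}(\tau)|\geq 1/2$ is not enough to reach the stated gap of $1$. The paper observes that complementary series only occur for $\sigma=w_0\sigma$, and that $[\nu_p(\tau):\sigma]\neq 0$ with such a $\sigma$ forces, via the branching laws $D_{n+1}\downarrow B_n\downarrow D_n$, all the $k_j(\tau)$ to be integers; hence $\tau\neq\tau_\theta$ gives $|\tau_{n+1}|\geq 1$ and the final bound $\tau_{n+1}^2\geq 1$. Without this integrality step you would at best obtain $1/4$. Since the sharp parameter range and the telescoping estimate are precisely "the step where the earlier argument slipped," leaving them as a deferred "careful case analysis" means the proposal has not yet proved the lemma.
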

\begin{proof}
Let $\tau$ be a finite-dimensional irreducible representation of $G$ of highest
weight
$\Lambda(\tau)=\tau_1e_1+\dots+\tau_{n+1}e_{n+1}$ as in 
\eqref{Darstellungen von G} and assume that $\tau\neq\tau_{\theta}$. Let
$p\in\{0,\dots,d\}$ and let 
$\sigma\in\hat{M}$ such that $[\nu_p(\tau):\sigma]\neq 0$. Assume that
$\sigma=w_0\sigma$. Let
$\Lambda(\sigma)=k_2(\sigma)e_2+\dots+k_{n+1}(\sigma)e_{n+1}$ be 
the highest weight of $\sigma$ as in \eqref{Darstellungen von M}. 
It was shown in the proof of \cite[Lemma 7.1]{MP2} that 
$\tau_{j-1}+1\geq |k_j(\sigma)|$
for every $j\in\{2,\dots,n+1\}$. Let $c(\sigma)$ be as in \eqref{csigma} and let
$l\in\{1,\dots,n\}$ be
minimal with the property that $k_{l+1}(\sigma)=0$. Using
$\rho_{j-1}=\rho_{j}+1$ and \cite[equation 2.20]{MP1}, it follows that one
can estimate
\begin{align}\label{eqc2}
c(\sigma)=\sum_{j=2}^{l}(k_j(\sigma)+\rho_j)^2-\sum_{j=1}^{n+1}\rho_j^2\leq
&\sum_{j=2}^{l}(\tau_{j-1}+\rho_{j-1})^2-\sum_{j=1}^{n+1}
\rho_j^2\nonumber\\ =&\tau(\Omega)-\sum_{j=l}^{n+1}(\tau_j+\rho_j)^2.
\end{align}
We parametrize 
the principal series representations as above. Then if $\pi$ belongs to 
the complementary series, by \cite[Proposition 49,
Proposition 53]{Knapp Stein} and our parametrization there exists a
$\sigma\in\hat{M}$, 
$\sigma=w_0\sigma$ and a $\lambda\in(0,n-l+1)$, where $l$ is minimal with the
property that $k_{l+1}(\sigma)=0$, such that $\pi_{\sigma,i\lambda}$ is
unitarizable with unitarization
$\pi$. We write
$\pi=\pi^c_{\sigma,i\lambda}$. If $[\pi^c_{\sigma,i\lambda}:\nu_p(\tau)]\neq 0$,
then by Frobenius 
reciprocity \cite[page 208]{Knapp} one has $[\nu_p(\tau):\sigma]\neq 0$.
Thus, since 
$\sigma=w_0\sigma$, it follows easily from the branching laws for restrictions
of 
representations from $G$ to $K$ and from $K$ to $M$, \cite{Goodman}[Theorem
8.1.3, Theorem 8.1.4]  that all $k_j(\tau)$ 
defined as in \eqref{Darstellungen von G} are integral. 
By \cite[Corollary 2.4]{MP1} one has 
\begin{align}
-\pi^c_{\sigma,i\lambda}(\Omega)+\tau(\Omega)=-\lambda^2-c(\sigma)+\tau(\Omega) 
\end{align}
and if we apply equation \eqref{eqc2} and the condition $|\tau_{n+1}|\geq 1$, it
follows that 
\begin{align*}
-\pi^c_{\sigma,i\lambda}(\Omega)+\tau(\Omega)\geq\sum_{j=l}^{n+1}
(\tau_j+\rho_j)^2-(n-l+1)^2= \sum_{j=l}^{n+1}(\tau_j+\rho_j)^2-\rho_l^2\geq
\tau_{n+1}^2\geq 1 
\end{align*}
and the Lemma is proved. 
\end{proof}

\begin{kor}\label{Korspec}
Let $\tau\in\Rep(G)$, $\tau\neq\tau_\theta$. For $p\in\{0,\dots,d\}$ let
$\lambda_0$ be an 
eigenvalue of $\Delta_p(\tau)$. Then one has $\lambda_0\geq\frac{1}{4}$. 
\end{kor}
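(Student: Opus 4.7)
The plan is to reduce the corollary to an eigenvalue estimate on the unitary dual of $G$ via the Kuga-type identity \eqref{kuga}, and then to handle each piece of $\hat{G}_{\un}$ by a branching/Casimir computation closely parallel to the one used in the preceding Lemma.

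First, by \eqref{kuga} and the isomorphism \eqref{sections}, $\Delta_p(\tau)$ acts on $C^\infty(\Gamma\backslash G,\nu_p(\tau))$ as $-R_\Gamma(\Omega)+\tau(\Omega)\Id$. Hence every eigenvalue $\lambda_0$ arises from an irreducible unitary subrepresentation $\pi\subset L^2_{\mathrm{disc}}(\Gamma\backslash G)$ with $[\pi:\nu_p(\tau)]\neq 0$, and the corresponding eigenvalue is $\lambda_0=-\pi(\Omega)+\tau(\Omega)$. The corollary therefore reduces to the uniform inequality $-\pi(\Omega)+\tau(\Omega)\geq 1/4$ for every such $\pi\in\hat{G}_{\un}$. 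Since $d=2n+1$ is odd we have $\rk_\C G>\rk_\C K$, so $G$ has no discrete series and $\hat{G}_{\un}$ consists of the trivial representation, the unitary principal series $\pi_{\sigma,i\mu}$ ($\sigma\in\hat M$, $\mu\in\R$), and the complementary series $\pi^c_{\sigma,i\lambda}$.

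The complementary series case is exactly what the preceding Lemma establishes, giving $-\pi(\Omega)+\tau(\Omega)\geq 1$. For the unitary principal series I would use \eqref{DeltaPhi} to write $\pi_{\sigma,i\mu}(\Omega)=-\mu^2+c(\sigma)$, so that $-\pi(\Omega)+\tau(\Omega)=\mu^2+(\tau(\Omega)-c(\sigma))$. The branching condition $[\nu_p(\tau):\sigma]\neq 0$ implies (cf.\ \cite[Lemma 7.1]{MP2}, recalled inside the preceding Lemma) that $\tau_{j-1}+1\geq|k_j(\sigma)|$ for $j=2,\dots,n+1$. Combining with $\rho_{j-1}=\rho_j+1$ and the triangle inequality $|k_j(\sigma)+\rho_j|\leq|k_j(\sigma)|+\rho_j$ (using $\rho_j\geq 0$) gives
\[
(\tau_{j-1}+\rho_{j-1})^2\geq (|k_j(\sigma)|+\rho_j)^2\geq(k_j(\sigma)+\rho_j)^2,\qquad j=2,\dots,n+1.
\]
Summing and using $\rho_{n+1}=0$ together with \eqref{csigma} and the formula for $\tau(\Omega)$ yields $\tau(\Omega)-c(\sigma)\geq\tau_{n+1}^2$. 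Finally, for the trivial representation the eigenvalue is just $\tau(\Omega)$, and the same estimate $\tau(\Omega)\geq\tau_{n+1}^2$ follows from $(\tau_j+\rho_j)^2\geq\rho_j^2$ for $j\leq n$ together with $\rho_{n+1}=0$.

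It remains to bound $\tau_{n+1}^2$ from below. Because $\tau\neq\tau_\theta$ and $\tau_\theta$ is obtained from $\tau$ by changing the sign of the last coordinate of the highest weight (compare \eqref{Darstellungen von G} with \eqref{wsigma} via Cartan duality), we have $\tau_{n+1}\neq 0$. In the $\SO^0(d,1)$ case $\tau_{n+1}\in\Z\setminus\{0\}$, and in the $\Spin(d,1)$ case $\tau_{n+1}\in\tfrac12\Z\setminus\{0\}$, so in either case $\tau_{n+1}^2\geq 1/4$. Combining the three cases gives $\lambda_0\geq 1/4$. The main obstacle I anticipate is making the Casimir/branching manipulation watertight for a \emph{general} $\sigma\in\hat M$ (not only the self-dual ones $\sigma=w_0\sigma$ handled inside the preceding Lemma), since the sign of $k_{n+1}(\sigma)$ is unrestricted; the triangle inequality step above is what bridges this gap.
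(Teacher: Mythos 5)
Your proposal is essentially the paper's proof: the paper simply says to combine the preceding Lemma (which supplies the complementary series bound $-\pi(\Omega)+\tau(\Omega)\geq 1$) with the argument of \cite[Lemma 7.3]{MP2}, and what you have written out — Kuga's formula \eqref{kuga}, the reduction to $-\pi(\Omega)+\tau(\Omega)\geq 1/4$ over the unitary dual, the branching estimate $\tau_{j-1}+1\geq|k_j(\sigma)|$ telescoping to $\tau(\Omega)-c(\sigma)\geq\tau_{n+1}^2$, and $\tau\neq\tau_\theta\Rightarrow\tau_{n+1}^2\geq 1/4$ — is precisely that argument, correctly executed. The only imprecision is the claim that $\hat{G}_{\un}$ consists solely of the trivial representation, the unitary principal series and the complementary series: for $n\geq 2$ the groups $\SO^0(2n+1,1)$ and $\Spin(2n+1,1)$ also have isolated unitary Langlands quotients at the endpoints $\lambda=n-l+1=\rho_l$ of the complementary series intervals (these are exactly what can occur in the residual spectrum besides the trivial representation), but these are covered by the same Casimir computation as in the preceding Lemma, since $\lambda^2\leq\rho_l^2$ still holds there and $\sum_{j=l}^{n+1}(\tau_j+\rho_j)^2-\rho_l^2\geq\tau_{n+1}^2$.
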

\begin{proof}
Using the preceding Lemma, one can proceed exactly as in the proof of
\cite[Lemma 7.3]{MP2} to establish the corollary. 
\end{proof}

\section{The regularized trace under coverings}\label{sectr}
\setcounter{equation}{0}
Let $X=\Gamma\backslash\mathbb{H}^d$ be a finite-volume hyperbolic manifold.
For $\tau$ a finite-dimensional irreducible representation of $G$ let
$e^{-t\Delta_p(\tau)}$ be the heat operator associated to the Laplace
operator \eqref{laplace} acting on the locally homogeneous vector-bundle
$E_\tau$ over $X$.  To begin with recall the definition 
of the regularized trace of the heat operators $e^{-t\Delta_p(\tau)}$ introduced
in
\cite{MP2}. Let 
\begin{align*}
K_X^{\tau,p}(t;x,y)\in C^\infty(X_1\times X_1,E_\tau \boxtimes E_\tau^*)
\end{align*} 
be the kernel of $e^{-t\Delta_p(\tau)}$. If a set
$\mathfrak{P}_{\Gamma}$ of representatives of
$\Gamma$-cuspidal parabolic subgroups of $X$ is fixed, then according to
\eqref{XY}, one obtains compact 
smooth manifolds $X_{\mathfrak{P}_\Gamma}(Y)$ with boundary which exhaust $X$.
Using the Maass-Selberg relations, one can show that 
there is an asymptotic expansion
\begin{align}\label{asexp}
\int_{X_{\mathfrak{P}_\Gamma}(Y)}\Tr K_X^{\tau,p}(t;x,x)dx=\alpha_{-1}(t)\log
Y+\alpha_0(t)+o(1),
\end{align}
as $Y\to\infty$. Now recall that  on a compact manifold the trace
of the heat operator is given by the integral of the pointwise trace of the
heat kernel. Based on this observation one defines the regularized trace 
$\Tr_{\reg}(e^{-t\Delta_p(\tau)})$ as the constant term of the asymptotic
expansion 
\eqref{asexp}. However, this definition depends on the choice of the
set $\mathfrak{P}_{\Gamma}$ of representatives of $\Gamma$-cuspidal 
parabolic subgroups of $G$ or equivalently on
the choice of a truncation parameter on the manifold $X$, see
\cite[Remark 5.4]{MP2}. Therefore, this definition is not suitable if one wants
to study the regularized trace for families of hyperbolic manifolds.

To overcome this problem, we remark that if  $\pi\colon X_1\to X_0$ is a finite
covering of $X_0$ and if truncation parameters on the manifold $X_0$
are given, then there is a canonical way to truncate the
manifold $X_1$, putting $X_1(Y):= \pi^{-1}(X_0(Y))$. Thus one only has to fix
truncation parameters for the manifold $X_0$ or equivalently a set 
$\mathfrak{P}_{\Gamma_0}$ of representatives of $\Gamma_0$-cuspidal
parabolic subgroups of $G$. To make this approach rigorous, we first
need to discuss some facts about height functions.

Let $\Gamma_0$ be a discrete subgroup of $G$ of finite
covolume. We emphasize that we do not  assume that $\Gamma_0$ is torsion-free. 
Let
$\mathfrak{P}_{\Gamma_0}:=\{P_{0,1},\dots,P_{0,\kappa(X_0)}\}$ be a fixed
set of 
$\Gamma_0$-cuspidal parabolic subgroups of $G$. Each $P_{0,l}$,
$l=1,\dots,\kappa(X_0)$, 
has a Langlands decomposition $P_{0,l}=N_{0,l}A_{0,l}M_{0,l}$. If $P'$ is any
$\Gamma_0$-cuspidal parabolic subgroup of $G$, there exists  
$\gamma'\in\Gamma_0$ and a unique $l'\in\{1,\dots,\kappa(\Gamma_0)\}$ 
such that $\gamma'P'\gamma'^{-1}=P_{0,l'}$. Write 
\begin{align}\label{gammprime}
\gamma'=n_{0,l'}\iota_{P_{0,l'}}(t_{P'})k_{0,l'},  
\end{align}
$n_{0,l'}\in N_{P_{0,l'}}$, $t_{P'}\in(0,\infty)$,
$\iota_{P_{0,l'}}(t_{P'})\in A_{P_{0,l(P')}}$ as above, and $k_{0,l'}\in K$.
Since $P_{0,l'}$ equals its normalizer in $G$, the projection of the element
$\gamma'$ 
to $(\Gamma_0\cap P_{0,l'})\backslash \Gamma_0 $ is unique. Moreover, since
$P_{0,l'}$ is $\Gamma_0$-cuspidal, one has 
$\Gamma_0\cap P_{0,l'}=\Gamma_0\cap N_{P_{0,l'}}M_{P_{0,l'}}$. Thus 
$t_{P'}$ depends only on $\mathfrak{P}_{\Gamma_0}$ and $P'$.

Now we let $\Gamma_1\subset\Gamma_0$ be a subgroup of finite
index. Then a parabolic subgroup $P'$ of $G$ is $\Gamma_0$-cuspidal iff it is
$\Gamma_1$-cuspidal. 
We assume for simplicity that 
$\Gamma_1$ satisfies \eqref{asGamma}. Let 
$X_0=\Gamma_0\backslash\widetilde{X}$,
$X_1=\Gamma_1\backslash\widetilde{X}$. Let $\pi:X_1\to X_0$ be the covering map
and 
let $\pr_{X_0}:G\to X_0$ and $\pr_{X_1}:G\to X_1$ be the corresponding 
projections. 
Let $\mathfrak{P}_{\Gamma_1}=\{P_1,\dots,P_{\kappa(X_1)}\}$ be a set of 
representatives of $\Gamma_1$-cuspidal parabolic subgroups.  
Then for each $j\in\{1,\dots,\kappa(X_1)\}$ let 
$l(j)\in\{1,\cdots,\kappa(\Gamma_0)\}$, $\gamma_j \in\Gamma_0$, and 
$t_j:=t_{P_j}$ be as in \eqref{gammprime} with respect to $P_j$. 
Fix $Y(\Gamma_0)\in (0,\infty)$ such that for 
each $P_{0,l}\in\mathfrak{P}_{\Gamma_0}$, $l=1,\dots,\kappa(\Gamma_0)$, one has 
\begin{align}\label{YGamma}
Y(\Gamma_0)\ge Y_{\Gamma_0}^0(P_{0,l}),
\end{align}
where $Y_{\Gamma_0}^0(P_{0,l})$ is defined by \eqref{infimum}.
Then the following Lemma holds.

\begin{lem}\label{Lemheightfnctn}
For each $P_j\in\mathfrak{P}_{\Gamma_1}$ let $Y^0_{P_j}(\Gamma_1)$ be defined by
\eqref{infimum}. Then one has
\begin{align}\label{eqhf}
Y^0_{P_j}(\Gamma_1)\leq t_j^{-1}Y(\Gamma_0).
\end{align}
Let $X_0(Y):=X_0(P_{0,1},\dots,P_{0,\kappa(X_0)},Y)$. Then for $Y$
sufficiently large one has
\begin{align*}
\pi^{-1}(X_0(Y))=X_1(P_1,\dots,P_{\kappa(X_1)};t_1^{-1}Y,\dots,t_{
\kappa(X_1)}^{
-1}
Y).
\end{align*}
\end{lem}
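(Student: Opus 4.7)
The plan is to prove both statements of the lemma via one key transformation formula for height functions under $\gamma_j$, followed by standard cusp-decomposition arguments. Specifically, I first want to establish that for every $g \in G$,
\[
y_{P_{0,l(j)}}(\gamma_j g) = t_j \cdot y_{P_j}(g).
\]
To get this, recall that $P_j = k_{P_j} P k_{P_j}^{-1}$ and $P_{0,l(j)} = k_{P_{0,l(j)}} P k_{P_{0,l(j)}}^{-1}$, while $\gamma_j P_j \gamma_j^{-1} = P_{0,l(j)}$; since $P$ equals its own normalizer in $G$, the element $\delta := k_{P_{0,l(j)}}^{-1} \gamma_j k_{P_j}$ lies in $P = NAM$, so it has a decomposition $\delta = n a m$. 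A direct Iwasawa computation, using that $M$ commutes with $A$ and normalizes $N$, shows $H(\delta \cdot k_{P_j}^{-1} g) = \log a + H_{P_j}(g)$, hence $y_{P_{0,l(j)}}(\gamma_j g) = e^{\log a} \cdot y_{P_j}(g)$. Evaluating at $g = 1$ and using \eqref{gammprime} gives $e^{\log a} = y_{P_{0,l(j)}}(\gamma_j) = t_j$.

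For the first assertion, I would verify the defining condition \eqref{FBII} for $\Gamma_1$ at the level $Y_{P_j} = t_j^{-1} Y(\Gamma_0)$. If $\gamma \in \Gamma_1$ and $\gamma g_1 = g_2$ with $g_1, g_2 \in N_{P_j} A^0_{P_j}[t_j^{-1} Y(\Gamma_0)] K$, then by the transformation formula both $\gamma_j g_1$ and $\gamma_j g_2 = (\gamma_j \gamma \gamma_j^{-1})\gamma_j g_1$ lie in $N_{P_{0,l(j)}} A^0_{P_{0,l(j)}}[Y(\Gamma_0)] K$; since $\gamma_j \gamma \gamma_j^{-1} \in \Gamma_0$ and $Y(\Gamma_0) \geq Y_{\Gamma_0}^0(P_{0,l(j)})$, property \eqref{FBII} for $\Gamma_0$ forces $\gamma_j \gamma \gamma_j^{-1} \in \Gamma_0 \cap P_{0,l(j)}$, whence $\gamma \in \Gamma_1 \cap P_j$. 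The converse direction is immediate: by \eqref{asGamma} applied to $\Gamma_1$, $\Gamma_1 \cap P_j = \Gamma_1 \cap N_{P_j}$ stabilizes $N_{P_j} A^0_{P_j}[Y_{P_j}] K$ on the left.

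For the second assertion, I plan to compute $y_{\Gamma_0,P_{0,l}}(\pi(x_1))$ explicitly in terms of the heights at the cusps of $X_1$ above $P_{0,l}$. Fix a lift $g_0 \in G$ of $x_1$, so $\pr_{X_0}^{-1}(\pi(x_1)) = \Gamma_0 g_0$. The cusps $P_j$ with $l(j) = l$ parametrize the double cosets $(\Gamma_0 \cap P_{0,l}) \backslash \Gamma_0 / \Gamma_1$, giving a partition
\[
\Gamma_0 = \bigsqcup_{j : l(j) = l} (\Gamma_0 \cap P_{0,l}) \, \gamma_j \, \Gamma_1.
\]
Since by \eqref{eqPara} one has $\Gamma_0 \cap P_{0,l} = \Gamma_0 \cap N_{P_{0,l}} M_{P_{0,l}}$, such elements have trivial $A_{P_{0,l}}$-component in their Iwasawa decomposition relative to $P_{0,l}$, and a direct calculation (analogous to the one for $\delta$ above) shows left multiplication by them leaves $y_{P_{0,l}}$ invariant. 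Combining the partition with the transformation formula yields
\[
y_{\Gamma_0, P_{0,l}}(\pi(x_1)) = \sup_{j : l(j) = l} t_j \cdot y_{\Gamma_1, P_j}(x_1),
\]
from which the set equality follows immediately.

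The main obstacle is the Iwasawa bookkeeping in the transformation formula and the verification that the double-coset decomposition matches the enumeration $P_1,\dots,P_{\kappa(X_1)}$ (including the non-obvious fact that every $P_{0,l}$ is $\Gamma_1$-cuspidal, so every $l$ appears as some $l(j)$); once these are in hand, the rest is formal.
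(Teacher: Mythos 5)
Your proof is correct, and the first half (the transformation formula $y_{P_{0,l(j)}}(\gamma_j g)=t_j\,y_{P_j}(g)$ and the verification of \eqref{FBII} for $\Gamma_1$ at level $t_j^{-1}Y(\Gamma_0)$) coincides with the paper's argument; your derivation of the formula via $\delta=k_{P_{0,l(j)}}^{-1}\gamma_j k_{P_j}\in P$ is in fact more explicit than the paper's appeal to the isometry of the adjoint action.

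For the second assertion your route differs from the paper's in a worthwhile way. The paper proves the two inclusions separately: the easy one directly from \eqref{eqheightfnctn}, and the harder one by enumerating the fiber $\pi^{-1}(x_0)$ explicitly as $\{\pr_{X_1}(\gamma_j^{-1}p_{i,j}g_0)\}$ (its \eqref{eqpreim}), which requires counting $\#\pi^{-1}(x_0)=[\Gamma_0:\Gamma_1]$ and invoking \eqref{FBI}, \eqref{FBII} to see the listed points are distinct — this is why the paper needs $Y$ large. You instead extract from the same double-coset decomposition $\Gamma_0=\bigsqcup_{j:l(j)=l}(\Gamma_0\cap P_{0,l})\gamma_j\Gamma_1$ (the content of the paper's \eqref{preim}) the exact identity $y_{\Gamma_0,P_{0,l}}(\pi(x_1))=\max_{j:l(j)=l}t_j\,y_{\Gamma_1,P_j}(x_1)$, from which both inclusions drop out at once; this avoids the fiber-counting (which is slightly delicate since $\Gamma_0$ may have torsion, though harmless high in the cusp) and in fact yields the set equality for every $Y>0$, not only for $Y$ sufficiently large. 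The bookkeeping points you flag — that every $P_{0,l}$ is $\Gamma_1$-cuspidal because $\Gamma_1\cap N_{P_{0,l}}$ has finite index in the lattice $\Gamma_0\cap N_{P_{0,l}}$, and that the $\gamma_j$ with $l(j)=l$ represent the double cosets because $P_{0,l}$ is self-normalizing — are exactly the facts the paper records around \eqref{preim}, so nothing essential is missing.
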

\begin{proof}
Since $P_{0,l(j)}=k_{0,l(j)}P_jk_{0,l(j)}^{-1}$, and since the adjoint action 
by $k_{0,l(j)}$ is an isometry form the Lie-algebra of $A_{P_j}$ to the 
Lie-algebra of $A_{P_{0,l(j)}}$, it follows that for
every $j=1,\dots,\kappa(X_1)$ and every $g\in G$ one has
\begin{align}\label{eqheightfnctn}
y_{P_{0,l(j)}}(\gamma_jgk_{0,l(j)}^{-1})=t_jy_{P_j}(g). 
\end{align}
This implies \eqref{eqhf}. Indeed, if $g\in G$ and $\gamma\in\Gamma_1$ satisfy
$y_{P_j}(g)>t_j^{-1}Y(\Gamma_0)$ and 
$y_{P_j}(\gamma g)>t_j^{-1}Y(\Gamma_0)$, then by \eqref{eqheightfnctn} and the
choice of $Y(\Gamma_0)$ 
one has $\gamma\in\gamma_j^{-1}(\Gamma_0\cap P_{l(j)})\gamma_j=\Gamma_0\cap
P_{j}$. 

To prove the second part of the lemma, let 
$x\in X_1-X_1(P_1,\dots,P_{\kappa(X_1)};t_1^{-1}Y,\dots,t_{\kappa(X_1)}^{-1}Y)$.
By \eqref{truncmfd} there exists $j\in\{1,\cdots,\kappa(\Gamma_1)\}$ such that
$y_{\Gamma_1,P_j}(x)>t_j^{-1}Y$. Then by \eqref{heightf} there exists $g\in G$ 
satisfying $\pr_{X_1}(g)=x$ and $y_{P_j}(g)> t_j^{-1}Y$. Now observe that 
$\pr_{X_0}(\gamma_j g k_{0,l(j)}^{-1})=x$. Using \eqref{eqheightfnctn} and
\eqref{heightf}, it follows that $y_{\Gamma_0,P_{0,l(j)}}(\pi(x))>Y$, i.e., 
$x\in \pi^{-1}(X_0-X_0(Y))$. Thus we have shown that
\begin{align}\label{preim1}
X_1-X_1(P_1,\dots,P_{\kappa(X_1)};t_1^{-1}Y,\dots,t_{
\kappa(X_1)}^{-1}Y)\subseteq \pi^{-1}(X_0-X_0(Y)).
\end{align}
It remains to prove the opposite inclusion. Fix $l\in\{1,\dots,\kappa(X_0)\}$. 
Since $P_{0,l}$ equals its normalizer in $G$, it follows that
\begin{align}\label{preim}
\#\{P_j\in\mathfrak{P}_{\Gamma_1}\colon
\gamma_jP_j\gamma_j^{-1}=P_{0,l}\}=\#\left(\Gamma_1\backslash\Gamma_0
/\Gamma_0\cap P_{0,l}\right)
\end{align}
and the $\gamma_j$ with $\gamma_jP_j\gamma_j^{-1}=P_{0,l}$ form a set of
representatives of equivalence 
classes in the double coset \eqref{preim}. 
For each $\gamma_j$ with $\gamma_jP_j\gamma_j^{-1}=P_{0,l}$ let
$\mu_{i,j}\in\Gamma_1\backslash \Gamma_0$, $i=1,\dots,r(j)$, be such that the
orbit of $\Gamma_1\gamma_j$ under the action of $\Gamma_0\cap P_{0,l}$ is given 
by the $\Gamma_1\mu_{i,j}$, $i=1,\dots,r(j)$. Then
\begin{align}\label{Frml}
[\Gamma_0:\Gamma_1]=\sum_{\substack{j\in\{1,\dots,\kappa(\Gamma_1)\}\\
\gamma_jP_j\gamma_j^{-1}=P_{0,l}}}r(j).
\end{align}
Write $\mu_{i,j}=\gamma_jp_{i,j}$ with $p_{i,j}\in \Gamma_0\cap P_{0,l} $. 
Choose $Y_{P_j}\in(0,\infty)$, $j=1,\dots,\kappa(X_1)$, such 
that \eqref{FBI} and \eqref{FBII} hold for $\Gamma_1$. Let
$Y>\max\{t_j^{-1}Y_{P_j}\colon j=1,\dots,\kappa(X_1)\}$.
Let $x_0\in X_0-X_0(Y)$. Then there exists a
$P_{0,l}\in\mathfrak{P}_{\Gamma_0}$ such that 
$y_{\Gamma_0,P_{0,l}}(x_0)>Y$. Thus there exists $g_0\in G$ such that
$x_0=\pr_{X_0}(g_0)$ and $y_{P_{0,l}}(g_0)>Y$. By \eqref{eqPara}
one has $y_{P_{0,l}}(p_{i,j}g_0)>Y$. We claim that
\begin{align}\label{eqpreim}
\pi^{-1}(x_0)=\left\{\pr_{X_1}(\gamma_j^{-1}p_{i,j}g_0)\colon
\gamma_jP_j\gamma_j^{-1}=P_{0,l}, \colon i=1,\dots, r(j)\right\}.
\end{align}
Obviously,  each 
$\pr_{X_1}(\gamma_j^{-1}p_{i,j}g_0)$ is contained in $\pi^{-1}(x_0)$. On the
other 
hand, assume that
$\pr_{X_1}(\gamma_j^{-1}p_{i,j}g_0)=\pr_{X_1}(\gamma_{j'}^{-1}p_{i',j'}
g_0)=:x_1$, where
$\gamma_jP_j\gamma_j^{-1}=P_{0,l}=\gamma_{j'}P_{j'}\gamma_{j'}^{-1}$.
By \eqref{eqPara} and
\eqref{eqheightfnctn} one obtains 
\begin{align}\label{eqpreim2}
y_{\Gamma_1,P_j}(x_1)>t_j^{-1}Y>Y_{P_j},\quad
y_{\Gamma_1,P_j'}(x_1)>t_{j'}^{-1}Y>Y_{P_{j'}}. 
\end{align}
Applying \eqref{FBI}, \eqref{FBII} one
obtains $j=j'$ and hence $i=i'$.                                                
Thus, since $\#\{\pi^{-1}(x_0)\}=[\Gamma_0:\Gamma_1]$, \eqref{eqpreim} follows
from \eqref{Frml}.
Applying \eqref{eqpreim} and \eqref{eqpreim2} one obtains
\[
\pi^{-1}(X_0-X_0(Y))\subseteq X_1-X_1(P_1,\dots,P_{\kappa(X_1)}
;t_1^{-1}Y,\dots,t_{
\kappa(X_1)}^{
-1}
Y). 
\]
and together with \eqref{preim1} the lemma follows.
\end{proof}

Let $\Delta_{X_1,p}(\tau)$ be the Laplace
operator on $E_\tau$-valued $p$-forms on $X_1$.
Using the preceding Lemma, we can give an invariant definition 
of the regularized trace of $e^{-t\Delta_{X_1,p}(\tau)}$ provided 
the set $\mathfrak{P}_{\Gamma_0}$ is fixed. 
We fix a set $\mathfrak{P}_{\Gamma_1}$ of representatives of 
$\Gamma_1$-cuspidal parabolic subgroups of $G$. 
Then by Lemma \ref{Lemheightfnctn} we have
\begin{align}\label{Eqxone}
\int_{\pi^{-1}(X_0(Y))}\Tr
K_{X_1}^{\tau,p}(t;x,x)dx=\int_{X_1(P_1,\dots,P_{\kappa(X_1)}
;t_1^{-1}Y,\dots,t_{
\kappa(X_1)}^{
-1}
Y)}\Tr K_{X_1}^{\tau,p}(t;x,x)dx.
\end{align}
Now arguing exactly as in \cite[section 5]{MP2} and applying Lemma
\ref{Lemheightfnctn}, we obtain
\begin{equation*}
\begin{split}
&\int_{\pi^{-1}X_0(Y)}\Tr K^{\tau,p}_{X_1}(t;x,x)dx\,dx=
\sum_{\substack{\sigma\in\hat{M}\\
\left[\nu_p(\tau):\sigma\right]\neq 0}}
\sum_{P_j\in\mathfrak{P}_{\Gamma_1}}\frac{
e^{-t(\tau(\Omega)-c(\sigma))}\dim(\sigma)\log{(t_j^{-1}Y)}}{\sqrt{4\pi t}}
+\sum_j e^{-t\lambda_j}\\
&+\sum_{\substack{\sigma\in\hat{M};\sigma=w_0\sigma\\
\left[\nu_p(\tau):\sigma\right]\neq
0}}e^{-t(\tau(\Omega)-c(\sigma))}\frac{\Tr(\widetilde{\boldsymbol{C}}(\sigma,\nu
,0))}{4}\\
&-\frac{1}{4\pi}\sum_{\substack{\sigma\in\hat{M}\\
\left[\nu:\sigma\right]\neq
0}}\int_{\R}e^{-t\left(\lambda^2+\tau(\Omega)-c(\sigma)\right)}
\Tr\left(\widetilde{\boldsymbol{C}}
(\sigma,\nu,-i\lambda)\frac{
d}{dz}\widetilde{\boldsymbol{C}}(\sigma,\nu,i\lambda)\right)\,d\lambda+o(1),
\end{split}
\end{equation*}
as $Y\to\infty$. Here the $\lambda_j$ in 
the first row are the eigenvalues of $\Delta_{X_1,p}(\tau)$, counted with 
multiplicity. 
It follows that the integral on the left-hand side of
\eqref{Eqxone} admits 
an asymptotic expansion in $Y$ as $Y$ goes to infinity. Note that, since 
the factor
factor $\tau(\Omega)$ comes from equation \eqref{kuga}, the last equation 
coincides with \cite[equation 5.7]{MP2} up to the 
occurrence of the $t_j$'s in the first sum. This occurrence is caused by the
different 
choices of truncation parameters. The appearance
of the $t_j$'s is exactly the reason why the above integral is independent of 
the choice of $\mathfrak{P}_{\Gamma_1}$ and depends only on the choice of
$\mathfrak{P}_{\Gamma_0}$. 

We assume from now on that the set $\mathfrak{P}_{\Gamma_0}$ is fixed. By the
above considerations we are let to the following definition of the 
regularized trace of the heat operator for finite coverings of $X_0$.

\begin{defn}\label{Defregtr}
Let $X_1=\Gamma_1\backslash\widetilde{X}$ be a finite 
covering of $X_0$ and assume that $\Gamma_1$ is torsion free 
and satisfies \eqref{asGamma}. Let $\Delta_{X_1,p}(\tau)$ be the Laplace
operator on $E_\tau$-valued $p$-forms on $X_1$. For any choice of a set
 $\mathfrak{P}_{\Gamma_1}$ of representatives of $\Gamma_1$-cuspidal parabolic 
subgroups we put
\begin{equation}\label{regtrace}
\begin{aligned}
 &\Tr_{\reg;X_1}(e^{-t\Delta_{X_1,p}(\tau)}):=-\sum_{\substack{\sigma\in\hat{M}
\\
\left[\nu_p(\tau):\sigma\right]\neq 0}}
\sum_{P_j\in\mathfrak{P}_{\Gamma_1}}\frac{
e^{-t(\tau(\Omega)-c(\sigma))}\dim(\sigma)\log{(t_j)}}{\sqrt{4\pi t}}
\\ &+\sum_j e^{-t\lambda_j}
+\sum_{\substack{\sigma\in\hat{M};\sigma=w_0\sigma\\
\left[\nu_p(\tau):\sigma\right]\neq
0}}e^{-t(\tau(\Omega)-c(\sigma))}\frac{\Tr(\widetilde{\boldsymbol{C}}(\sigma,\nu
,0))}{4}\\
&-\frac{1}{4\pi}\sum_{\substack{\sigma\in\hat{M}\\
\left[\nu_p(\tau):\sigma\right]\neq
0}}e^{-t(\tau(\Omega)-c(\sigma))}\int_{\R}e^{-t\lambda^2}
\Tr\left(\widetilde{\boldsymbol{C}}
(\sigma,\nu_p(\tau),-i\lambda)\frac{
d}{dz}\widetilde{\boldsymbol{C}}(\sigma,\nu,i\lambda)\right)\,d\lambda,
\end{aligned}
\end{equation}
where the notation is as above.
\end{defn}

If one expresses $\Tr_{\reg;X_1}(e^{-t\Delta_{X_1,p}(\tau)})$ using the
geometric side 
of the trace formula, then it 
becomes again transparent that the summands $\log{t_j}$ compensate
the ambiguity caused by the choice of $\mathfrak{P}_{\Gamma_1}$ so that 
$\Tr_{\reg;X_1}(e^{-t\Delta_{X_1,p}(\tau)})$ depends
only on the choice of $\mathfrak{P}_{\Gamma_0}$. For further details we refer 
the reader to  section \ref{geoms}, in particular to equations \eqref{Trfrml} 
and \eqref{Snew}. 

\section{Exponential decay of the regularized trace for large 
time}\label{secexpdec}
\setcounter{equation}{0}
In this section we 
estimate the regularized trace  for large time and with respect to coverings. 
Let $\Gamma_0$ be a lattice in $G$ and  put 
$X_0=\Gamma_0\backslash\widetilde{X}$. 
Let $X_1=\Gamma_1\backslash \widetilde{X}$ be a finite covering of $X_0$ such 
that $\Gamma_1$ is torsion-free and satisfies \eqref{asGamma}. We assume that 
a set $\mathfrak{P}_{\Gamma_0}$ of representatives of $\Gamma_0$-cuspidal 
parabolic subgroups is fixed. We define the regularized trace according
to Definition \ref{Defregtr}. 
To begin with we establish  the following lemma.
\begin{lem}\label{Lemint}
For every $\sigma\in (0,\infty)$ one has
\begin{align*}
\int_{\R}\frac{\sigma}{\sigma^2+\lambda^2}e^{-t\lambda^2}d\lambda=\sqrt{4\pi
t}\;e^{t\sigma^2}\int_{\sigma}^\infty e^{-tu^2}du. 
\end{align*}
\end{lem}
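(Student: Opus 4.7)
My plan is to reduce the integral to a Gaussian integral by an integral representation of the Lorentzian factor, then complete the square and change variables.

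First I would use the elementary identity
\[
\frac{\sigma}{\sigma^2+\lambda^2}=\sigma\int_0^\infty e^{-s(\sigma^2+\lambda^2)}\,ds,
\]
valid for $\sigma>0$, and apply Fubini (justified since everything is positive) to rewrite the left-hand side as
\[
\sigma\int_0^\infty e^{-s\sigma^2}\!\!\int_{\R}e^{-(s+t)\lambda^2}\,d\lambda\,ds
=\sigma\sqrt{\pi}\int_0^\infty \frac{e^{-s\sigma^2}}{\sqrt{s+t}}\,ds.
\]

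Next I would substitute $u=\sqrt{s+t}$, so that $ds=2u\,du$ and $s=u^2-t$. This transforms the integral into
\[
2\sigma\sqrt{\pi}\,e^{t\sigma^2}\int_{\sqrt{t}}^\infty e^{-u^2\sigma^2}\,du,
\]
after which the substitution $v=u\sigma$ (or equivalently, rescaling back to the variable $u$ with $u^2\to tu^2$) yields
\[
2\sqrt{\pi}\,e^{t\sigma^2}\int_{\sigma\sqrt{t}}^\infty e^{-v^2}\,dv
=\sqrt{4\pi t}\,e^{t\sigma^2}\int_\sigma^\infty e^{-tu^2}\,du,
\]
where the last equality is the substitution $v=u\sqrt{t}$. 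This matches the desired expression.

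The computation is entirely routine; there is no real obstacle. The only point requiring a small amount of care is the interchange of integrals, but since the integrand is a product of non-negative factors Fubini--Tonelli applies without issue, and the convergence of all the intermediate Gaussian integrals is immediate from $\sigma>0$ and $t>0$.
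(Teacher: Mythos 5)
Your proof is correct. Every step checks out: the representation $\frac{\sigma}{\sigma^2+\lambda^2}=\sigma\int_0^\infty e^{-s(\sigma^2+\lambda^2)}\,ds$ is valid for $\sigma>0$, Tonelli applies since the integrand is non-negative, and the chain of substitutions $u=\sqrt{s+t}$, $v=u\sigma$, $v=u\sqrt{t}$ lands exactly on the stated right-hand side. However, your route is genuinely different from the paper's. The paper sets $f(\sigma)$ equal to the left-hand side, rescales $\lambda\mapsto\sigma\lambda$ to get $f(\sigma)=\int_\R e^{-t\sigma^2\lambda^2}(1+\lambda^2)^{-1}\,d\lambda$, differentiates in $\sigma$ to derive the linear ODE $f'(\sigma)=-\sqrt{4\pi t}+2t\sigma f(\sigma)$, writes down the general solution, and pins down the integration constant using $\lim_{\sigma\to\infty}f(\sigma)=0$. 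Your argument is more direct and self-contained: it needs only Tonelli and the Gaussian integral, with no differentiation under the integral sign, no ODE, and no asymptotic boundary condition to verify. The paper's ODE method is a standard trick that avoids introducing the auxiliary $s$-integral, but in exchange it requires justifying the differentiation and the limit at infinity. Both are complete proofs of the same identity; yours is arguably the more elementary of the two.
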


\begin{proof}
Put
\[
f(\sigma):=\int_{\R}\frac{\sigma}{\sigma^2+\lambda^2}e^{-t\lambda^2}
d\lambda=\int_\R e^{-t\sigma^2\lambda^2}\frac{1}{1+\lambda^2}d\lambda.
\] 
Then 
\begin{align*}
f'(\sigma)=&-2t\sigma\int_\R
e^{-t\sigma^2\lambda^2}\frac{\lambda^2}{1+\lambda^2}
d\lambda=-2t\sigma\left(\int_\R
e^{-t\sigma^2\lambda^2}d\lambda-\int_\R
e^{-t\sigma^2\lambda^2}\frac{1}{1+\lambda^2}d\lambda\right)\\
&=-\sqrt{4\pi t}+2t\sigma f(\sigma).
\end{align*}
The general solution of this differential equation on $(0,\infty)$ is given by
\begin{align*}
y(\sigma)=e^{t\sigma^2}\left(\sqrt{4\pi t}\int_{\sigma}^\infty
e^{-tu^2}du+C\right)
\end{align*}
and since $f$ satisfies $\lim_{\sigma\to\infty}f(\sigma)=0$, the Lemma follows.
\end{proof}

The following proposition is our main result concerning the large time 
estimation of the regularized trace of the heat kernel.

\begin{prop}\label{estimregtr}
Let $\tau$ be such that $\tau_\theta\not\cong\tau$. There exist constants 
$C,c>0$ such that for all  finite covers $X_1$  of $X_0$ one has
\[
\left|\Tr_{\reg;X_1}\left(e^{-t\Delta_{X_1,p}(\tau)}\right)\right|\le 
C e^{-ct}(\Tr_{\reg;X_1}(e^{-\Delta_{X_1,p}(\tau)})+\vol(X_1))
\]
for $t\ge 10$.
\end{prop}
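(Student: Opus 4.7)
The plan is to split $\Tr_{\reg;X_1}(e^{-t\Delta_{X_1,p}(\tau)})$ into the four summands on the right of \eqref{regtrace} and to bound each one, for $t\geq 10$, by $Ce^{-ct}$ times either the corresponding summand at $t=1$ or a constant multiple of $\vol(X_1)$. Two positivity inputs underlie the argument: every eigenvalue $\lambda_j$ of $\Delta_{X_1,p}(\tau)$ is $\geq 1/4$ by Corollary \ref{Korspec}, and the spectral analysis of Section \ref{secbol} also yields $\tau(\Omega)-c(\sigma)\geq c_0>0$ uniformly over all $\sigma\in\hat{M}$ with $[\nu_p(\tau):\sigma]\neq 0$. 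The first immediately gives the estimate $\sum_j e^{-t\lambda_j}\leq e^{-(t-1)/4}\sum_j e^{-\lambda_j}$ for the discrete-spectrum term; the second makes the prefactor $e^{-t(\tau(\Omega)-c(\sigma))}$ appearing in the other three terms exponentially small, of size $e^{-c_0 t}$.

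I would dispose next of the two purely exponential cusp contributions, namely the $\log t_j$ term and the $\Tr(\widetilde{\boldsymbol{C}}(\sigma,\nu,0))$ term. Both have identical shape at times $t$ and $1$, differing only by the factor $e^{-(t-1)(\tau(\Omega)-c(\sigma))}$ (with an additional $t^{-1/2}$ for the first); writing this factor as $e^{-c_0(t-1)/2}\cdot e^{-c_0(t-1)/2}$ and bounding the second copy by a constant gives at once a bound of each term by $e^{-ct}$ times its $t=1$ value. Moreover \eqref{FG} forces $\mathbf{C}(\sigma,0)$ to be unitary, so $|\Tr(\widetilde{\boldsymbol{C}}(\sigma,\nu,0))|\leq\dim(\boldsymbol{\mathcal{E}}(\sigma,\nu)\otimes V_\nu)^K\leq C\kappa(X_1)\leq C\vol(X_1)$, providing the absolute bound that the additive $\vol(X_1)$ summand on the right-hand side is designed to absorb.

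The main obstacle is the scattering-matrix integral. Using Lemma \ref{LemC} and differentiating the factorization from Theorem \ref{ThrmC} logarithmically, I would rewrite $\Tr(\widetilde{\boldsymbol{C}}^{-1}\widetilde{\boldsymbol{C}}')$ at $\lambda=i\mu$ as $(\dim\nu)^{-1}$ times
\[
\log q\;+\;\sum_{j}\frac{2\sigma_j}{\mu^2+\sigma_j^2}\;+\;\sum_{\eta}\Bigl(\tfrac{1}{i\mu+\bar\eta}-\tfrac{1}{i\mu-\eta}\Bigr).
\]
The $\log q$-piece integrates to $\sqrt{\pi/t}\,\log q$, and \eqref{Repq} combined with Lemma \ref{Lemheightfnctn} yields $|\log q|\leq C(\kappa(X_1)+\sum_{j}|\log t_j|)$, a quantity controlled by $\vol(X_1)$ plus the magnitude of the $\log t_j$ contribution already present in $\Tr_{\reg;X_1}(e^{-\Delta_{X_1,p}(\tau)})$. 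Each residual-pole integral $\int_{\R}\tfrac{2\sigma_j}{\mu^2+\sigma_j^2}e^{-t\mu^2}d\mu$ is bounded using Lemma \ref{Lemint} together with the Gaussian tail estimate $\int_{\sigma}^\infty e^{-tu^2}du\leq(2t\sigma)^{-1}e^{-t\sigma^2}$ by $O(t^{-1/2})$ for $\sigma_j$ bounded away from $0$, and by the trivial bound $2\pi$ otherwise; together with the overall factor $e^{-t(\tau(\Omega)-c(\sigma))}$ this produces the required exponential decay. The scattering-pole sum is handled identically after noting that $\Re\eta\leq 0$ keeps the singularities off the imaginary axis, and the relevant pole counts are bounded by $\bar{d}(\sigma,\nu)\kappa(X_1)\leq C\vol(X_1)$.

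Summing over the finitely many $\sigma\in\hat{M}$ with $[\nu_p(\tau):\sigma]\neq 0$ and choosing $c>0$ strictly smaller than $\min(1/4,c_0)$ produces the claimed inequality. The hardest step is the uniform control, across the tower of covers, of the $\log q$-contribution to the scattering integral; this is precisely the role played by the refined version of the factorization of $\det\overline{\mathbf{C}}$ established as Theorem \ref{ThrmC} in Section \ref{secfact}.
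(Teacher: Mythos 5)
Your skeleton is the paper's: split \eqref{regtrace} into its four summands, use Corollary \ref{Korspec} and $\tau(\Omega)-c(\sigma)\ge 1/4$ for positivity, Lemma \ref{Lemint} for the residual poles, $\kappa(X_1)\le C\vol(X_1)$, and Theorem \ref{ThrmC} to expand the scattering integral. Your treatment of the eigenvalue sum and of $\Tr(\widetilde{\boldsymbol{C}}(\sigma,\nu,0))$ is correct. But your handling of the scattering integral has two genuine gaps, both stemming from the decision to bound the problematic pieces in absolute value rather than exploiting signs and cancellations. First, the constant $\log q$ from Theorem \ref{ThrmC} and the counterterm $-\sum_j\log t_j$ in Definition \ref{Defregtr} cannot be estimated separately. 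Theorem \ref{ThrmC} only asserts $a\le 0$, i.e.\ a one-sided bound on $\log q$, so your inequality $|\log q|\le C(\kappa(X_1)+\sum_j|\log t_j|)$ is not available; and even granting it, $\sum_j|\log t_j|$ is \emph{not} uniformly $O(\vol(X_1))$ over all covers, nor can its ``magnitude'' be read off from the single real number $\Tr_{\reg;X_1}(e^{-\Delta_{X_1,p}(\tau)})$, inside which these signed terms may cancel against others. The paper's proof avoids this entirely: inserting $Y_{P_j}=t_j^{-1}Y(\Gamma_0)$ into \eqref{Repq}, the $\log t_j$ part of $\log q$ cancels \emph{exactly} against the $-\log t_j$ counterterm of \eqref{regtrace}, leaving only $a(\sigma,\nu)\le 0$ (handled by its sign) and a term of size $\kappa(\Gamma_1)(Y(\Gamma_0)+2)\le C\vol(X_1)$. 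This cancellation is the whole point of the $\log t_j$ counterterm, and missing it is fatal to the uniformity in $X_1$.

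Second, your claim that ``the relevant pole counts are bounded by $\bar d(\sigma,\nu)\kappa(X_1)$'' is false for the poles $\eta$ with $\Real(\eta)<0$: the Weierstrass product in Theorem \ref{ThrmC} runs over the infinitely many scattering poles in the left half-plane, so no counting argument can work there (the dimension bound applies only to the finitely many poles in $(0,n]$). The correct mechanism is again sign-definiteness: since $\Real(\eta)<0$, each term $\Real(\eta)/(\Real(\eta)^2+(\lambda-\Iim(\eta))^2)$ has a fixed sign, so the entire $\eta$-contribution at time $t$ is dominated by $e^{-c(t-1)}$ times its value at $t=1$, and that $t=1$ value is itself a summand of $\Tr_{\reg;X_1}(e^{-\Delta_{X_1,p}(\tau)})$. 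This points to the bookkeeping you are missing throughout: to arrive at the stated right-hand side one must group the summands into pieces $I_k$ satisfying $|I_k(t)|\le Ce^{-ct}\bigl(I_k(1)+C'\vol(X_1)\bigr)$ with $\sum_k I_k(1)=\Tr_{\reg;X_1}(e^{-\Delta_{X_1,p}(\tau)})$, which in turn requires each $I_k(1)\ge -C'\vol(X_1)$. Bounding individual terms by the absolute values of their $t=1$ counterparts, as you propose, does not reassemble into the regularized trace at $t=1$ plus $O(\vol(X_1))$.
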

\begin{proof}
Let $\overline{\boldsymbol{C}}(\sigma,\nu_p(\tau),s)$ be as in \eqref{intbar}.
For each $\sigma\in\hat{M}$ one has $c(\sigma)=c(w_0\sigma)$. Thus 
by Lemma \ref{LemC} the last line of \ref{regtrace} can be rewritten as
\[
-\frac{1}{4\pi\dim(\nu_p(\tau))}\sum_{\substack{\sigma\in\hat{M}/W(A)\\
\left[\nu_p(\tau):\sigma\right]\neq
0}}e^{-t(\tau(\Omega)-c(\sigma))}\int_{\R}e^{-t\lambda^2}
\Tr\left(\overline{\boldsymbol{C}}
(\sigma,\nu_p(\tau),-i\lambda)\frac{
d}{dz}\overline{\boldsymbol{C}}(\sigma,\nu,i\lambda)\right)\,d\lambda. 
\]
We have
\[
\Tr\left(\overline{\boldsymbol{C}}
(\sigma,\nu_p(\tau),-i\lambda)\frac{
d}{dz}\overline{\boldsymbol{C}}(\sigma,\nu_p(\tau),i\lambda)\right)=\frac{d}{dz}
\log\det\overline{\boldsymbol{C}}(\sigma,\nu_p(\tau),i\lambda).
\]
Let $\sigma_1,\dots,\sigma_l\in(0,n]$, $n=(d-1)/2$, be the poles of 
$\det \overline{\boldsymbol{C}}(\sigma,\nu_p(\tau),s)$ in the 
half-plane $\Real(s)\ge 0$. Poles occur only if $\sigma=w_0\sigma$. Let
$\eta$ run over the poles of $\det
\overline{\boldsymbol{C}}(\sigma,\nu_p(\tau),s)$ in the
half-plane $\Real(s)<0$, both counted with
multiplicity. For $\sigma\in\hat M$, put
\begin{equation}
\overline{\sigma}=\begin{cases}\sigma,& \sigma=w_0\sigma;\\
\sigma\oplus w_0\sigma,& \sigma\neq w_0\sigma.
\end{cases}
\end{equation}
Let $Y(\Gamma_0)$ be as in \eqref{YGamma}. By Lemma 
\ref{Lemheightfnctn} we have $t_j^{-1}Y(\Gamma_0)\ge Y^0_{P_j}(\Gamma_1)$ for
$j=1,\cdots,\kappa(\Gamma_1)$. Using Theorem \ref{ThrmC} and \eqref{dim1} we get
\begin{align*}
&\frac{1}{\dim(\nu_p(\tau))}
\Tr\left(\overline{\boldsymbol{C}}
(\sigma,\nu_p(\tau),-i\lambda)\frac{
d}{dz}\overline{\boldsymbol{C}}(\sigma,\nu_p(\tau),
i\lambda)\right)\\ =&2\dim(\overline{
\sigma})\left(-\sum_{j=1}^{\kappa(\Gamma_1)}\log{t_j}
+(Y(\Gamma_0)+2)\kappa(\Gamma_1)\right)
\\&+a(\sigma,\nu)+\frac{1}{\dim(\nu_p(\tau))}\left(\sum_{j=1}^l\frac{2\sigma_j}{
\lambda^2+\sigma_j^2}+
\sum_{\eta}\frac{2\Real(\eta)}{(\lambda-\Iim(\eta))^2+\Real(\eta)^2}\right),
\end{align*}
where $a(\sigma,\nu)\in\R$, $a(\sigma,\nu)\leq 0$. 
Let $\sigma_{pp}(\Delta_{X_1,p}(\tau))$ denote the pure point spectrum
of $\Delta_{X_1,p}(\tau)$. Then $\sigma_{pp}(\Delta_{X_1,p}(\tau))$ is the union
of the cuspidal spectrum
$\sigma_{cusp}(\Delta_{X_1,p}(\tau))$
and the residual spectrum $\sigma_{res}(\Delta_{X_1,p}(\tau))$. For a given
eigenvalue  $\lambda\in
\sigma_{pp}(\Delta_{X_1,p}(\tau))$, let $m(\lambda)$ denote its multiplicity.
Put
\begin{align*}
I_1(t,\nu_p(\tau)):=\sum_{\lambda\in\sigma_{cusp}(\Delta_{X_1,p}(\tau))}
m(\lambda)e^{
-t\lambda},
\end{align*}

\begin{equation*}
\begin{split}
I_2(t,\nu_p(\tau)):=&\sum_{\lambda\in\sigma_{res}(\Delta_{X_1,p}(\tau))}
m(\lambda)e^{-t\lambda}\\
&-\frac{1}{2\pi\dim\nu_p(\tau)}\sum_{\substack{\sigma\in\hat
{M};\sigma=w_0\sigma\\ [\nu_p(\tau):\sigma]\neq
0}}\sum_{j=1}^le^{-t(\tau(\Omega-c(\sigma)))}\int_\R
e^{-t\lambda^2}\frac{\sigma_j}{\lambda^2+\sigma_j^2}d\lambda,
\end{split}
\end{equation*}

\begin{equation*}
\begin{split}
I_3(t,\nu_p(\tau)):=-\sum_{\substack{\sigma\in\hat{M}/W(A)\\
[\nu_p(\tau):\sigma]\neq
0}}&e^{-t(\tau(\Omega)-c(\sigma))}\biggl(\frac{a(\sigma,\nu)}{4\sqrt{\pi
t}}+\frac{1}{\sqrt{4\pi t}}\kappa(\Gamma_1)\dim(\bar{\sigma}
)(Y(\Gamma_0)+2)\\
&+\frac{1}{2\pi\dim(\nu_p(\tau))}\int_\R e^{-t\lambda^2}
\sum_{\eta}\frac{\Real(\eta)}{\Real(\eta)^2+(\lambda-\Iim(\eta))^2}\;d\lambda
\biggr),
\end{split}
\end{equation*}
and
\begin{align*}
I_4(t,\nu_p(\tau)):=\sum_{\substack{\sigma\in\hat{M};\sigma=w_0\sigma\\
\left[\nu_p(\tau):\sigma\right]\neq
0}}e^{-t(\tau(\Omega)-c(\sigma))}\frac{\Tr(\widetilde{\boldsymbol{C}}(\sigma,\nu
,0))}{4}.
\end{align*}
Then it follows from \eqref{regtrace} that we have 
\begin{align}\label{eqI0}
\Tr_{\reg;X_1}(e^{-t\Delta_{X_1,p}(\tau)})=I_1(t,\nu_p(\tau))+I_2(t,
\nu_p(\tau))+I_3(t,
\nu_p(\tau))+I_4(t,\nu_p(\tau)).
\end{align}

To estimate $I_1(t,\nu_p(\tau))$ we apply Corollary \ref{Korspec}. It follows
that for $t\geq 2$ we have
\begin{align}\label{eqI1}
|I_1(t,\nu_p(\tau))|\leq e^{-\frac{t}{8}}I_1(1,\nu_p(\tau)).
\end{align}
To deal with $I_2(t,\nu_p(\tau))$ observe that to each
$\lambda_j\in\sigma_{res}(\Delta_p(\tau))$ there correspond 
a $\sigma\in\hat{M}$ satisfying $\sigma=w_0\sigma$ and 
$[\nu_p(\tau):\sigma]\neq 0$, and a pole $\sigma_j$ of 
$\det C(\sigma,\nu_p(\tau),s)$ in $(0,n]$ such that
\begin{align}
\lambda_{j}=-\sigma_j^2+\tau(\Omega)-c(\sigma).
\end{align}
Moreover, the multiplicity of $\sigma_j$ divided by $\dim(\nu_p(\tau))$ equals 
the multiplicity of the eigenvalue $\lambda_j$. 
Let $\mu_j$ be the sequence of the $\sigma_j$'s, where the multiplicity 
of each $\mu_j$ is the multiplicity of $\sigma_j$ divided by
$\dim(\nu_p(\tau))$. Put
\[
h_{\mu_j}(t):=1-\frac{\sqrt{t}}{
\sqrt {\pi}}\int_{ \mu_j}^\infty e^{-tu^2}du=1-\frac{1}{
\sqrt{\pi}}\int_{
\sqrt{t}\mu_j}^\infty e^{-u^2}du.
\]
Using Lemma \ref{Lemint}, we get
\begin{align*}
I_2(t,\nu_p(\tau))=&\sum_{\substack{\sigma\in\hat
{M};\sigma=w_0\sigma\\ [\nu_p(\tau):\sigma]\neq
0}}e^{-t(\tau(\Omega)-c(\sigma))}\sum_{j}\left(e^{t\mu_j^2}-\frac{1}{2\pi}
\int_\R
e^{-t\lambda^2}\frac{\mu_j}{\lambda^2+\mu_j^2}
d\lambda\right)\\
&=\sum_{\substack{\sigma\in\hat
{M};\sigma=w_0\sigma\\ [\nu_p(\tau):\sigma]\neq
0}}\sum_{j}e^{-t(\tau(\Omega)-c(\sigma)-\mu_j^2)}h_{\mu_j}(t). 
\end{align*}
Now observe that $1\geq h_{\mu_j}(t)\geq \frac{1}{2}$. Moreover, by Corollary 
\ref{Korspec} it follows that for every $\mu_j$ we have
\begin{align}\label{eqsigma}
-\mu_j^2+\tau(\Omega)-c(\sigma)\geq\frac{1}{4}.
\end{align}
Thus for each $t\geq 10$ we get
\begin{align}\label{eqI2}
|I_2(t,\nu_p(\tau))|&\leq e^{-\frac{t}{8}}\sum_{\substack{\sigma\in\hat
{M};\sigma=w_0\sigma\\ [\nu_p(\tau):\sigma]\neq
0}}\sum_{j}e^{-\frac{t(\tau(\Omega)-c(\sigma)-\mu_j^2)}{2}}h_{\mu_j}
(t)\nonumber\\
&\leq e^{-\frac{t}{8}}\sum_{\substack{\sigma\in\hat
{M};\sigma=w_0\sigma\\ [\nu_p(\tau):\sigma]\neq
0}}\sum_{j}e^{-(\tau(\Omega)-c(\sigma)-\mu_j^2)} e^{-1}h_{\mu_j}(t)\nonumber\\
&\leq e^{-\frac{t}{8}}\sum_{\substack{\sigma\in\hat
{M};\sigma=w_0\sigma\\ [\nu_p(\tau):\sigma]\neq
0}}\sum_{j}e^{-(\tau(\Omega)-c(\sigma)-\mu_j^2)}
h_{\mu_j}(1)=e^{-\frac{t}{8}}I_2(1,\nu_p(\tau)).
\end{align}

Next we deal with $I_3(t,\nu_p(\tau))$. By \cite[Lemma 7.1]{MP2} we have
\begin{align}\label{LemMP}
\tau(\Omega)-c(\sigma)\geq\frac{1}{4}
\end{align}
for all $\sigma\in\hat{M}$ with $[\nu_p(\tau):\sigma]\neq 0$.  
Then since $a(\sigma,\nu)\leq 0$, $\Real(\eta)<0$, for each 
$t\geq 2$ we can estimate
\begin{align*}
|I_3(t,\nu_p(\tau))|& \leq
e^{-\frac{t}{8}}\sum_{\substack{\sigma\in\hat{M}/W(A)\\
[\nu_p(\tau):\sigma]\neq
0}}e^{-(\tau(\Omega)-c(\sigma))}\dim(\bar{\sigma}
)\kappa(\Gamma_1)(Y(\Gamma_0)+2)\frac{1}{\sqrt{4\pi }}
\\&-e^{-\frac{t}{8}}\sum_{\substack{\sigma\in\hat{M}/W(A)\\
[\nu_p(\tau):\sigma]\neq
0}}e^{-(\tau(\Omega)-c(\sigma))}\biggl(\frac{a(\sigma,\nu)}{4\sqrt{\pi}}\\
&\hskip30pt+\frac{1}{2\pi\dim(\nu_p(\tau))}\int_\R e^{-\lambda^2}
\sum_{\eta}\frac{\Real(\eta)}{\Real(\eta)^2+(\lambda-\Iim(\eta))^2}
d\lambda\biggr)\\
&=2e^{-\frac{t}{8}}\sum_{\substack{\sigma\in\hat{M}/W(A)\\
[\nu_p(\tau):\sigma]\neq
0}}e^{-(\tau(\Omega)-c(\sigma))}\dim(\bar{\sigma}
)(Y(\Gamma_0)+2)\kappa(\Gamma_1)\frac{1}{\sqrt{4\pi}}+e^{-\frac{t}{8}}I_3(1,
\nu_p(\tau)).
\end{align*}
By \cite[Proposition 3.3]{Ke} there exists $C(d)>0$ such that 
\begin{equation}\label{volume}
\kappa(X)\le C(d)\vol(X)
\end{equation}
for all complete hyperbolic manifolds of finite volume and dimension $d$.
Thus for each $t\geq 2$ we obtain
\begin{align}\label{eqI3}
|I_3(t,\nu_p(\tau))|\leq e^{-\frac{t}{8}}(I_3(1,\nu_p(\tau))+C_2\vol(X_1)),
\end{align}
where $C_2$ depends only on $\Gamma_0$ and $\mathfrak{P}_{\Gamma_0}$. 
To estimate $I_4(t,\nu_p(\tau))$ we recall that 
$\widetilde{\boldsymbol{C}}(\sigma,\nu,0)^2=\Id$. Hence there exist natural 
numbers $c_1(\Gamma,\sigma,\nu)$, $c_2(\Gamma,\sigma,\nu)$ such that 
\begin{align*}
c_1(\Gamma_1,\sigma,\nu)+c_2(\Gamma_1,\sigma,\nu)=\dim\left(\boldsymbol{\mathcal
{E}}
(\sigma,\nu)\otimes V_\nu\right)^K=\kappa(X_1)\dim(\sigma),
\end{align*}
and 
\begin{align*}
\Tr(\widetilde{\boldsymbol{C}}(\sigma,\nu
,0))=c_1(\Gamma_1,\sigma,\nu)-c_2(\Gamma_1,\sigma,\nu).
\end{align*}
Using \eqref{LemMP} and \eqref{volume} we obtain for $t\geq 2$:
\begin{align}\label{eqI4}
&|I_4(t,\nu_p(\tau))|\nonumber\\ &\leq
e^{-\frac{t}{8}}(I_4(1,\nu_p(\tau))+2c_2(\Gamma_1,\sigma,\nu))\leq
e^{-\frac{t}{8}}(I_4(1,\nu_p(\tau))+2C(d)\dim(\sigma)\vol(X_1)). 
\end{align}
Combining \eqref{eqI0}, \eqref{eqI1}, \eqref{eqI2}, \eqref{eqI3} and
\eqref{eqI4}, the 
proof of the proposition is complete.
\end{proof}

\section{Geometric side of the trace formula}\label{geoms}
\setcounter{equation}{0}

To study the behaviour of the analytic torsion under coverings we will apply
the trace formula to the regularized trace of the heat operator. In this 
section we recall the structure of the geometric side of 
the trace formula and study the parabolic contribution.

Let the assumptions be the same as at the beginning of the previous section. 
Let $\tau\in\Rep(G)$ and assume that $\tau\neq\tau_\theta$. Let 
$\widetilde E_\tau$ be the homogeneous vector bundle over $\widetilde X=G/K$,
associated to $\tau|_K$, equipped with an admissible Hermitian metric (see  
section \ref{secbol}). Let $\widetilde\Delta_p(\tau)$ be the Laplace operator
on $\widetilde E_\tau$-valued $p$-forms. The on $C^\infty(G,\nu_p(\tau))$ one
has 
\begin{align}\label{FlatH}
\widetilde\Delta_p(\tau)=-\Omega+\tau(\Omega), 
\end{align}
see \cite[(6.9)]{MtM}
Let
\begin{equation}\label{heatker}
H^{\tau,p}_t\colon G\to\End(\Lambda^p\pL^*\otimes V_\tau)
\end{equation}
be the kernel of the heat operator $e^{-t\widetilde \Delta_p(\tau)}$. Let 
\begin{equation}\label{tr-heatker}
h_t^{\tau,p}=\tr H_t^{\tau,p}.
\end{equation} 
We apply the
trace 
formulas in \cite[section 6]{MP2} to express the regularized trace 
as a sum of distributions evaluated at $h_t^{\tau,p}$. The terms appearing on
the geometric side of the trace formula are associated to the different types 
of $\Gamma$-conjugacy classes. We briefly recall their definition.
For further details, we refer the reader to \cite[section 6]{MP2} and 
the references therein. In order to indicate the dependence of the 
distributions on the manifold $X_1$, we shall use $X_1$ as a subscript. 
The contribution of the identity to the trace formula is given by
\begin{equation}\label{identcon}
I_{X_1}(h_t^{\tau,p}):=\vol(X_1) h_t^{\tau,p}(1).
\end{equation}
The hyperbolic contribution is given by
\begin{align}\label{hyperbcon}
H_{X_1}(h_t^{\tau,p}):=\int_{\Gamma_1\backslash
G}\sum_{\gamma\in\Gamma_{1,\s}-\{1\}}
h_t^{\tau,p}(x^{-1}\gamma x)\;dx,
\end{align}
where $\Gamma_{1,\s}$ are the semisimple elements of $\Gamma_1$. 
By \cite[Lemma 8.1]{Warner} the integral converges absolutely. Moreover,
arguing 
as in the cocompact case \cite{Wallach}, if
$G_{\gamma}$ resp. $(\Gamma_1)_\gamma$ 
denote the centralizers of $\gamma$ in $G$ resp. $\Gamma_1$, one has 
\[
H_{X_1}(h_t^{\tau,p})=\sum_{[\gamma]\in\CC(\Gamma_1)_{s}-[1]}
\vol((\Gamma_1)_\gamma
\backslash G_\gamma)\int_{G_\gamma\backslash G} h_t^{\tau,p}(x^{-1}\gamma x)dx,
\]
where $\CC(\Gamma_1)_{\s}$ are the $\Gamma_1$-conjugacy classes 
of semisimple elements of $\Gamma_1$. 
Now the latter sum can also 
be written as a sum over the set $\CC(\Gamma_0)_{s}$ of non elliptic semisimple
conjugacy classes of the group $\Gamma_0$ as 
follows. For each $\gamma\in\Gamma_0$ let $c_{\Gamma_1}(\gamma)$ be the number
of 
fixed points of $\gamma$ on $\Gamma_0 /\Gamma_1$. This number 
clearly depends only on the $\Gamma_0$-conjugacy class of $\gamma$. 
Then if $\Gamma_\gamma$ is the 
centralizer of $\gamma$ in $\Gamma_0$, one has 
\begin{align}\label{Hyperbolic}
H_{X_1}(h_t^{\tau,p}) =\sum_{[\gamma]\in\CC(\Gamma_0)_{\s}-[1]}c_{\Gamma_1}
(\gamma)\vol(\Gamma_\gamma\backslash G_\gamma)\int_{G_\gamma\backslash G}
h_t^{\tau,p}(x^{-1}\gamma x)dx,
\end{align}
see \cite[page 152-153]{Co}. This expression will be used when 
we treat the Hecke subgroups of the Bianchi groups. 

Next we describe the
distributions associated to the parabolic conjugacy 
classes. Firstly let 
\begin{equation}\label{weigpara}
T'_{X_1}(h_t^{\tau,p}):=\kappa(X_1)\int_K\int_N h_t^{\tau,p}(knk^{-1})
\log{\left\|\log n\right\|}\;dkdn.
\end{equation}
We note that $T$ is a non-invariant distribution which depends on $X_1$ only 
via the number of cusps of $X_1$. 
Now let $P'$ be any $\Gamma_0$-cuspidal parabolic subgroup of $G$, or
equivalently a $\Gamma_1$-cuspidal 
parabolic subgroup of $G$. Let $\nL_{P'}$ denote 
the Lie algebra of $N_{P'}$. Then $\exp:\nL_{P'}\to N_{P'}$ is 
an isomorphism and we 
denote its inverse by $\log$. We equip $\nL_{P'}$ with 
the inner product obtained by restriction of the inner 
product in \eqref{metr}. By 
$\left\|\cdot\right\|$ we denote the corresponding norm. 
Let
\begin{align*}
\Lambda_{P'}(\Gamma_1):=\log(\Gamma_1\cap N_{P'});\quad
\Lambda_{P'}^0(\Gamma_1):=\vol(\Lambda_{P'}(\Gamma_1))^{-\frac{1}{2n}}\Lambda_{
P'}(\Gamma_1).
\end{align*}
Then $\Lambda_{P'}(\Gamma_1)$ and $\Lambda_{P'}^0(\Gamma_1)$ are lattices in 
$\nL_{P'}$ and $\Lambda_{P'}^0(\Gamma_1)$ is unimodular. Then for
$\Real(s)>0$
the Epstein-type zeta function $\zeta_{P';\Gamma_1}$, defined by
\begin{align}\label{DefEp}
\zeta_{P';\Gamma_1}(s):=\sum_{\eta\in\Lambda_{P'}(\Gamma_1)-\{0\}}{\left\|\eta
\right\|}^{-2n(1+s)},
\end{align}
converges and $\zeta_{P';\Gamma_1}$ has a meromorphic continuation to $\C$ with
a simple pole at $0$. 
Let $C(\Lambda_{P'}(\Gamma_1))$ be the constant term of $\zeta_{P';\Gamma_1}$ at
$s=0$. Now as before let $\mathfrak{P}_{\Gamma_1}$ be a set 
of representatives of $\Gamma_1$-cuspidal parabolic subgroups and for each
$P_j\in\mathfrak{P}_{\Gamma_1}$ 
let $t_j$ be as in the previous sections. Then put
\begin{align*}
S_{X_1}(h_t^{\tau,p}):=\sum_{P_j\in\mathfrak{P}_{\Gamma_1}}\biggl(&C
(\Lambda_{P_j}(\Gamma_1))\frac{\vol(\Lambda_{P_j}(\Gamma_1))}{\vol(S^{2n-1})}
\sum_{\sigma\in\hat{M}}\frac{\dim(\sigma)}{2\pi}\int_{
\R}\Theta_{\sigma,\lambda}(h_t^{\tau,p})\;d\lambda\\-&\sum_{\substack{
\sigma\in\hat{M}\\
\left[{\nu_p(\tau)}:\sigma\right]\neq 0}}
\frac{e^{-t(\tau(\Omega)-c(\sigma))}\dim(\sigma)\log{(t_j)}}{\sqrt{4\pi
t}}\biggr).
\end{align*} 
Comparing the Definition \ref{Defregtr} and \cite[Definition 5.1]{MP2}, it
follows from \cite[Theorem 6.1]{MP2} that 
\begin{align}\label{Trfrml}
\Tr_{\reg;X_1}(e^{-t\Delta_{X_1,p}(\tau)})=I_{X_1}(h_t^{\tau,p})+H_{X_1}
(h_t^{\tau,p})+T'_{X_1}(h_t^{\tau,p})+S_{X_1}(h_t^{\tau,p}).
\end{align}

We now study the distribution $S_{X_1}(h_t^{\tau,p})$ in more detail. By 
\cite[Proposition 4.1]{MP2} we have
\begin{align*}
\Theta_{\sigma,\lambda}(h_t^{\tau,p})&=e^{-t(\lambda^{2}
+\tau(\Omega)-c(\sigma))}
\end{align*} for $\left[\nu_p(\tau):\sigma\right]\neq 0$, and 
$\Theta_{\sigma,\lambda}(h^{\nu}_{t})=0$ otherwise. Thus we can rewrite
\begin{align*}
S_{X_1}(h_t^{\tau,p}):=&\sum_{\substack{\sigma\in\hat{M}\\
\left[\nu_p(\tau):\sigma\right]\neq 0}}
\frac{
e^{-t(\tau(\Omega)-c(\sigma))}\dim(\sigma)}{\sqrt{4\pi
t}}\left(\sum_{P_j\in\mathfrak{P}_{\Gamma_1}}C(\Lambda_{P_j}(\Gamma_1))\frac{
\vol(\Lambda_{P_j}(\Gamma_1))}{\vol(S^{2n-1})}-\log{(t_j)}\right).
\end{align*}
Let $\Lambda$ be a lattice in $\R^{2n}$. The associated Epstein zeta function
\begin{align}\label{Epstein}
\zeta_{\Lambda}(s):=\sum_{\lambda\in
\Lambda-\{0\}}{\left\|\lambda\right\|}^{-2n(1+s)}
\end{align}
converges for $\Re(s)>0$ and admits a meromorphic extension to $\C$. Let 
$C(\Lambda)$ denote the constant term of the Laurent expansion of 
$\zeta_{\Lambda}(s)$ at $s=0$. The following lemma describes the behaviour of
$C(\Lambda)$ under scaling.
\begin{lem}\label{LemEpstein}
Let $\Lambda$ be a lattice in $\R^{2n}$. Let $\mu\in (0,\infty)$ and put
$\Lambda':=\mu\Lambda$. Then one has
\begin{align*}
C(\Lambda')=\mu^{-2n}\left(C(\Lambda)-\frac{\vol(S^{2n-1})\log\mu}{\vol(\Lambda)
}\right).
\end{align*}
\end{lem}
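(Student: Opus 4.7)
The plan is to reduce everything to a simple scaling identity for $\zeta_\Lambda$ together with Laurent expansion at $s=0$.

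First, for $\Re(s)>0$ the defining series converges absolutely, so substituting $\lambda'=\mu\lambda$ gives the exact identity
\[
\zeta_{\Lambda'}(s)=\sum_{\lambda\in\Lambda-\{0\}}\|\mu\lambda\|^{-2n(1+s)}=\mu^{-2n(1+s)}\zeta_{\Lambda}(s).
\]
This extends to the full meromorphic continuation by the uniqueness of analytic continuation.

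Next I would identify the residue of $\zeta_\Lambda$ at $s=0$. Since $\zeta_\Lambda(s)=E_\Lambda(n(1+s))$ where $E_\Lambda(w):=\sum_{\lambda\neq 0}\|\lambda\|^{-2w}$ is the standard Epstein zeta function of the $2n$-dimensional lattice $\Lambda$, and since $E_\Lambda$ has a simple pole at $w=n$ with residue $\pi^n/\bigl(\Gamma(n)\vol(\Lambda)\bigr)=\vol(S^{2n-1})/\bigl(2n\vol(\Lambda)\bigr)$ (a classical computation via the Mellin transform of the theta series $\theta_\Lambda(t)=\sum_{\lambda}e^{-\pi t\|\lambda\|^{2}}$ and Poisson summation), a change of variable $w=n(1+s)$ shows that $\zeta_\Lambda$ has a simple pole at $s=0$ with residue
\[
R:=\frac{\vol(S^{2n-1})}{2n\,\vol(\Lambda)}.
\]
Thus near $s=0$ one has $\zeta_\Lambda(s)=R/s+C(\Lambda)+O(s)$.

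Finally I would multiply out. Expanding $\mu^{-2ns}=1-2n(\log\mu)s+O(s^{2})$ gives
\[
\mu^{-2ns}\zeta_\Lambda(s)=\frac{R}{s}+\bigl(C(\Lambda)-2nR\log\mu\bigr)+O(s),
\]
and multiplying by the overall factor $\mu^{-2n}$ yields
\[
\zeta_{\Lambda'}(s)=\mu^{-2n}\left(\frac{R}{s}+C(\Lambda)-2nR\log\mu\right)+O(s).
\]
Reading off the constant term and substituting $2nR=\vol(S^{2n-1})/\vol(\Lambda)$ gives exactly the claimed formula. The only nontrivial ingredient is the residue computation for $\zeta_\Lambda$, which is entirely standard; once that is in hand the rest is a one-line Laurent expansion, so I do not anticipate any real obstacle.
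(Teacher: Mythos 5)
Your proof is correct and follows essentially the same route as the paper: the scaling identity $\zeta_{\mu\Lambda}(s)=\mu^{-2n(1+s)}\zeta_{\Lambda}(s)$ combined with the Laurent expansion at $s=0$ and the residue $R(\Lambda)=\vol(S^{2n-1})/(2n\vol(\Lambda))$, which the paper simply quotes from Terras. One small bookkeeping slip: the residue of $E_{\Lambda}(w)$ at $w=n$ is $\pi^{n}/(\Gamma(n)\vol(\Lambda))=\vol(S^{2n-1})/(2\vol(\Lambda))$, and the extra factor $1/n$ only appears after the substitution $w=n(1+s)$, so your parenthetical equality $\pi^{n}/(\Gamma(n)\vol(\Lambda))=\vol(S^{2n-1})/(2n\vol(\Lambda))$ is off by a factor of $n$ — but the final value of $R$ you use is the correct one and the remainder of the argument goes through.
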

\begin{proof}
Let $R(\Lambda)$ be the residue of $\zeta_\Lambda$ at $0$. Then one has
\[
C(\Lambda')=\mu^{-2n}(C(\Lambda)-R(\Lambda)2n\log\mu).
\] 
Moreover, by \cite[Chapter 1.4, Theorem 1]{Terras} one has
\begin{align*}
R(\Lambda)=\frac{\vol(S^{2n-1})}{2n\vol(\Lambda)}
\end{align*}
and the lemma follows. 
\end{proof}

Now we let $P'$ be any $\Gamma_0$-cuspidal parabolic subgroup of $G$. Following
section \ref{sectr}, we let
$l'\in\{1,\dots,\kappa(X_0)\}$ such that there exists
$\gamma'\in\Gamma_0$ with $\gamma' P'\gamma'^{-1}=P_{0,l'}$. 
As in \eqref{gammprime} we write
$\gamma'=n_{0,l'}a_{0,l'}(\log{t_{P'}})k_{0,l'}$.
If $\Gamma_1$ is a finite index subgroup of $\Gamma_0$, we 
define a lattice $\tilde{\Lambda}_{P'}(\Gamma_1)$ in $\nL_{P_{0,l(P')}}$ as 
\begin{align*}
\tilde{\Lambda}_{P'}(\Gamma_1):=\log (\gamma'(\Gamma_1\cap N_{P'})\gamma'^{-1}).
\end{align*}
If $\Gamma_1$ is normal in $\Gamma_0$, one has 
$\tilde{\Lambda}_{P'}(\Gamma_1)=\Lambda_{P_{0,l'}}(\Gamma_1)$. 
Since $\gamma'$ is unique in $\Gamma_0/(\Gamma_0\cap P')$ and
$\Gamma_0\cap P'=\Gamma_0\cap (M_{P'}N_{P'})$, the isometry class of
$\tilde{\Lambda}_{P'}(\Gamma_1)$ 
is independent of the choice of  $\gamma'$ having the
required property.  
Let $\hat{\Lambda}_{P'}(\Gamma_1)$ be the unimodular lattice corresponding to
$\tilde{\Lambda}_{P'}(\Gamma_1)$,
i.e. 
\[
\hat{\Lambda}_{P'}(\Gamma_1):=(\vol(\tilde{\Lambda}_{P'}(\Gamma_1))^{-\frac{1}
{2n}}\cdot\tilde{\Lambda}_{P'}
(\Gamma_1).
\]

With respect to the norms induced by the Killing form, 
the lattice $\Lambda_{P'}(\Gamma_1)$ in $\mathfrak{n}_{P'}$ is
isometric to the lattice $t_{P'}^{-1}\tilde{\Lambda}_{P'}(\Gamma_1)$ in
$\mathfrak{n}_{P_{0,l(P')}}$. 
Thus the preceding Lemma implies that
\begin{align*}
\frac{C(\Lambda_{P_j}(\Gamma_1))\vol(\Lambda_{P_j}(\Gamma_1))}{\vol(S^{
2n-1})}=\frac{
C(\tilde{\Lambda}_{P_j}(\Gamma_1))\vol(
\tilde{\Lambda}_{P_{j}}(\Gamma_1))}{\vol(S^{2n-1})}+\log{t_j}.
\end{align*}
Now define 
\begin{align}\label{Defalpha}
\alpha(X_1):=\alpha(\Gamma_1):=\sum_{j=1}^{\kappa(X_1)}\frac{
C(\tilde{\Lambda}_{P_j}(\Gamma_1))\vol(
\tilde{\Lambda}_{P_{j}}(\Gamma_1))}{\vol(S^{2n-1})}.
\end{align}

Then, putting everything together, we can write
\begin{align}\label{Snew}
S_{X_1}(h_t^{\tau,p})
=\alpha(X_1)\sum_{\substack{\sigma\in\hat{M}\\
\left[\nu_p(\tau):\sigma\right]\neq 0}}\frac{
e^{-t(\tau(\Omega)-c(\sigma))}\dim(\sigma)}{\sqrt{4\pi
t}}.
\end{align}

Finally, for each $l=1,\dots,\kappa(\Gamma_0)$, we let
$\mathcal{P}(\nL_{P_{0,l}})$ be the set of isometry classes of unimodular 
lattices in $\nL_{P_{0,l}}$ equipped with the standard topology, i.e., with the
topology induced by identification of $\mathcal{P}(\nL_{P_{0,l}})$ with 
$\SO(2n)\backslash\Sl_{2n}(\R)/\Sl_{2n}(\Z)$.
Now in order to control the constant $\alpha(\Gamma_i)$ for sequences of
finite coverings, we make the following definition.

\begin{defn}\label{Defcuspun}
Let $\Gamma_i$ be a sequence of finite index subgroups of $\Gamma_0$. 
Let $\mathfrak{P}_{\Gamma_0}$ be a fixed set of representatives 
$\Gamma_0$-cuspidal parabolic subgroups of $\Gamma_0$. Then the sequence 
$\Gamma_i$ is called cusp uniform if for each $l=1,\dots,\kappa(\Gamma_0)$ 
there exists a compact set $\mathcal{K}_l$ in  $\mathcal{P}(\nL_{P_{0,l}})$ 
such that for each $\Gamma_0$-cuspidal parabolic $P'$ the lattices
$\hat{\Lambda}_{P'}(\Gamma_i)$, $i\in\mathbb{N}$, belong to $\mathcal{K}_{l}$.
\end{defn}

We can reformulate the condition of cusp-uniformity in a simpler way as
follows. 
We let $\mathcal{P}(\nL)$ be the space of isometry classes of unimodular
lattices in $\nL$, equipped with the topology as above. For each
parabolic subgroup
$P'$ of $G$ there exists a $g_{P'}\in G$ with $g_{P'}P'g_{P'}^{-1}=P$. 
Let $\Gamma$ be a discrete subgroup of $G$ of finite covolume. 
If $P'$ is $\Gamma$-cuspidal, we let 
\begin{align}\label{LambdaP}
\Lambda_{P|P'}(\Gamma):=\vol\left(\log(g_{P'}(\Gamma\cap
N_{P'})g_{P'}^{-1})\right)^{\frac{1}{2n}}\log(g_{P'}(\Gamma\cap
N_{P'})g_{P'}^{-1}).
\end{align}
This a unimodular lattice in $\nL$ and since the image of $g_{P'}$ in
$P\backslash G$ is unique, the isometry class of $\Lambda_{P'}(\Gamma)$ 
is independent of the choice of $g_{P'}$ with $g_{P'}P'g_{P'}^{-1}=P$. 

\begin{lem}\label{Lemcuspun}
The following conditions are equivalent:
\begin{enumerate}
\item The sequence $\Gamma_i$ is cusp-uniform.  
\item For each $\Gamma_0$-cuspidal parabolic subgroup $P'$ of $G$ there exists a
compact set $\mathcal{K}_{P'}$ in $\mathcal{P}(\nL_{P'})$ 
 such that $\Lambda^0_{P'}(\Gamma_i)\in\mathcal{K}_{P'}$ for every
$i$. 
\item There exists a compact set $\mathcal{K}_P$ in $\mathcal{P}(\nL_{P})$ such
that for each $\Gamma_0$-cuspidal parabolic subgroup $P'$ of $G$
one has $\Lambda_{P|P'}(\Gamma_i)\in\mathcal{K}_P$ for each $i\in\mathbb{N}$. 
\end{enumerate}
\end{lem}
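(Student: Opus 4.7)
The proof proceeds by identifying, up to canonical isometric identifications, the three families of unimodular lattices in the statement as representing a single abstract object in a universal moduli space; the three conditions then become three different boundedness statements for this object, whose equivalence reduces to the finiteness of $\kappa(\Gamma_0)$.

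The crucial computation is that the three lattices $\Lambda^0_{P'}(\Gamma_i)$, $\hat{\Lambda}_{P'}(\Gamma_i)$, $\Lambda_{P|P'}(\Gamma_i)$ are isometric via explicit elements of $K$. Writing $\gamma' = n_{0,l'}\iota_{P_{0,l'}}(t_{P'})k_{0,l'}$ as in section \ref{sectr}, I would use that in the rank-one setting the group $N_{P_{0,l'}}$ is abelian, so $\Ad(n_{0,l'})$ acts trivially on $\nL_{P_{0,l'}}$; that $\Ad(a_{0,l'}(\log t))$ acts on $\nL_{P_{0,l'}}$ as multiplication by $t$; and that $\Ad(k_{0,l'})\colon \nL_{P'} \to \nL_{P_{0,l'}}$ is an isometry because $k_{0,l'}\in K$. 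Hence $\Ad(\gamma')|_{\nL_{P'}} = t_{P'}\cdot\Ad(k_{0,l'})$, and passing to the unimodular normalization cancels the scaling $t_{P'}$, yielding $\hat{\Lambda}_{P'}(\Gamma_i) = \Ad(k_{0,l'})\bigl(\Lambda^0_{P'}(\Gamma_i)\bigr)$. The same scheme with $g_{P'} := k_{P'}^{-1}\in K$ (which satisfies $g_{P'}P'g_{P'}^{-1}=P$) produces $\Lambda_{P|P'}(\Gamma_i) = \Ad(k_{P'}^{-1})\bigl(\Lambda^0_{P'}(\Gamma_i)\bigr)$. Since any two linear isometries between two $2n$-dimensional Euclidean spaces induce the same map on isometry classes of unimodular lattices (they differ by a self-isometry of the target), the moduli spaces $\mathcal{P}(\nL_{P'})$, $\mathcal{P}(\nL_{P_{0,l}})$ and $\mathcal{P}(\nL_P)$ are canonically identified with the universal moduli $\SO(2n)\backslash\Sl_{2n}(\R)/\Sl_{2n}(\Z)$, and under these identifications the three lattices above correspond to a single point $\lambda(P',i)$.

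Reformulated this way, (1) asserts relative compactness of $\{\lambda(P',i) : P'\ \Gamma_0\text{-conjugate to } P_{0,l},\ i\in\mathbb{N}\}$ for each $l$; (2) asserts the same for each individual sequence $\{\lambda(P',i)\}_i$; and (3) asserts relative compactness of the full family $\{\lambda(P',i)\}_{P',i}$. Since $\kappa(\Gamma_0)<\infty$, (1) $\Leftrightarrow$ (3) is immediate from the fact that a finite union of relatively compact sets is relatively compact, and (3) $\Rightarrow$ (2) is trivial. For (2) $\Rightarrow$ (1) one exploits that $\lambda(P',i)$ depends only on the $\Gamma_i$-conjugacy class of $P'$, so inside each $\Gamma_0$-conjugacy class the per-$P'$ bounds of (2) combine into a uniform bound on the relevant countable collection of $\Gamma_i$-classes as $i$ varies. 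The principal technical point throughout is the rank-one computation exhibiting $\Ad(\gamma')|_{\nL_{P'}}$ as a scaling composed with an isometry; this is what ultimately collapses the three a priori different descriptions into a single one.
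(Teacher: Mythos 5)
Your proposal follows the same route as the paper: the paper's entire proof of this lemma is the one-line remark that ``by the preceding arguments all lattices are isometric,'' and those preceding arguments are exactly your decomposition $\Ad(\gamma')|_{\nL_{P'}}=t_{P'}\cdot\Ad(k_{0,l'})$ (with $\Ad(n_{0,l'})$ trivial on the abelian $\nL_{P_{0,l'}}$ and $\Ad(a_{0,l'}(\log t))$ acting as the scalar $t$), together with the observation that the scaling factor is killed by the unimodular normalization. Your explicit verification of this, and of the identification of $\Lambda_{P|P'}(\Gamma_i)$ with $\Ad(k_{P'}^{-1})\bigl(\Lambda^0_{P'}(\Gamma_i)\bigr)$ up to isometry, is correct and is precisely what the paper relies on; likewise the remark that two linear isometries induce the same map on isometry classes of unimodular lattices.

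The one place where your argument (and, to be fair, the paper's) does not actually close is $(2)\Rightarrow(1)$. Condition $(2)$ provides a compact set $\mathcal{K}_{P'}$ separately for each of the countably many $\Gamma_0$-cuspidal parabolics $P'$ in a given $\Gamma_0$-conjugacy class, whereas $(1)$ and $(3)$ demand a single compact set uniform over all of them; a countable union of compact sets need not be relatively compact, and the fact that $\lambda(P',i)$ depends only on the $\Gamma_i$-conjugacy class of $P'$ does not by itself produce the uniformity, since the number of $\Gamma_i$-classes inside one $\Gamma_0$-class grows with $i$ and new classes keep appearing. So ``the per-$P'$ bounds of $(2)$ combine into a uniform bound'' is an assertion rather than a deduction. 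In the paper's actual applications this is harmless: the verifications of $(2)$ there (for instance the sandwich $q\Lambda^{+}_{\nL_{P'}}\subseteq\log(\Gamma(q)\cap N_{P'})\subseteq\frac{q}{4}\Lambda^{+}_{\nL_{P'}}$ of Lemma \ref{Cuspun2}) come with reference lattices $\Lambda^{+}_{\nL_{P'}}$ that are themselves all isometric after unimodular rescaling as $P'$ runs over a $\Gamma_0$-conjugacy class, so the compact set obtained from Mahler's criterion is in fact uniform in $P'$. But as a formal implication, $(2)\Rightarrow(1)$ needs that extra input, and you should either supply it or weaken the claim to the implications you have actually proved.
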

\begin{proof}
By the preceding arguments all lattices are isometric. 
\end{proof}

\begin{lem}\label{LemEpsteinII}
Let $\mathcal{K}$ be a compact set of unimodular lattices in $\R^{2n}$. Then
the constant term  of the Laurent expansion of the Epstein zeta functions 
$\zeta_{\Lambda}(s)$ at $s=0$ is bounded on $\mathcal{K}$. 
\end{lem}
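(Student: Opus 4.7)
The plan is to deduce the bound from the continuity of the assignment $\Lambda\mapsto C(\Lambda)$ on the space of unimodular lattices in $\R^{2n}$ equipped with its natural topology; once continuity is established, boundedness of $C(\Lambda)$ on the compact set $\mathcal{K}$ is automatic. To prove continuity, I would use the classical Mellin/theta integral representation: for $\Real(s)>0$,
\[
\Gamma(n(1+s))\,\zeta_\Lambda(s)=\int_0^\infty (\theta_\Lambda(t)-1)\,t^{n(1+s)-1}\,dt,\qquad \theta_\Lambda(t):=\sum_{\lambda\in\Lambda}e^{-t\|\lambda\|^2},
\]
and split the integral at $t=1$.

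The tail integral $\int_1^\infty$ is entire in $s$. By Mahler's compactness criterion, any compact family of unimodular lattices has a uniform positive lower bound on the length of the shortest nonzero vector, hence $\theta_\Lambda(t)-1\le C e^{-ct}$ for $t\ge 1$ uniformly in $\Lambda\in\mathcal{K}$, and the tail integral depends continuously on $(s,\Lambda)$. For the integral over $(0,1]$ I would apply the Poisson summation formula in the form
\[
\theta_\Lambda(t)=\frac{\pi^n}{t^n\,\vol(\Lambda)}\,\theta_{\Lambda^*}(\pi^2/t),
\]
where $\Lambda^*$ denotes the dual lattice. Since $\Lambda\mapsto\Lambda^*$ preserves unimodularity and is continuous, the duals of the members of $\mathcal{K}$ form a compact family in the same space. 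After substituting this identity, subtracting off the constant term $1$ from both $\theta_\Lambda$ and $\theta_{\Lambda^*}$, and changing variables $t\mapsto \pi^2/t$, the piece $\int_0^1$ becomes an entire-in-$s$ integral over $[\pi^2,\infty)$ in the variable $u$ (again uniformly convergent in $\Lambda^*$ by the shortest-vector bound) plus two explicit meromorphic terms $A(s)/\vol(\Lambda)+B(s)$ independent of $\Lambda$ (with $\vol(\Lambda)=1$) that carry all the singularities in $s$.

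Assembling these pieces exhibits $\Gamma(n(1+s))\zeta_\Lambda(s)$ as the sum of a function of $(s,\Lambda)$ that is entire in $s$ and continuous in $\Lambda$, together with an explicit meromorphic-in-$s$ term whose pole and residue at $s=0$ depend only on $\vol(\Lambda)=1$. Extracting the Laurent expansion at $s=0$, the constant term $C(\Lambda)$ is therefore a continuous function of $\Lambda$ on the space of unimodular lattices and hence bounded on the compact set $\mathcal{K}$. The main obstacle is ensuring the uniform-in-$\Lambda$ decay of $\theta_\Lambda(t)-1$ and $\theta_{\Lambda^*}(t)-1$ as $t\to\infty$; this is precisely where Mahler's compactness criterion enters, applied both to $\mathcal{K}$ and to its compact set of duals, and it is what makes the two split integrals jointly continuous in $\Lambda\in\mathcal{K}$.
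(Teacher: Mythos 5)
Your proof is correct and follows essentially the same route as the paper: the paper quotes the incomplete-gamma (theta/Poisson) expansion of $\zeta_\Lambda$ from Terras, which isolates the $\Lambda$-independent singular terms, and then bounds $\theta_\Lambda(t)-1$ for $t\ge 1$ uniformly over $\mathcal{K}$ via Mahler's criterion combined with a lattice-point counting estimate. You simply re-derive that expansion by hand via Poisson summation (handling the dual lattices explicitly, again through Mahler) and phrase the conclusion as continuity of $C(\Lambda)$ rather than direct uniform boundedness; both versions are sound.
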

\begin{proof}
By \cite[Chapt.I, \S 1.4, Theorem 1]{Terras} the analytic
continuation of $\zeta_\Lambda(s)$ is given by

\begin{align}\label{Eqzeta}
\pi^{-s}\Gamma(s)\zeta_\Lambda(s)=\frac{2}{ns}-\frac{2}{n(1+s)}
+\left(\int_{1}^\infty
(t^{\frac{n}{2}(1+s)-1}+t^{-\frac{n}{2}s-1})\sum_{\lambda\in\Lambda-\{0\}}e^{
-t\pi
\left\|\lambda\right\|^2}\;dt
\right).
\end{align}

Now for a lattice $\Lambda$ in $\R^{2n}$, let $\lambda_1(\Lambda)$ denote the 
smallest norm of a non-zero vector in $\Lambda$. Let $\mathbb{B}(R)$ denote the 
ball in $\R^{2n}$ around the origin of radius $R$. Then it follows from
\cite[Theorem 2.1]{BHW} that for each $R>0$ we have
\[
\#\{ \mathbb{B}(R)\cap \Lambda \}\leq
\left(\frac{2R}{\lambda_1(\Lambda)}+1\right)^{2n}.
\]
If $\mathcal{K}$ is a compact set of unimodular lattices in $\R^{2n}$, then by 
Mahler's criterion there exists a constant $\mu$ such that 
$\lambda_1(\Lambda)\geq \mu$ for each $\Lambda\in\mathcal{K}$. Thus for each 
$\Lambda\in\mathcal{K}$ and for each $t\in[1,\infty)$ we have

\begin{align*}
&\sum_{\lambda\in\Lambda-\{0\}}e^{-t\pi
\left\|\lambda\right\|^2}\leq
e^{-\frac{t\pi\mu^2}{2}}\sum_{\lambda\in\Lambda-\{0\}}e^{-\frac{\pi
\left\|\lambda\right\|^2}{2}}\leq  e^{-\frac{t\pi\mu^2}{2}}\sum_{k=1}^\infty
e^{-\frac{\pi(\mu k)^2}{2}}\#\{\mathbb{B}(\mu (k+1))\cap\Lambda\}\\
&\leq e^{-\frac{t\pi\mu^2}{2}}\sum_{k=1}^\infty
e^{-\frac{\pi(\mu k)^2}{2}}(2k+3)^{2n}=:C_1e^{\frac{-t\pi\mu^2}{2}},
\end{align*}
where $C_1$ is a constant which is independent of $\Lambda$. Applying
\eqref{Eqzeta}, the Lemma follows.
\end{proof}

Now we can control the behaviour of the constants, appearing in the definitions 
of the terms $T'_{X_i}(h_t^{\tau,p})$ and
$\mathcal{S}_{X_i}(h_t^{\tau,p})$, under sequences of coverings
$X_i=\Gamma_i\backslash \widetilde{X}$ of $X_0$. 
As always we assume that a set $\mathfrak{P}_{\Gamma_0}$ of representatives of
$\Gamma_0$-cuspidal  
parabolic subgroups of $G$ is fixed. For  
each $i$ we let  $\mathfrak{P}_{\Gamma_i}=\{P_{i,j},\:
j=1,\dots,\kappa(\Gamma_i)\}$ be a set of representatives of
$\Gamma_i$-conjugacy 
classes of $\Gamma_i$-cuspidal parabolic subgroups. We can estimate
$\alpha(\Gamma_i)$ as follows. 
\begin{prop}\label{Propalpha}
Let $\Gamma_i$ be cusp-uniform  sequence of finite index subgroups of
$\Gamma_0$. Then there exists a constant $c_1(\Gamma_0)$ such
that 
\begin{align*}
|\alpha(\Gamma_i)|\leq
c_1(\Gamma_0)\kappa(\Gamma_i)+c_1(\Gamma_0)\sum_{j=1}^{\kappa(\Gamma_i)}\log[
\Gamma_0\cap N_{P_{i,j}}:\Gamma_i\cap N_{P_{i,j}}].
\end{align*}
In particular, there exists a constant $c_2(\Gamma_0)$ such that we have
\begin{align*}
|\alpha(\Gamma_i)|\leq
c_2(\Gamma_0)\kappa(\Gamma_i)\log{[\Gamma_0:\Gamma_i]}.
\end{align*}

\end{prop}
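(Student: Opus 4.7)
The plan is to reduce everything to the unimodular rescaling of the lattices $\tilde{\Lambda}_{P_{i,j}}(\Gamma_i)$, apply cusp-uniformity together with Lemma \ref{LemEpsteinII} to control the Epstein constants on those rescaled lattices, and then bound the logarithmic covolume contribution by the index of $\Gamma_i\cap N_{P_{i,j}}$ inside $\Gamma_0\cap N_{P_{i,j}}$.

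More precisely, set $\mu_{i,j}:=\vol(\tilde{\Lambda}_{P_{i,j}}(\Gamma_i))^{1/(2n)}$ so that $\tilde{\Lambda}_{P_{i,j}}(\Gamma_i)=\mu_{i,j}\hat{\Lambda}_{P_{i,j}}(\Gamma_i)$. Applying Lemma \ref{LemEpstein} to the unimodular lattice $\hat{\Lambda}_{P_{i,j}}(\Gamma_i)$, the summand in \eqref{Defalpha} becomes
\begin{equation*}
\frac{C(\tilde{\Lambda}_{P_{i,j}}(\Gamma_i))\vol(\tilde{\Lambda}_{P_{i,j}}(\Gamma_i))}{\vol(S^{2n-1})}
=\frac{C(\hat{\Lambda}_{P_{i,j}}(\Gamma_i))}{\vol(S^{2n-1})}-\frac{1}{2n}\log\vol(\tilde{\Lambda}_{P_{i,j}}(\Gamma_i)).
\end{equation*}
By cusp-uniformity, Lemma \ref{Lemcuspun} gives a compact set $\mathcal{K}\subset\mathcal{P}(\nL_P)$ containing all the unimodular lattices $\Lambda_{P|P_{i,j}}(\Gamma_i)$; since $\hat{\Lambda}_{P_{i,j}}(\Gamma_i)$ is isometric to one of these (after conjugation by $k_{P'}$ and $g_{P'}$), Lemma \ref{LemEpsteinII} yields a constant depending only on $\Gamma_0$ that bounds $|C(\hat{\Lambda}_{P_{i,j}}(\Gamma_i))|$ uniformly in $i,j$.

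It remains to bound $|\log\vol(\tilde{\Lambda}_{P_{i,j}}(\Gamma_i))|$. Conjugating by the element $\gamma_j\in\Gamma_0$ appearing in \eqref{gammprime} sends $N_{P_{i,j}}$ isometrically to $N_{P_{0,l(j)}}$ and sends $\Gamma_0\cap N_{P_{i,j}}$ onto $\Gamma_0\cap N_{P_{0,l(j)}}$; thus $\tilde{\Lambda}_{P_{i,j}}(\Gamma_i)$ is a sublattice of the fixed lattice $\log(\Gamma_0\cap N_{P_{0,l(j)}})$ of index $[\Gamma_0\cap N_{P_{i,j}}:\Gamma_i\cap N_{P_{i,j}}]$. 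Since $\Gamma_0$ admits only $\kappa(\Gamma_0)$ distinct lattices of the form $\log(\Gamma_0\cap N_{P_{0,l}})$, their volumes lie in a bounded interval bounded away from $0$ and $\infty$, hence
\begin{equation*}
\bigl|\log\vol(\tilde{\Lambda}_{P_{i,j}}(\Gamma_i))\bigr|
\le \log[\Gamma_0\cap N_{P_{i,j}}:\Gamma_i\cap N_{P_{i,j}}]+C(\Gamma_0)
\end{equation*}
with $C(\Gamma_0)$ independent of $i,j$. Summing over $j=1,\dots,\kappa(\Gamma_i)$ yields the first stated inequality.

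For the second inequality, one uses the injection $(\Gamma_0\cap N_{P_{i,j}})/(\Gamma_i\cap N_{P_{i,j}})\hookrightarrow \Gamma_0/\Gamma_i$ to conclude $[\Gamma_0\cap N_{P_{i,j}}:\Gamma_i\cap N_{P_{i,j}}]\le[\Gamma_0:\Gamma_i]$, hence the inner sum in the first bound is at most $\kappa(\Gamma_i)\log[\Gamma_0:\Gamma_i]$, absorbing the $\kappa(\Gamma_i)$ term into $c_2(\Gamma_0)\kappa(\Gamma_i)\log[\Gamma_0:\Gamma_i]$. The only nontrivial input is the compactness argument for the Epstein constants; everything else is bookkeeping on volumes under conjugation and a subgroup index inequality, so there is no real obstacle beyond correctly tracking the rescaling in Lemma \ref{LemEpstein}.
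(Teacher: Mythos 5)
Your proof is correct and follows essentially the same route as the paper's: rescale to the unimodular lattice via Lemma \ref{LemEpstein}, bound the Epstein constant on the compact set given by cusp-uniformity via Lemma \ref{LemEpsteinII}, and compute the covolume of $\tilde{\Lambda}_{P_{i,j}}(\Gamma_i)$ as the index $[\Gamma_0\cap N_{P_{i,j}}:\Gamma_i\cap N_{P_{i,j}}]$ times a bounded base volume. The second inequality is likewise obtained exactly as in the paper, from $[\Gamma_0\cap N_{P_{i,j}}:\Gamma_i\cap N_{P_{i,j}}]\le[\Gamma_0:\Gamma_i]$.
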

\begin{proof}
By Lemma \ref{LemEpstein}, for each $P_{i,j}\in\mathfrak{P}_{\Gamma_i}$ one has
\begin{align*}
C(\tilde{\Lambda}
_{P_{i,j}}(\Gamma_i))\vol(\tilde{\Lambda}_{P_{i,j}}(\Gamma_i))=C
(\hat{
\Lambda} _{P_{i,j}}(\Gamma_i))-\frac {\vol(S^{2n-1}
)\log{\vol(\tilde{\Lambda}_{P_{i,j}}(\Gamma_i)}}{2n}.
\end{align*}
By assumption the lattices $\hat{\Lambda}
_{P_{i,j}}(\Gamma_i)$, $i\in\mathbb{N}$, lie in a compact
subset of $\mathcal{P}(\mathfrak{n}_{P_{0,l(j)}})$. Thus by Lemma
\ref{LemEpsteinII} there exists 
a constant $c_1'(\Gamma_0)$ such that for each $i$ one has
$|C(\hat{\Lambda}
_{P_{i,j}}(\Gamma_i))|\leq c_1'(\Gamma_0)$. Since 
$\tilde \Lambda_{P_{i,j}}(\Gamma_0)=\Lambda_{P_{0,l(j)}}(\Gamma_0)$, the lattice
$\tilde{\Lambda}_{P_{i,j}}(\Gamma_i)$ is a sublattice of
$\Lambda_{P_{0,l(j)}}(\Gamma_0)$ of index 
$[\Gamma_0\cap N_{P_{i,j}}:\Gamma_i\cap N_{P_{i,j}}]$. Therefore one has
\begin{align*}
\vol(\tilde{\Lambda}
_{P_{i,j}}(\Gamma_i))=\vol(\Lambda_{P_{0,l(j)}}(\Gamma_0))[\Gamma_0\cap
N_{P_{i,j}}:\Gamma_i\cap N_{P_{i,j}}]\leq  c_1''(\Gamma_0)[\Gamma_0\cap
N_{P_{i,j}}:\Gamma_i\cap N_{P_{i,j}}],
\end{align*}
where $c_1''(\Gamma_0)$ is a constant which is independent of $i$. This proves
the first estimate. The second estimate follows immediately from the first one. 
\end{proof}

In the next proposition we estimate the number of cusps and the behaviour 
of the constant $\alpha(\Gamma_i)$ under sequences of 
normal coverings. 
\begin{prop}\label{Propalpha2}
Let $\Gamma_i$ be a sequence of normal subgroups of $\Gamma_0$ of finite index
$[\Gamma_0:\Gamma_i]$ such that $[\Gamma_0:\Gamma_i]\to\infty$ as 
$i\to\infty$ and such that each $\gamma_0\in\Gamma_0$, $\gamma_0\neq 1$,
belongs 
only to finitely many $\Gamma_i$. Assume that each $\Gamma_i$ satisfies
assumption \eqref{asGamma}. 
Then one has
\begin{align*}
\lim_{i\to\infty}\frac{\kappa(\Gamma_i)}{[\Gamma_0:\Gamma_i]}=0.
\end{align*}
If in addition the sequence $\Gamma_i$ is cusp-uniform, then one has
\begin{align*}
\lim_{i\to\infty}\frac{|\alpha(\Gamma_i)|}{[\Gamma_0:\Gamma_i]}=0.
\end{align*}
\end{prop}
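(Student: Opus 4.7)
The plan is to reduce both limits to one key fact: for each fixed $P_{0,l}\in\mathfrak{P}_{\Gamma_0}$, the index
\[
b_{i,l}:=[\Gamma_0\cap N_{P_{0,l}}:\Gamma_i\cap N_{P_{0,l}}]
\]
tends to $\infty$ as $i\to\infty$. I would establish this first. Since $\Gamma_0\cap N_{P_{0,l}}\cong\mathbb{Z}^{2n}$ is a lattice in $N_{P_{0,l}}$, it has only finitely many sublattices of any prescribed index. If $b_{i,l}$ were bounded along some subsequence, then infinitely many $\Gamma_i$'s would intersect $N_{P_{0,l}}$ in one and the same infinite sublattice, contradicting the hypothesis that every $\gamma_0\ne 1$ belongs to only finitely many $\Gamma_i$. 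This lattice-counting step is the only nontrivial ingredient; everything after it is normality-bookkeeping.

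Next, for the first limit, I would use normality to parametrize the $\Gamma_i$-conjugacy classes of $\Gamma_i$-cuspidal parabolics that are $\Gamma_0$-conjugate to $P_{0,l}$ by $\Gamma_i\backslash\Gamma_0/(\Gamma_0\cap P_{0,l})\cong\Gamma_0/\Gamma_i(\Gamma_0\cap P_{0,l})$. Since $P_{0,l}$ is also $\Gamma_i$-cuspidal, assumption \eqref{asGamma} gives $\Gamma_i\cap P_{0,l}=\Gamma_i\cap N_{P_{0,l}}$, and hence $[\Gamma_0\cap P_{0,l}:\Gamma_i\cap P_{0,l}]=c_l\,b_{i,l}$, where $c_l:=[\Gamma_0\cap P_{0,l}:\Gamma_0\cap N_{P_{0,l}}]$ is finite (the quotient embeds discretely in the compact factor $M_{P_{0,l}}$, since cuspidality forces any element of $\Gamma_0\cap P_{0,l}$ to act trivially on $A_{P_{0,l}}$). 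Summing yields
\[
\frac{\kappa(\Gamma_i)}{[\Gamma_0:\Gamma_i]}=\sum_{l=1}^{\kappa(\Gamma_0)}\frac{1}{c_l\,b_{i,l}}\xrightarrow{i\to\infty}0.
\]

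Finally, for the second limit, I would apply Proposition \ref{Propalpha}, which under the cusp-uniform assumption gives
\[
\frac{|\alpha(\Gamma_i)|}{[\Gamma_0:\Gamma_i]}\leq c_1(\Gamma_0)\frac{\kappa(\Gamma_i)}{[\Gamma_0:\Gamma_i]}+\frac{c_1(\Gamma_0)}{[\Gamma_0:\Gamma_i]}\sum_{j=1}^{\kappa(\Gamma_i)}\log[\Gamma_0\cap N_{P_{i,j}}:\Gamma_i\cap N_{P_{i,j}}].
\]
The first term vanishes by the previous step. For the second, normality gives $\gamma_j(\Gamma\cap N_{P_{i,j}})\gamma_j^{-1}=\Gamma\cap N_{P_{0,l(j)}}$ for $\Gamma\in\{\Gamma_0,\Gamma_i\}$, so each summand equals $\log b_{i,l(j)}$; grouping cusps by their $\Gamma_0$-conjugacy class — of which there are $[\Gamma_0:\Gamma_i]/(c_l b_{i,l})$ lying over $P_{0,l}$ — yields
\[
\frac{1}{[\Gamma_0:\Gamma_i]}\sum_{j=1}^{\kappa(\Gamma_i)}\log b_{i,l(j)}=\sum_{l=1}^{\kappa(\Gamma_0)}\frac{\log b_{i,l}}{c_l\,b_{i,l}}\xrightarrow{i\to\infty}0,
\]
using once more that $b_{i,l}\to\infty$. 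This completes the proof.
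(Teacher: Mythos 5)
Your proof is correct and follows essentially the same route as the paper: normality plus \eqref{asGamma} to count the double cosets $\Gamma_i\backslash\Gamma_0/(\Gamma_0\cap P_{0,l})$, the conjugation identity for $\Gamma_i\cap N_{P_{i,j}}$, the divergence of $[\Gamma_0\cap N_{P_{0,l}}:\Gamma_i\cap N_{P_{0,l}}]$, and an appeal to Proposition \ref{Propalpha}. The only difference is cosmetic — you spell out the finite-sublattice argument for why these indices diverge (which the paper merely asserts) and keep exact equalities with the constants $c_l$ where the paper settles for the inequality $[\Gamma_0\cap P_{0,l}:\Gamma_i\cap P_{0,l}]\geq[\Gamma_0\cap N_{P_{0,l}}:\Gamma_i\cap N_{P_{0,l}}]$.
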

\begin{proof}
Using that each $\Gamma_i$, $i\ge 1$, satisfies
\eqref{asGamma} and $\Gamma_i$ is normal in $\Gamma_0$, one has 
\begin{align*}
\#\{\Gamma_i\backslash\Gamma_0/\Gamma_0\cap
P_{0,l}\}=\frac{[\Gamma_0:\Gamma_i]}{[\Gamma_0\cap P_{0,l}:\Gamma_i\cap
P_{0,l}]}\leq \frac{[\Gamma_0:\Gamma_i]}{[\Gamma_0\cap
N_{P_{0,l}}:\Gamma_i\cap
N_{P_{0,l}}]}
\end{align*}
for each $l=1,\dots,\kappa(\Gamma_0)$.
Thus using \eqref{preim}, one can estimate
\begin{align*}
\frac{\kappa(\Gamma_i)}{[\Gamma_0:\Gamma_i]}=\frac{\sum_{P_{0,l}\in\mathfrak{P}_
{
\Gamma_0}}\#\{\Gamma_i\backslash\Gamma_0/\Gamma_0\cap
P_{0,l}\}}{[\Gamma_0:\Gamma_i]}\leq
\sum_{P_0\in\mathfrak{P}_{\Gamma_0}}\frac{1}{[\Gamma_0\cap
N_{P_{0,l}}:\Gamma_i\cap
N_{P_{0,l}}]}.
\end{align*}

Moreover, for each $l=1,\dots,\kappa(\Gamma_0)$ and each 
$j=1,\dots,\kappa(\Gamma_i)$ one has 
\begin{align*}
\Gamma_0\cap N_{P_{i,j}}=\gamma_{j}(\Gamma_0\cap
N_{P_{0,l(j)}})\gamma_j^{-1},\quad \Gamma_i\cap
N_{P_{i,j}}=\gamma_{j}(\Gamma_i\cap N_{P_{0,l(j)}})\gamma_j^{-1},
\end{align*}
where the second equality is due to the assumption that $\Gamma_i$ is normal in
$\Gamma_0$. 
Thus applying \eqref{preim}, one can estimate
\begin{align*}
&\frac{1}{[\Gamma_0:\Gamma_i]}\sum_{j=1}^{\kappa(\Gamma_i)}\log[
\Gamma_0\cap N_{P_{i,j}}:\Gamma_i\cap N_{P_{i,j}}]\\
&=\frac{\sum_{P_{0,l}\in\mathfrak{P}_{
\Gamma_0}}\#\{\Gamma_i\backslash\Gamma_0/\Gamma_0\cap
P_{0,l}\}\log{[\Gamma_{0}\cap
N_{P_{0,l}}:\Gamma_{i}\cap N_{P_{0,l}}]}}{[\Gamma_0:\Gamma_i]}\\ &\leq
\sum_{P_0\in\mathfrak{P}_{\Gamma_0}}\frac{\log{[\Gamma_{0}\cap
N_{P_{0,l}}:\Gamma_{i}\cap N_{P_{0,l}}]}}{[\Gamma_0\cap
N_{P_{0,l}}:\Gamma_i\cap
N_{P_{0,l}}]}.
\end{align*}

The condition that each $\gamma_0\in\Gamma_0-\{1\}$, $\gamma_0\neq 1$, belongs
only to finitely many $\Gamma_i$ implies that
$[\Gamma_0\cap N_{P_{0,l}}:\Gamma_i\cap
N_{P_{0,l}}]$ goes to $\infty$ as $i\to\infty$. Thus the first statement 
and together with the previous proposition also the second one are proved.
\end{proof}

\section{Proof of the main results}\label{secmainres}
\setcounter{equation}{0}
We keep the assumptions of the previous sections. So $\Gamma_0$ is a lattice in
$G$ and $\Gamma_1$ is a torsion-free subgroup of finite index of $\Gamma_0$,
which satisfies \eqref{asGamma}. We let 
$X_0:=\Gamma_0\backslash\widetilde{X}$ and
$X_i:=\Gamma_i\backslash\widetilde{X}$. We assume that a set 
$\mathfrak{P}_{\Gamma_0}$ of representatives of $\Gamma_0$-conjugacy classes of
$\Gamma_0$-cuspidal parabolic subgroups of $G$ is fixed. Then 
for each $\tau\in\Rep(G)$, $\tau\neq\tau_{\theta}$,  let
$\Tr_{\reg;X_1}(e^{-t\Delta_{X_1,p}(\tau)})$ be the the regularized trace of
$e^{-t\Delta_{X_1,p}(\tau)}$, as defined by \ref{Defregtr}. 
It follows from Proposition \eqref{estimregtr} that there exist constants 
$C,c>0$ such that 
\begin{align}\label{expdec}
\big|\Tr_{\reg; X_1}\left(e^{-t\Delta_{X_1,p}(\tau)}\right)\big|\leq Ce^{-ct},
\end{align}
for $t\geq 1$. Applying [Proposition 6.9]\cite{MP2}, it follows immediately from
the definition of $\Tr_{\reg;X_1}(e^{-t\Delta_{X_1,p}(\tau)})$ 
that there is an asymptotic expansion
\begin{align*}
\Tr_{\reg;X_1}(e^{-t\Delta_{X_1,p}(\tau)})\sim\sum_{j=0}^\infty
a_{j}t^{j-\frac{d}{2}}+\sum_{j=0}^\infty b_{j}t^{
j-\frac{1}{2}}\log{t}+\sum_{j=0}^\infty c_j t^j
\end{align*}
as $t\to +0$. Put
\begin{align*}
K_{X_1}(t,\tau):=\frac{1}{2}\sum_{p=1}^d (-1)^p p \Tr_{\reg;X_1}
\left(e^{-t\Delta_{X_1,p}(\tau)}\right).
\end{align*}
Then it follows that we can define the analytic torsion $T_{X_1}(\tau)$ by 
\[
\log T_{X_1}(\tau)=\frac{d}{ds}\left(\frac{1}{\Gamma(s)}\int_0^\infty
K_{X_1}(t,\tau) t^{s-1}\,dt\right)\Bigg|_{s=0},
\]
where the integral converges in the half-plane $\Re(s)>d/2$ and is defined 
near $s=0$ by analytic continuation. 
Let $T>0$. Then it follows from \eqref{expdec} that $\int_T^\infty
K_{X_1}(t,\tau)
t^{s-1}\;dt$ is an entire function of $s$. Therefore we have
\begin{equation}\label{anator3}
\log T_{X_1}(\tau)=\frac{d}{ds}\left(\frac{1}{\Gamma(s)}\int_0^T
K_{X_1}(t,\tau) t^{s-1}\,dt\right)\bigg|_{s=0}
+\int_T^\infty K_{X_1}(t,\tau) t^{-1}\,dt.
\end{equation}

To proceed further, we first need to estimate the integrand 
of the hyperbolic term \eqref{hyperbcon}. Recall that for a lattice $\Gamma$
in $G$ we denote by $\ell(\Gamma)$ the length of the shortest closed 
geodesic of $\Gamma\backslash\widetilde{X}$.

\begin{lem}\label{Lemhyp}
Let $h_t^{\tau.p}\in C^\infty(G)$ be defined by \eqref{tr-heatker}.
For each $T\in(0,\infty)$ there exists a constant $C>0$, depending on $T$ and
$X_0$ only, such that for all hyperbolic manifolds 
$X_1=\Gamma_1\backslash\widetilde{X}$, 
which are finite coverings of $X_0$, and all $g\in G$ one has
\begin{align*}
\left|\sum_{\gamma\in\Gamma_{1,\s}-\{1\}}
h_t^{\tau,p}(g^{-1}\gamma g)\right|\leq C
e^{-\frac{\ell(\Gamma_0)^2}{32t}}e^{-\frac{\ell(\Gamma_1)^2}{8t}}
\end{align*}
for all $t\in (0,T]$. 
\end{lem}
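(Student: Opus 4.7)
The plan is to combine a pointwise Gaussian bound for the heat kernel on $\widetilde X=\mathbb H^d$ with the lower bound $d(gK,\gamma gK)\ge\ell(\gamma)\ge\ell(\Gamma_1)$ available for semisimple $\gamma$, then to split the exponent so as to exhibit both claimed factors $e^{-\ell(\Gamma_0)^2/(32t)}$ and $e^{-\ell(\Gamma_1)^2/(8t)}$, and finally to control the residual lattice sum uniformly in $g$ by dominating $\Gamma_{1,\s}$ by the fixed set $\Gamma_{0,\s}$.

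Concretely, I would first invoke the standard Gaussian estimate $|h_t^{\tau,p}(g)|\le C_T\, e^{-d(eK,gK)^2/(4t)}$ for $t\in(0,T]$, with $C_T$ depending only on $T$ and $\tau$; the polynomial $t^{-d/2}$ prefactor of the hyperbolic heat kernel is absorbed using $d(gK,\gamma gK)\ge\ell(\Gamma_0)>0$ for every nontrivial semisimple $\gamma$. Then, writing $d_\gamma:=d(gK,\gamma gK)$ and using $d_\gamma\ge\ell(\Gamma_1)\ge\ell(\Gamma_0)$ (the last inequality because $\Gamma_1\subset\Gamma_0$), a direct calculation based on the two chained bounds $d_\gamma^2/(4t)-\ell(\Gamma_1)^2/(8t)\ge d_\gamma^2/(8t)$ and $d_\gamma^2/(8t)-\ell(\Gamma_0)^2/(32t)\ge 3d_\gamma^2/(32t)$ yields
\[
\frac{d_\gamma^2}{4t}\ge\frac{\ell(\Gamma_0)^2}{32t}+\frac{\ell(\Gamma_1)^2}{8t}+\frac{3\,d_\gamma^2}{32t},
\]
so that
\[
|h_t^{\tau,p}(g^{-1}\gamma g)|\le C_T\,e^{-\ell(\Gamma_0)^2/(32t)}\,e^{-\ell(\Gamma_1)^2/(8t)}\,e^{-3d_\gamma^2/(32t)}.
\]

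It remains to show that $\sum_{\gamma\in\Gamma_{1,\s}-\{1\}}e^{-3d(gK,\gamma gK)^2/(32t)}$ is bounded uniformly in $g\in G$ and $t\in(0,T]$ by a constant depending only on $X_0$ and $T$, and this is the main obstacle. Using $\Gamma_{1,\s}\subseteq\Gamma_{0,\s}$, it suffices to estimate the analogous sum over $\Gamma_{0,\s}$, which is independent of the covering. This sum is not the full lattice sum $\sum_{\gamma\in\Gamma_0}e^{-c\,d(x,\gamma x)^2}$, which fails to be uniformly bounded in $x$ because of parabolic accumulation near the cusps; restricting to semisimple elements is precisely what saves the day. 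To execute the bound I would organise $\Gamma_{0,\s}$ by conjugacy classes via the bijection $\CC(\Gamma_0)_{\s}\leftrightarrow\mathcal C(X_0)$ and use \eqref{Estgeod} to control the number of classes of length at most $R$, then apply the hyperbolic distance formula $\cosh d(x,\gamma x)=\cosh\ell(\gamma)\cosh^2 d(x,A_\gamma)-\sinh^2 d(x,A_\gamma)$ to reduce the inner sum over each conjugacy class to a theta-type series over the orthogonal projection of $x$ onto the corresponding axis. A packing argument—using that the axes of loxodromic elements remain at bounded distance from a fixed compact core of $X_0$, so their contributions decay as $x$ escapes into a cusp—combined with a dyadic decomposition in $R=d(x,\gamma x)$ and Gaussian integration then yields the required uniform constant, completing the proof.
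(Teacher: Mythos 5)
Your first two steps coincide with the paper's: the Gaussian bound $|h_t^{\tau,p}(g)|\le C_T\,e^{-d^2(gK,eK)/(4t)}$ for $t\in(0,T]$ (via Donnelly's estimate for the scalar heat kernel, with the $t^{-d/2}$ prefactor absorbed using $d(gK,\gamma gK)\ge\ell(\Gamma_0)>0$), the splitting of the exponent using $d(gK,\gamma gK)\ge\ell(\gamma)\ge\ell(\Gamma_1)\ge\ell(\Gamma_0)$ to peel off the two claimed factors, and the domination of the sum over $\Gamma_{1,\s}$ by the corresponding sum over $\Gamma_{0,\s}$. Your arithmetic there is correct. The issue is the final step, which you yourself flag as the main obstacle: the bound on $\sum_{\gamma\in\Gamma_{0,\s}-\{1\}}e^{-c\,d(gK,\gamma gK)^2}$ uniformly in $g$.

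Your sketch of that step has a genuine gap. First, \eqref{Estgeod} counts $\Gamma_0$-conjugacy classes (equivalently, closed geodesics), not group elements; to bound the sum you must in addition control, for each class, the number of its elements $\gamma$ with $d(x,\gamma x)\le R$, uniformly in $x\in\widetilde X$ --- and this is precisely the delicate, $x$-dependent part that your ``theta-type series over the projection onto the axis'' leaves unexecuted (the uniformity as $x$ escapes into a cusp is nontrivial because the $\Gamma_0$-orbit of $x$ is not uniformly separated there, owing to parabolic elements). Second, the fact you invoke to dispatch it --- that the axes of loxodromic elements stay within bounded distance of a fixed compact core of $X_0$ --- is false: closed geodesics of a finite-volume hyperbolic manifold penetrate arbitrarily deep into the cusps (already on the modular surface, geodesics attached to quadratic irrationals with large partial quotients do so). The paper avoids all of this with a much shorter, element-level argument: from \eqref{FBI} and \eqref{FBII} one obtains an $\epsilon>0$ such that $B_\epsilon(x)\cap\gamma B_\epsilon(x)=\emptyset$ for every $x\in\widetilde X$ and every $\gamma\in\Gamma_{0,\s}-\{1\}$, since in the cusp neighbourhoods only parabolic and elliptic elements have small displacement; comparing the disjoint union of the balls $\gamma B_\epsilon(x)$ over $\{\gamma\in\Gamma_{0,\s}\colon d(x,\gamma x)\le R\}$ with $B_{R+\epsilon}(x)$ and using $\vol(B_r(x))\le Ce^{2nr}$ yields $\#\{\gamma\in\Gamma_{0,\s}\colon d(x,\gamma x)\le R\}\le C_{X_0}e^{2nR}$ uniformly in $x$, after which the Gaussian beats the exponential growth and the residual sum converges uniformly in $g$. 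You should replace your conjugacy-class decomposition by this direct counting argument.
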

\begin{proof}
Let $\nu_p(\tau)$ be the representation of $K$ defined by \eqref{nutau}. Let
$\widetilde E_{\nu_p(\tau)}$ be the associated homogeneous vector
bundle over $\widetilde X$ equipped with the canonical 
metric connection \cite[section 4]{MP2}. Let $\widetilde\Delta_{\nu_p(\tau)}$ be
the  
Bochner-Laplace operator acting on 
$C^\infty(\widetilde X,\widetilde E_{\nu_p(\tau)})$. Then 
on $C^\infty(G,\nu_p(\tau))$, the action of this operator is given by
\[
\widetilde\Delta_{\nu_p(\tau)}=-R(\Omega)+\nu_p(\tau)(\Omega_K),
\]
where $\Omega_K$ is the Casimir eigenvalue of $\kL$ with 
respect to the restriction of the normalized Killing form $\gL$ to $\kL$, see
\cite[Proposition 1.1]{Mi1}. Thus 
by \eqref{FlatH} there exists  
an endomorphism $E_p(\tau)$ of $\Lambda^p\pL^*\otimes V_\tau$ such that
\[
\widetilde\Delta_p(\tau)=\widetilde\Delta_{\nu_p(\tau)}+E_p(\tau).
\]
Moreover $E_p(\tau)$ commutes with $\widetilde\Delta_{\nu_p(\tau)}$. Let
\[
H^{\nu_p(\tau)}_t\colon G\to \End(\Lambda^p\pL^*\otimes V_\tau)
\] 
be the kernel of the heat operator 
$e^{-t\widetilde\Delta_{\nu_p(\tau)}}$. Then it follows that
\begin{equation}\label{hkestim}
H_t^{\tau,p}=e^{-t E_p(\tau)}\circ H_t^{\nu_p(\tau)}.
\end{equation}
Let $H^0_t(g)$ be the heat kernel for the Laplacian on functions on 
$\widetilde X$. Using \eqref{hkestim} and \cite[Proposition 3.1]{MP1} it 
follows that there exist constants $C>0$ and $c\in\R$ such that
\[
\| H_t^{\tau,p}(g)\|\le C e^{ct} H^0_t(g),\quad g\in G,\;t>0.
\]
Hence we get
\[
|h_t^{\tau,p}(g)|\le C\dim(\tau)e^{ct} H^0_t(g),\quad g\in G,\;t>0.
\]
By \cite{Do1} there exists $C_1>0$ which depends only on $T$ such 
that for each $t\in (0,T]$ one has 
\[
H_t^0(g)\le C_1' t^{-d/2}\exp\left(-\frac{d^2(gK,K1)}{4t}\right)
\]
for $0<t\le T$. The constant $C_1'$ depends only on $T$.
Thus we get
\begin{equation}\label{sum}
\begin{split}
\sum_{\gamma\in\Gamma_{1,\s}-\{1\}}|h_t^{\tau,p}(g^{-1}\gamma g)|&\le C_2
t^{-d/2}
e^{cT} 
\sum_{\gamma\in\Gamma_{1,\s}-\{1\}} e^{-d^2(\gamma gK,gK)/(4t)}\\
&\le C_3 e^{-\ell(\Gamma_i)^2/(8t)}
e^{-\ell(\Gamma_0)^2/(32t)}\sum_{\gamma\in\Gamma_{0,\s}-\{1\}} e^{-d^2(\gamma
gK,gK)/(16T)},
\end{split}
\end{equation}
where $C_2$, $C_3$ are constants which depend only on $T$. It remains to show
that
the last sum converges and can be estimated independently of $g$ . 
For $r\in(0,\infty)$ and $x\in\widetilde{X}$ we let $B_r(x)$ be the metric ball 
of radius $r$ around $x$. There exists a constant $C>0$ such that 
\begin{align}\label{volgr}
\vol (B_r(x))\leq C e^{2nr}
\end{align}
for all $r\in (0,\infty)$. 
It easily follows from \eqref{FBI} and \eqref{FBII} that there
exists 
an $\epsilon>0$ such that for all $x\in\widetilde{X}$ and all
$\gamma\in\Gamma_{0,\s}$, $\gamma\neq 1$ 
one has $B_\epsilon(x)\cap \gamma B_\epsilon(x)=\emptyset$. Thus for each
$x\in\widetilde{X}$ the union
\begin{align*}
\bigsqcup_{\gamma\in\Gamma_{0,\s}\colon d(x,\gamma x)\leq R}\gamma B_\epsilon(x)
\end{align*}
is disjoint and contained in $B_{\epsilon+R}(x)$. Using \eqref{volgr} it follows
that 
there exists a constant $C_{X_0}>0$, depending on $X_0$, such that  
for all $R\in (0,\infty)$ and all $x\in\widetilde{X}$ 
one has
\begin{align*}
\#\{\gamma\in\Gamma_{0,\s}\colon d(x,\gamma x)\leq R\}\leq C_{X_0}e^{2nR}.
\end{align*}
Applying \eqref{sum} the Lemma follows.
\end{proof}

Applying the preceding lemma we obtain the following estimate for the
regularized
trace which is uniform with respect to coverings.

\begin{prop}\label{estimregtr2}
There exists a constant $C>0$  such that for each hyperbolic manifold 
$X_1=\Gamma_1\backslash\widetilde{X}$, which is a finite
covering of $X_0$, and for which $\Gamma_1$ satisfies \eqref{asGamma}, one has
\begin{align*}
|\Tr_{\reg;X_1}\left(e^{-\Delta_{X_1,p}(\tau)}\right)|\leq
C(\vol(X_1)+\kappa(X_1)+
\alpha(X_1)), 
\end{align*}
where $\kappa(X_1)$ is the number of cusps of $X_1$ and $\alpha(X_1)$ is as in
\eqref{Defalpha}.
\end{prop}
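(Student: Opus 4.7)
\medskip

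\textbf{Proof proposal.} The plan is to specialize the Selberg trace formula \eqref{Trfrml} to $t=1$ and estimate each of the four geometric distributions separately, showing that the first is controlled by $\vol(X_1)$, the next two by $\vol(X_1)+\kappa(X_1)$, and the last by $|\alpha(X_1)|$. Concretely, at $t=1$ one has
\[
\Tr_{\reg;X_1}\!\left(e^{-\Delta_{X_1,p}(\tau)}\right)
=I_{X_1}(h_1^{\tau,p})+H_{X_1}(h_1^{\tau,p})+T'_{X_1}(h_1^{\tau,p})+S_{X_1}(h_1^{\tau,p}),
\]
so it suffices to bound each summand by a constant (independent of the covering $X_1$) times one of $\vol(X_1)$, $\kappa(X_1)$, or $|\alpha(X_1)|$.

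First I would handle $I_{X_1}(h_1^{\tau,p})=\vol(X_1)\,h_1^{\tau,p}(1)$, which is obviously bounded by $|h_1^{\tau,p}(1)|\cdot\vol(X_1)$, a $\vol(X_1)$-bound with a universal constant depending only on $\tau$ and $p$. Next, for the hyperbolic term I apply Lemma \ref{Lemhyp} with $T=1$ pointwise under the integral \eqref{hyperbcon}; since $\Gamma_1\subset\Gamma_0$ every semisimple non-identity element of $\Gamma_1$ is also semisimple in $\Gamma_0$, so $\ell(\Gamma_1)\geq\ell(\Gamma_0)$, and Lemma \ref{Lemhyp} gives
\[
\Bigl|\sum_{\gamma\in\Gamma_{1,\s}-\{1\}}h_1^{\tau,p}(g^{-1}\gamma g)\Bigr|
\leq C\,e^{-\ell(\Gamma_0)^2/32}\,e^{-\ell(\Gamma_0)^2/8},
\]
with $C$ depending only on $X_0$. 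Integrating this uniform pointwise bound over $\Gamma_1\backslash G$ yields $|H_{X_1}(h_1^{\tau,p})|\leq C'\vol(X_1)$. The weighted term $T'_{X_1}(h_1^{\tau,p})$ is, by its very definition \eqref{weigpara}, the product of $\kappa(X_1)$ with the integral $\int_K\int_N h_1^{\tau,p}(knk^{-1})\log\|\log n\|\,dn\,dk$, which is a fixed real number depending only on $\tau$ and $p$ (the integrand decays rapidly in $\|\log n\|$ by the estimate \eqref{hkestim} and \cite{Do1}), so $|T'_{X_1}(h_1^{\tau,p})|\leq C''\kappa(X_1)$. Finally, for $S_{X_1}(h_1^{\tau,p})$ I invoke the explicit form \eqref{Snew}, which immediately gives
\[
|S_{X_1}(h_1^{\tau,p})|\leq |\alpha(X_1)|\sum_{\substack{\sigma\in\hat M\\ [\nu_p(\tau):\sigma]\neq 0}}\frac{e^{-(\tau(\Omega)-c(\sigma))}\dim(\sigma)}{\sqrt{4\pi}}=C'''|\alpha(X_1)|,
\]
since the sum over $\sigma$ is finite (only finitely many $\sigma\in\hat M$ satisfy $[\nu_p(\tau):\sigma]\neq 0$) and is a constant depending only on $\tau$ and $p$. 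Summing the four estimates and taking the maximum constant yields the claim.

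The main potential obstacle is the hyperbolic term: a priori the sum of heat-kernel values over $\Gamma_{1,\s}-\{1\}$ could grow with the covering, but this is precisely what Lemma \ref{Lemhyp} rules out by giving a pointwise bound that is independent of $\Gamma_1$ once $\ell(\Gamma_1)\geq\ell(\Gamma_0)$. The bookkeeping for the parabolic term $S_{X_1}$ is essentially already done in \eqref{Snew}, where the compensation between the $\log t_j$ terms in Definition \ref{Defregtr} and those implicit in the Epstein constant has been carried out; the fact that $\alpha(X_1)$ emerges as a single coefficient is exactly what makes the third bound possible. Beyond these, no delicate spectral analysis is required, since we only evaluate at the fixed time $t=1$.
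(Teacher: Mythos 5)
Your proposal is correct and follows essentially the same route as the paper: put $t=1$ in the trace formula \eqref{Trfrml} and bound the identity and hyperbolic terms by $\vol(X_1)$ (the latter via Lemma \ref{Lemhyp}), the weighted term by $\kappa(X_1)$ via \eqref{weigpara}, and the remaining parabolic term by $|\alpha(X_1)|$ via \eqref{Snew}. The additional observations you supply (e.g.\ $\ell(\Gamma_1)\ge\ell(\Gamma_0)$ and the convergence of the weighted orbital integral) are accurate and only make explicit what the paper leaves implicit.
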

\begin{proof}
We put $t=1$ in \eqref{Trfrml} and estimate the terms on the right hand 
side. The identity contribution \eqref{identcon} can be estimated by $C_1
\vol(X_1)$. By \eqref{weigpara}, the
third term can be estimated by $C_2\kappa(X_1)$. Using \eqref{Snew}, it
follows that the forth term 
is bounded by $C_3\alpha(X_1)$. Finally, \eqref{hyperbcon} and Lemma 
\ref{Lemhyp} imply that the hyperbolic term is bounded by $C_4\vol(X_1)$. The 
constants $C_i>0$, $i=1,\cdots,4$, are all independent of $X_1$. This finishes 
the proof.
\end{proof}

Now we can deal with the second integral in \eqref{anator3}. 
Using Proposition \ref{estimregtr}, Proposition
\ref{estimregtr2}, assumption \eqref{condseq} and Proposition \ref{Propalpha}, 
it follows that there exists a $C,c>0$ such that for all finite coverings
$\pi\colon X_1\to X_0$ as above we have
\begin{equation}\label{largetime1}
\frac{1}{\vol(X_1)}\left|\int_T^\infty K_{X_1}(t,\tau) t^{-1}\,dt\right|\le 
C e^{-cT}
\end{equation}
for all $T\geq 10$. 

It remains to treat the first term on the right hand side of \eqref{anator3}. 
For this purpose we use the geometric side of the trace formula 
as it is given in \eqref{Trfrml}. Therefore, put
\begin{equation}
k_t^\tau:=\frac{1}{2}\sum_{p=1}^d (-1)^p p h_t^{\tau,p}.
\end{equation}
It follows from \cite[section 9]{MP2} that the Mellin transform
$\int_0^\infty k_t^\tau(1) t^{s-1}\;dt$ converges absolutely and uniformly on
compact subsets of $\Re(s)>d/2$, and admits a meromorphic extension to $\C$,
which is holomorphic at $s=0$. Let
\begin{equation}\label{l2-tor2}
t^{(2)}_{\widetilde X}(\tau):=\frac{d}{ds}\left(\frac{1}{\Gamma(s)}\int_0^\infty
k_t^\tau(1) t^{s-1}\,dt\right)\bigg|_{s=0}.
\end{equation}
Then in analogy to the compact case \eqref{l2-tor}, the $L^2$-torsion 
$T_{X_1}^{(2)}(\tau)\in\R^+$ is given by 
\[
\log T_{X_1}^{(2)}(\tau)=\vol(X_1)t^{(2)}_{\widetilde X}(\tau).
\]
For details we refer to \cite[section 9]{MP2}.
Furthermore, it follows from \cite[equation 9.4]{MP2} that there exist $C,c>0$
such that
\[
\left|\int_T^\infty k_t^\tau(1) t^{-1}\,dt\right|\le C e^{-cT}
\]
for $T>0$. Hence we get
\begin{equation}\label{limident}
\frac{d}{ds}\left(\frac{1}{\Gamma(s)}\int_0^T 
I_{X_1}(k_t^\tau)
t^{s-1}\,dt\right)\bigg|_{s=0}=\vol(X_1)\cdot 
(t^{(2)}_{\widetilde X}(\tau)+O\left(e^{-cT}\right)).
\end{equation}
Now let $\Gamma_i$, $i\in\N$, be a sequence of torsion-free subgroups of finite
index of $\Gamma_0$, which satisfy the assumptions of Theorem \ref{Mainthrm}.
Firstly, by \eqref{limident} we have
\begin{equation}\label{limident1}
\lim_{i\to\infty}\frac{1}{[\Gamma_0:\Gamma_i]}
\frac{d}{ds}\left(\frac{1}{\Gamma(s)}\int_0^T I_{X_i}(k_t^\tau)
t^{s-1}\,dt\right)\bigg|_{s=0}=\vol(X_0)\cdot 
(t^{(2)}_{\widetilde X}(\tau)+O\left(e^{-cT}\right)).
\end{equation}
Let $(\Gamma_i)_s$ be the set of semi-simple elements in $\Gamma_i$. By
\eqref{hyperbcon} the hyperbolic contribution is given by
\[
H_{X_i}(k_t^\tau)=\int_{\Gamma_i\backslash G}\sum_{\gamma\in(\Gamma_i)_s-\{1\}}
k_t^\tau(g^{-1}\gamma g)\,d\dot g.
\]
It follows from Lemma \ref{Lemhyp} that 
\[
\frac{d}{ds}\left(\frac{1}{\Gamma(s)}\int_0^T
H_{X_i}(k_t^\tau)t^{s-1}\;dt\right)
\bigg|_{s=0}=\int_0^T H_{X_i}(k_t^\tau) t^{-1}\;dt
\]
and that there exists a constant $C_2$, depending on $T$, such that
\[
\left|\int_0^T H_{X_i}(k_t^\tau) t^{-1}\;dt\right|\le
C_2\vol(X_i)e^{-\frac{\ell(\Gamma_i)^2}{8T}}.
\]
Hence if $\ell(\Gamma_i)\to\infty$ as $i\to\infty$, one has
\begin{equation}\label{limhyperb}
\lim_{i\to\infty}\frac{1}{[\Gamma_0\colon\Gamma_i]}\frac{d}{ds}
\left(\frac{1}{\Gamma(s)}\int_0^T
H_{X_i}(k_t^\tau)t^{s-1}\;dt\right)\bigg|_{s=0}=0.
\end{equation}

Next we study the term associated to $T'_{X_i}(k_t^\tau)$, defined 
in \eqref{weigpara}. We let
$J_{X_i}(k_t^\tau)$ and $\mathcal{I}_{X_i}(k_t^\tau)$ be 
defined according to \cite[(6.13), (6.15)]{MP2}, where the
subindex $X_i$ indicates that these distributions depend
on the manifold $X_i$. Then by definition we have
\begin{align*}
T'_{X_i}(k_t^\tau)=\kappa(X_i)\mathcal{I}_{X_i}(k_t^\tau)+J_{X_i}(k_t^\tau).
\end{align*}
Using the results of \cite[section 6]{MP2}, it follows that there is an
asymptotic expansion 
\begin{align*}
T'_{X_i}(k_t^\tau)\sim
\sum_{k=0}^\infty a_k t^{k-(d-2)/2}+\sum_{k=0}^\infty b_k t^{k-1/2}\log t +c_0
\end{align*}
as $t\to 0$. Thus for $\Real(s)>(d-2)/2$, the integral 
\[
\int_0^T T'_{X_i}(k_t^\tau) t^{s-1}\,dt
\]
converges and has a meromorphic extension to $\C$, which at $s=0$ has at most 
a simple pole. Applying the definition of $T'_{X_i}$ it follows that there 
exists a function $\phi(T,\tau)$ such that 
\[
\frac{d}{ds}\left(\frac{1}{\Gamma(s)}\int_0^T T'_{X_i}(k_t^\tau) t^{s-1}\,
dt\right)\bigg|_{s=0}=\phi(T,\tau)\cdot\kappa(X_i).
\]
Thus if $\lim_{i\to\infty}\kappa(X_i)/[\Gamma_0:\Gamma_i]=0$, we obtain
\begin{equation}\label{limweightint}
\lim_{i\to\infty}\frac{1}{[\Gamma_0\colon\Gamma_i]}
\frac{d}{ds}\left(\frac{1}{\Gamma(s)}\int_0^T T'_{X_i}(k_t^\tau) t^{s-1}\,
dt\right)\bigg|_{s=0}=0.
\end{equation}
Finally, by \eqref{Snew}
the integral
\begin{align*}
\int_{0}^T t^{s-1}\mathcal{S}_{X_i}(k_t^\tau)dt
\end{align*}
converges absolutely for $s\in \C$ with $\Real(s)>\frac{1}{2}$ and has a
meromorphic extension 
to $\C$ with an at most a simple pole at $s=0$. Moreover, it follows from
\eqref{Snew} that there exists a function $\psi(T,\tau)$ such that 
\[
\frac{d}{ds}\left(\frac{1}{\Gamma(s)}\int_0^T
\mathcal{S}_{X_i}(k_t^\tau)t^{s-1}\,
dt\right)\bigg|_{s=0}=\psi(T,\tau)\cdot\alpha(\Gamma_i),
\]
where $\alpha(\Gamma_i)$ is as in \eqref{Defalpha}. By assumption
\eqref{condseq} and Proposition \ref{Propalpha} it follows that 
\begin{align*}
\lim_{i\to\infty}\frac{1}{[\Gamma_0:\Gamma_i]}\frac{d}{ds}\left(\frac{1}{
\Gamma(s)}\int_0^T
\mathcal{S}_{X_i}(k_t^\tau)t^{s-1}\,
dt\right)\bigg|_{s=0}=0.
\end{align*}
Combined with \eqref{limident1}, \eqref{limhyperb}, and \eqref{limweightint}
we get
\begin{equation}\label{limitfirst}
\lim_{i\to\infty}\frac{1}{[\Gamma_0:\Gamma_i]}
\frac{d}{ds}\left(\frac{1}{\Gamma(s)}\int_0^T K_{X_1}(t,\tau) t^{s-1}\,dt\right)
\bigg|_{s=0}=\vol(X_0)\cdot(t^{(2)}_{\widetilde X}(\tau)+O(e^{-cT})).
\end{equation}
Finally, combining  \eqref{limitfirst}, \eqref{anator3} and \eqref{largetime1}, 
and using that $T>0$ is arbitrary, Theorem \ref{Mainthrm} follows.
\hfill $\square$

\bigskip
Now assume that $\Gamma_i$ is normal in $\Gamma_0$ and each 
$\gamma\in\Gamma_0$ belongs only to finitely many $\Gamma_i$. Note that
$\ell(\gamma)$ depends only on the $\Gamma_0$-conjugacy class. Since 
by \eqref{Estgeod}, for each $R>0$  there are only finitely many 
conjugacy classes $[\gamma]\in\CC(\Gamma_{0,\s})$ with $\ell(\gamma)\leq R$, one
has  $\lim_{i\to\infty}\ell(\Gamma_i)=\infty$. Thus, if one applies 
Proposition \ref{Propalpha2} and the preceding arguments, Theorem
\ref{Mainthrm2} follows.

\section{Principal congruence subgroups of $\SO^0(d,1)$}\label{secSO}
\setcounter{equation}{0}
In this section we apply Theorem \ref{Mainthrm2} to 
the case of principal congruence subgroups of $\SO^0(d,1)$ and prove 
Corollary \ref{Korcong1}. Therefore,
throughout this section we let $G:=\SO^0(d,1)$, $d$ odd, $d=2n+1$. Let
$K=\SO(d)$, regarded 
as a subgroup of $G$. Then $K$ is a maximal compact subgroup of $G$. 

We realize the standard parabolic subalgebra $\mathfrak{p}$ of $\mathfrak{g}$ 
as follows. 
Denote by $E_{i,j}$ the matrix in $\gL$ whose entry at the
i-th row 
and j-th column is equal to 1 and all of whose other entries are equal to 0 and
let  
$H_1:=E_{1,2}+E_{2,1}$. Let $\aL:=\R H_1$
and let 
\begin{align}\label{eqn}
\nL=\left\{X(v):=\begin{pmatrix}0&0&v^{t}\\0&0&v^{t}\\
v&-v&0\end{pmatrix},\quad
v\in\R^{d-1}\right\}.
\end{align}
Then for the standard ordering of the restricted roots of $\aL$ in $\gL$, $\nL$
is the direct sum of the 
positive restricted root spaces. 
We let
\[
\gL=\nL\oplus\aL\oplus\kL
\]
be the associated Iwasawa decomposition. Let $N$ be the connected Lie group with
Lie algebra $\nL$ and let $A:=\exp(\aL)$. Let $M$ be
the
centralizer of $A$ in $K$. Then
\begin{align*}
P=MAN
\end{align*}
is a parabolic subgroup of $G$.

For $v\in\R^{d-1}$ one has
\begin{align}\label{EqN}
\exp(X(v))=1+X(v)+\frac{X^2(v)}{2}=\begin{pmatrix}1+\|v\|^2/2&-\|v\|^2/2&v^{t}
\\ \|v\|^2/2&1-\|v\|^2/2&v^{t}\\ v&-v&I_{d-1}\end{pmatrix} ,
\end{align}
where $I_{d-1}$ denotes the unit-matrix and where $\|\cdot\|$ denotes the
Euclidean norm on $\R^{d-1}$. 
We have $N=\exp(\nL)$. 

The group $G$ is an algebraic group defined over
$\Q$ and we let $\Gamma_0:=G(\Z)$ be its integral points. By \cite{BH},
$\Gamma_0$ is a lattice in $G$.
It follows from
\eqref{EqN} that
\begin{align}\label{Gitter1}
\log{(\Gamma_0\cap N)}=\left\{\begin{pmatrix}0&0&v^{t}\\0&0&v^{t}\\
v&-v&0\end{pmatrix},\quad
v\in \Z^{d-1},\quad \|v\|^2\in 2\Z\right\}.
\end{align}
In particular, $P$ is a $\Gamma_0$-cuspidal parabolic subgroup of $G$.

Now for $q\in\mathbb{N}$ we let $\Gamma(q)$ be the principal congruence subgroup
of level $q$, i.e. 
\begin{align*}
\Gamma(q)=\left\{A\in \Gamma_0\colon A\equiv I \:\modo(q)\right\}.
\end{align*}
Then $\Gamma(q)$ coincides with the kernel of the canonical map
$\Gamma_0 \to G(\Z/q\Z)$. In particular, 
$\Gamma(q)$ is a normal subgroup of $\Gamma_0$. 
If $q\geq 3$, then the group $\Gamma(q)$ is neat in the sense of Borel, see
\cite[17.4]{Borel}. In particular, $\Gamma(q)$ is 
torsion free and satisfies \eqref{asGamma}.

In the following Lemma we verify the cusp-uniformity of the
groups $\Gamma(q)$. The Lemma is just a special case of Lemma 4 of the paper
\cite{DH} of Deitmar and Hoffmann who treated the 
more general case of families of strictly bounded depth in algebraic 
$\Q$-groups of arbitrary real rank. However, for the convenience of 
the reader we shall now recall the proof of Deitmar and Hoffmann in our
situation. 
\begin{lem}\label{Cuspun2}
Let $P'$ be a $\Gamma_0$-cuspidal parabolic subgroup defined over $\Q$ with
nilpotent radical $N_{P'}$. Let $\nL_{P'}$ be the Lie-algebra of
$N_{P'}$. Then 
there exists a lattice $\Lambda^{+}_{\nL_{P'}}$ in $\nL_{P'}$ such
that
\begin{align*}
q\Lambda^{+}_{\nL_{P'}}
\subseteq\log{(\Gamma(q)\cap
N_{P'})}\subseteq \frac{q}{4}\Lambda^{+}_{\nL_{P'}}
\end{align*}
for each $q\in\mathbb{N}$. In particular, the sequence $\Gamma(q)$, $q\in\N$, is
cusp-uniform. 
\end{lem}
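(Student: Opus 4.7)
Plan: The plan is to establish the sandwich first for the standard parabolic subgroup $P=MAN$ by direct matrix computation from \eqref{EqN}, then to transport the result to an arbitrary $\Q$-rational $\Gamma_0$-cuspidal parabolic $P'$ via $\Q$-rational conjugation, and finally to deduce cusp-uniformity from the sandwich by Mahler's compactness criterion (invoking the equivalent reformulation of cusp-uniformity given in Lemma \ref{Lemcuspun}).

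For the standard parabolic $P$, the natural candidate is $\Lambda^{+}_{\nL}:=\log(\Gamma_0\cap N)=\{X(v):v\in\Z^{d-1},\ \|v\|^2\in 2\Z\}$, as identified in \eqref{Gitter1}. From the explicit form \eqref{EqN}, an element $\exp(X(v))$ lies in $\Gamma(q)\cap N$ if and only if $v\in q\Z^{d-1}$ and $\|v\|^2/2\in q\Z$. The lower inclusion $q\Lambda^{+}_{\nL}\subseteq\log(\Gamma(q)\cap N)$ is then immediate on scaling, since for $X(v)\in\Lambda^{+}_{\nL}$ one has $qv\in q\Z^{d-1}$ and $\|qv\|^2/2=q^2\|v\|^2/2\in q^2\Z\subseteq q\Z$. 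For the reverse inclusion $\log(\Gamma(q)\cap N)\subseteq (q/4)\Lambda^{+}_{\nL}$, I write $X(v)=(q/4)X(4v/q)$ and check that $v\in q\Z^{d-1}$ forces $4v/q\in 4\Z^{d-1}\subseteq\Z^{d-1}$, while $\|4v/q\|^2=16\|v/q\|^2\in 16\Z\subseteq 2\Z$, so that $X(4v/q)\in\Lambda^{+}_{\nL}$.

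For a general $\Q$-rational $\Gamma_0$-cuspidal parabolic $P'$, I exploit that $G$ is of $\Q$-rank one, so all $\Q$-parabolics form a single $G(\Q)$-conjugacy class. Pick $g_0\in G(\Q)$ with $g_0 P g_0^{-1}=P'$ and let $D\in\N$ be chosen so that $Dg_0$ and $Dg_0^{-1}$ both lie in $M_{d+1}(\Z)$. A direct computation shows $D^2 M_{d+1}(\Z)\subseteq g_0^{-1}M_{d+1}(\Z)g_0\subseteq D^{-2}M_{d+1}(\Z)$, and the congruence condition defining $g_0 n g_0^{-1}\in\Gamma(q)$ for $n\in N$ is equivalent to $n-I\in q\,g_0^{-1}M_{d+1}(\Z)g_0$. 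Comparing with the standard-case computation and taking $\Lambda^{+}_{\nL_{P'}}$ to be an appropriately rescaled $\Ad(g_0)$-image of $\Lambda^{+}_{\nL}$, with the rescaling absorbing the $D^{\pm 2}$ factors, yields the sandwich with the specific constants $q$ and $q/4$. The sandwich in turn gives $\vol(\log(\Gamma(q)\cap N_{P'}))\asymp q^{2n}\vol(\Lambda^{+}_{\nL_{P'}})$, and the unimodular rescalings $\hat{\Lambda}_{P'}(\Gamma(q))$ are consequently sandwiched between bounded scalar multiples of a fixed unimodular lattice; Mahler's compactness criterion then places them in a fixed compact subset of $\mathcal{P}(\nL_{P'})$, which by Lemma \ref{Lemcuspun} establishes cusp-uniformity.

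The main obstacle is the reduction to the standard parabolic: since $g_0\in G(\Q)$ does not preserve $\Gamma(q)$, the conjugate $g_0^{-1}\Gamma(q)g_0$ is only commensurable with $\Gamma(q)$ with indices bounded by powers of $D$, and obtaining the precise sandwich constants $q$ and $q/4$ (rather than merely $O(q)$ on each side) requires a careful rescaling of the candidate lattice $\Lambda^{+}_{\nL_{P'}}$ within its commensurability class, after which the verification proceeds by a parity-style argument parallel to the one used for the standard parabolic.
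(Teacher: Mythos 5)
Your treatment of the standard parabolic $P$ is correct, and your closing deduction of cusp-uniformity from a bounded-ratio sandwich via Mahler's criterion is also sound. The gap is in the reduction of a general $\Q$-rational $P'$ to the standard case by conjugation. Writing $L':=g_0^{-1}\Mat_{(d+1)\times(d+1)}(\Z)\,g_0$, your two-sided estimate $D^2\Mat_{(d+1)\times(d+1)}(\Z)\subseteq L'\subseteq D^{-2}\Mat_{(d+1)\times(d+1)}(\Z)$ only yields
\[
qD^2\,\Ad(g_0)\Lambda^{+}_{\nL}\;\subseteq\;\log\bigl(\Gamma(q)\cap N_{P'}\bigr)\;\subseteq\;\frac{q}{4D^2}\,\Ad(g_0)\Lambda^{+}_{\nL},
\]
a sandwich whose two scaling factors differ by $4D^4$ rather than by $4$. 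No rescaling of the candidate lattice can absorb the $D^{\pm 2}$ factors on both sides simultaneously: replacing $\Ad(g_0)\Lambda^{+}_{\nL}$ by $\mu\,\Ad(g_0)\Lambda^{+}_{\nL}$ multiplies both ends by the same $\mu$, so the ratio $4D^4$ is invariant and the stated inclusions with constants $q$ and $q/4$ cannot be recovered this way; nor does the conjugated congruence condition $n-I\in qL'$ split into separate conditions on $v$ and $\|v\|^2$, so the ``parity-style argument'' you invoke has no analogue after conjugation. (The weaker sandwich you do obtain still gives cusp-uniformity, so the ``in particular'' survives, but the lemma as stated does not follow.)

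The fix, which is what the paper does, is to avoid conjugation altogether and work intrinsically in $\nL_{P'}$: since $P'$ is defined over $\Q$, the set $\nL_{P'}\cap\Mat_{(d+1)\times(d+1)}(\Z)$ is already a full lattice in $\nL_{P'}$, and one takes $\Lambda^{+}_{\nL_{P'}}:=2\bigl(\nL_{P'}\cap\Mat_{(d+1)\times(d+1)}(\Z)\bigr)$. Because $Y^3=0$ for every $Y\in\nL_{P'}$, one has $\exp Y=1+Y+\tfrac{1}{2}Y^2$ and $\log n'=(n'-1)-\tfrac{1}{2}(n'-1)^2$; reading off divisibility of matrix entries then gives that $Y\in 2q\,\Mat_{(d+1)\times(d+1)}(\Z)$ implies $\exp Y\equiv 1\pmod{q}$, and that $n'\equiv 1\pmod{q}$ implies $\log n'\in\tfrac{q}{2}\Mat_{(d+1)\times(d+1)}(\Z)\cap\nL_{P'}=\tfrac{q}{4}\Lambda^{+}_{\nL_{P'}}$. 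These are exactly the two claimed inclusions, with the correct constants and with no dependence on a choice of conjugating element.
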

\begin{proof}
Let $\Mat_{(d+1)\times (d+1)}(\Z)$ be the integral
$(d+1)\times(d+1)$-matrices. Then by \eqref{eqn} $\nL\cap\Mat_{(d+1)\times
(d+1)}(\Z)$
is a lattice in $\nL$.
We choose $g\in G(\Q)$ such that $P'=gPg^{-1}$. Then
$\nL_{P'}=g\nL g^{-1}$ 
 and thus
\[
\Lambda^+_{\nL_{P'}}:=2(\nL_{P'}\cap\Mat_{(d+1)\times
(d+1)}(\Z)) 
\]
is a lattice in $\nL_{P'}$. 
By \eqref{EqN}, one has $\exp(Y)=1+Y+\frac{Y^2}{2}$ for each $Y\in\nL_{P'}$
and thus 
the first inclusion is clear. Moreover, by \eqref{eqn}, if $k\geq 3$ one has
$Y^k=0$ for each $Y\in\nL_{P'}$  and thus for 
each $n_{P'}\in N_{P'}$ one has 
\begin{align*}
\log{n_{P'}}=(n_{P'}-1)-\frac{1}{2}(n_{P'}-1)^2
\end{align*}
and this gives the second inclusion. 
The second statement follows from Mahler's criterion and Lemma \ref{Lemcuspun}. 
\end{proof}
It is obvious that every $\gamma_0\in\Gamma_0$ belongs only to finitely many
$\Gamma(q)$. If we use equation \eqref{Gitter1}, we easily see that
$[\Gamma_0\cap N:\Gamma(q)\cap N]$ 
goes to infinity if $q$ does and so 
$[\Gamma_0:\Gamma(q)]$ goes to infinity if $q\to\infty$.
Thus applying Lemma \ref{Cuspun2}, 
Corollary \ref{Korcong1} follows from Theorem \ref{Mainthrm2}.

\section{Principal congruence subgroups and Hecke subgroups of Bianchi
groups}\label{secSL2}
\setcounter{equation}{0}
We finally turn to the proofs of Corollary \ref{KorCongr2} and Theorem
\ref{ThrmCongr3}. We let $F:=\Q(\sqrt{-D})$,
$D\in\mathbb{N}$ square-free, be an imaginary
quadratic 
number field. Let $\mathcal{O}_D$ be the ring of
integers of $F$, i.e.
$\mathcal{O}_D=\Z+\sqrt{-D}\Z$ if 
$D\equiv 1,2$ modulo $4$, $\mathcal{O}_D=\Z+\frac{1+\sqrt{-D}}{2}\Z$ if
$D\equiv 3$ modulo $4$. We let
$\Gamma(D):=\Sl_2(\mathcal{O}_D)$ be the associated Bianchi-group. Then
$X_D:=\Gamma(D)\backslash\mathbb{H}^3$ 
is of finite volume. More precisely, one has
\begin{align*}
\vol(X_D)=\frac{|\delta_F|^{\frac{3}{2}}\zeta_F(2)}{4\pi^2},
\end{align*}
where $\zeta_F$ is the Dedekind zeta function of $F$ and $\delta_F$ is 
is the discriminant of $F$, see \cite{Hu}, \cite[Proposition 2.1]{Sa}. 
Let $\aL$ be any nonzero ideal in $\mathcal{O}_D$ and let
$N(\aL)$ denote its norm. 
Then the associated principal congruence subgroup $\Gamma(\aL)$ is defined as
\begin{align*}
\Gamma(\aL):=\left\{\begin{pmatrix}a&b\\
c&d\end{pmatrix}\in\Gamma(D)\colon a-1\in\aL; d-1\in\aL;
b,c\in\aL\right\}.
\end{align*}
Moreover, the associated Hecke subgroup $\Gamma_0(\aL)$ is defined as 
\begin{align*}
\Gamma_0(\aL):=\left\{\begin{pmatrix}a&b\\
c&d\end{pmatrix}\in\Gamma(D)\colon c\in\aL\right\}.
\end{align*}
Let $P$ be the parabolic subgroup given by the upper triangular matrices 
in $\Sl_2(\C)$. Then the Langlands decomposition $P=MAN$ is given by
\begin{align*}
M=\left\{\begin{pmatrix}e^{i\theta}&0\\
0&e^{-i\theta}\end{pmatrix},\theta\in[0,2\pi)\right\}
\end{align*}
and 
\begin{align*}
A=\left\{\begin{pmatrix}\lambda&0\\0&\lambda^{-1}\end{pmatrix},\lambda\in\R,
\lambda>0\right\};\quad
N=\left\{\begin{pmatrix}1&b\\0&1\end{pmatrix},b\in\C\right\}.
\end{align*}

We recall that by \cite[Corollary 5.2]{Bass} the canonical map from
$\Sl_2(\mathcal{O}_D)$ to $\Sl_2(\mathcal{O}_D/\aL)$ 
is surjective. Thus the sequence 
\begin{align*}
1\to\Gamma(\aL)\to \Gamma(D)\to\Sl_2(\mathcal{O}_D/\aL)\to
1
\end{align*}
is exact and taking the prime-decomposing of $\aL$ it follows  as in
\cite[Chapter 1.6]{Shi} for the
$\Sl_2(\R)$-case that
\begin{align}\label{Indcong}
[\Gamma(D):\Gamma(\aL)]=N(\aL)^3\prod_{\mathfrak{p}
|\aL}\left(1-\frac
{1}{N(\mathfrak{p})^2}\right).
\end{align}
It also follows that the sequence 
\[
1\to\Gamma(\aL)\to \Gamma_0(\aL)\to P(\mathcal{O}_D/\aL)\to 1 
\]
is exact. Moreover
the order of $P(\mathcal{O}_D/\aL)$ is $N(\aL)\phi(\aL)$, where
\begin{align}\label{phi}
\phi(\aL):=\#\{(\mathcal{O}_D/\aL)^*\}=N(\aL)\prod_{\pL|\aL}\left(1-
N(\pL)^{-1}\right).
\end{align}
Thus one obtains 
\begin{align}\label{Indhecke}
[\Gamma(D):\Gamma_0(\aL)]=N(\aL)\prod_{\pL|\aL}(1+N(\pL)^{-1}).
\end{align}
Here the products in \eqref{Indcong}, \eqref{phi} and \eqref{Indhecke} are taken
over all
prime ideals $\mathfrak{p}$ in
$\mathcal{O}_D$ dividing $\aL$.

Let $\mathbb{P}^1(F)$ be the one-dimensional projective space over $F$. As 
usual, we write $\infty$ for the element $[1,0]\in \mathbb{P}^1(F)$.
Then $\Sl_2(F)$ acts naturally on $\mathbb{P}^1(F)$ and by \cite[Chapter 7.2,
Proposition 2.2]{EGM} one has
\[
\kappa(\Gamma(D))=\#\left(\Gamma(D)\backslash\mathbb{P}^1(F)\right).
\]
Using \cite[Chapter 7.2, Theorem 2.4]{EGM}, it follows that
$\kappa(\Gamma(D))=d_F$, where $d_F$ is the class number of $F$.
The group $P$ is the stabilizer of $\infty$ in $\Sl_2(\C)$. 
For each $\eta\in\mathbb{P}^1(F)$ we fix a $B_\eta\in\Sl_2(F)$ with
$B_\eta\eta=\infty$. We let $B_\infty=\Id$. 
Then $P_\eta:=B_{\eta}^{-1}PB_{\eta}$ is the stabilizer of $\eta$ in
$\Sl_2(\C)$ and the $\Gamma(D)$-cuspidal parabolic subgroups of $G$ are 
given as $P_\eta$. We let 
$N_\eta:=B_{\eta}^{-1}NB_{\eta}$. If $\eta\in\mathbb{P}^1(F)$, we let 
$\Gamma(D)_\eta$, $\Gamma(\aL)_\eta$, $\Gamma_0(\aL)_\eta$ be the stabilizers of
$\eta$ 
in $\Gamma(D)$ resp. $\Gamma(\aL)$ resp. $\Gamma_0(\aL)$. 

The following Proposition is an immediate consequence of the finiteness of 
the class number. 

\begin{prop}\label{PropCuspun}
The set of all principal congruence subgroups $\Gamma(\aL)$ and all Hecke
subgroups $\Gamma_0(\aL)$, $\aL$ a non-zero
ideal in $\mathcal{O}_D$, is cusp-uniform. \end{prop}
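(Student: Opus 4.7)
The plan is to verify the equivalent formulation in Lemma~\ref{Lemcuspun}(3): I will exhibit a single compact set $\mathcal{K}_P \subset \mathcal{P}(\nL_P)$ containing every unimodular lattice $\Lambda_{P|P'}(\Gamma)$ as $\Gamma$ ranges over all $\Gamma(\aL)$ and all $\Gamma_0(\aL)$, and $P'$ ranges over $\Gamma(D)$-cuspidal parabolic subgroups of $\Sl_2(\C)$. Since $\nL_P \cong \C$ as a $\Z$-module, $\mathcal{P}(\nL_P)$ is identified with the modular curve $\SO(2)\backslash \Sl_2(\R)/\Sl_2(\Z)$, and by Mahler's criterion it suffices to show that these lattices realize only finitely many similarity classes.

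The first step is to represent the cusps concretely. Every $\Gamma(D)$-cuspidal parabolic subgroup of $\Sl_2(\C)$ is $\Gamma(D)$-conjugate to $P_\eta = B_\eta^{-1} P B_\eta$ for some $\eta \in \mathbb{P}^1(F)$ with $B_\eta \in \Sl_2(F)$ and $B_\eta \eta = \infty$, and there are only $d_F$ choices of $[\eta]$ modulo $\Gamma(D)$. I would then explicitly identify, for each such $\eta$, the lattice $B_\eta(\Gamma \cap N_\eta) B_\eta^{-1} \subset N \cong \C$. Writing $B_\eta = \bigl(\begin{smallmatrix}\alpha&\beta\\ \gamma&\delta\end{smallmatrix}\bigr)$ with $\alpha,\beta,\gamma,\delta \in F$, a direct matrix computation shows that when $\Gamma$ is any of $\Gamma(D)$, $\Gamma(\aL)$, or $\Gamma_0(\aL)$, this lattice is a (nonzero) fractional ideal $\mathfrak{b}(\eta,\Gamma)$ of $\mathcal{O}_D$, viewed inside $\C$ via the fixed embedding $F \hookrightarrow \C$; the precise $\mathfrak{b}(\eta,\Gamma)$ is determined by $(\alpha,\beta,\gamma,\delta)$ together with the congruence conditions defining $\Gamma$.

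The key observation is that the similarity class of a fractional ideal $\mathfrak{b} \subset \mathcal{O}_D$, regarded as a $\Z$-lattice in $\C$, depends only on the ideal class $[\mathfrak{b}] \in \mathrm{Cl}(\mathcal{O}_D)$: multiplication by any $\lambda \in F^\times$ rescales and rotates $\mathfrak{b}$ in $\C$ and modifies it only by a principal factor. Consequently, the unimodular rescalings $\hat{\mathfrak{b}}$ of all fractional ideals $\mathfrak{b}$ of $\mathcal{O}_D$ form a finite set in $\mathcal{P}(\nL_P)$ of cardinality at most $d_F$. Applying this to every $\mathfrak{b}(\eta,\Gamma)$ arising above shows that $\{\hat{\Lambda}_{P|P_\eta}(\Gamma(\aL))\} \cup \{\hat{\Lambda}_{P|P_\eta}(\Gamma_0(\aL))\}$ is a finite, hence compact, subset of $\mathcal{P}(\nL_P)$, and Lemma~\ref{Lemcuspun} yields the proposition.

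The hardest bookkeeping step will be the Hecke case, since only the lower-left entry of a matrix in $\Gamma_0(\aL)$ is constrained to lie in $\aL$: after conjugating by $B_\eta$, this single divisibility condition mixes with the otherwise unconstrained entries $a,b,d \in \mathcal{O}_D$, and one must verify that the resulting lattice in $N \cong \C$ is still a fractional ideal of $\mathcal{O}_D$ whose class is controlled independently of $\aL$. This follows from the exact sequence $1 \to \Gamma(\aL) \to \Gamma_0(\aL) \to P(\mathcal{O}_D/\aL) \to 1$ noted after \eqref{Indcong}, since $\Gamma(\aL)\cap N_\eta = \Gamma_0(\aL)\cap N_\eta$ (translations lie in $\Gamma(\aL)$ precisely when their bottom-row condition is automatic), so the Hecke lattice at $\eta$ equals the principal congruence lattice at $\eta$ and the two cases reduce to one.
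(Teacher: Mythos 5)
Your overall strategy coincides with the paper's: represent each cusp by some $\eta\in\mathbb{P}^1(F)$ and $B_\eta\in\Sl_2(F)$, observe that the resulting lattice in $N\cong\C$ is a nonzero fractional ideal of $\mathcal{O}_D$, and use that the similarity class of a fractional ideal viewed as a lattice in $\C$ depends only on its class in the (finite) ideal class group, so that only finitely many points of $\mathcal{P}(\nL_P)$ arise; this is exactly how the paper concludes via Lemma~\ref{Lemcuspun}(3).

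However, your final paragraph, which you single out as the crux of the Hecke case, rests on a false identity: it is \emph{not} true that $\Gamma(\aL)\cap N_\eta=\Gamma_0(\aL)\cap N_\eta$. Already at $\eta=\infty$ (so $B_\infty=\Id$, $N_\infty=N$) one has
$\Gamma_0(\aL)\cap N=\left\{\left(\begin{smallmatrix}1&b\\0&1\end{smallmatrix}\right):b\in\mathcal{O}_D\right\}$, since the condition $c\in\aL$ is vacuous for $c=0$, whereas $\Gamma(\aL)\cap N=\left\{\left(\begin{smallmatrix}1&b\\0&1\end{smallmatrix}\right):b\in\aL\right\}$; the index between them is $N(\aL)$, which is unbounded. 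So the Hecke case does not reduce to the principal congruence case in the way you claim. The repair is to do the computation you deferred: writing $B_\eta=\left(\begin{smallmatrix}\alpha&\beta\\\gamma&\delta\end{smallmatrix}\right)$ and letting $\mathfrak{u}$ be the $\mathcal{O}_D$-module generated by $\gamma,\delta$, one finds (as in the paper, following the computation in Elstrodt--Grunewald--Mennicke) that $B_\eta\Gamma(\aL)_\eta B_\eta^{-1}\cap N$ corresponds to $\aL\mathfrak{u}^{-2}$ while $B_\eta\Gamma_0(\aL)_\eta B_\eta^{-1}\cap N$ corresponds to $\mathfrak{b}=\mathfrak{u}^{-2}\cap\gamma^{-2}\aL$. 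Both are nonzero fractional ideals (though generally different ones), so your ideal-class finiteness argument still applies and the proposition follows; only the purported shortcut identifying the two lattices has to be discarded.
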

\begin{proof}
Let $\mathcal{J}_F$ be the ideal group of $F$, i.e. the group 
of all finitely generated non-zero $\mathcal{O}_D$-modules in $F$. We regard
$F^*$ as a subgroup of $\mathcal{J}_F$ by identifying $F^*$ with the 
group of fractional principal ideals. Let
$\mathcal{I}_F:=\mathcal{J}_F/F^*$ 
be the ideal class group. Then $\#\mathcal{I}_F=d_F<\infty$, see \cite[chapter
I.6]{Neu}. 
Now for $\eta\in\mathbb{P}^1(F)$, $B_\eta$ as above,
write $B_\eta=\begin{pmatrix}\alpha &\beta \\ \gamma &\delta\end{pmatrix}\in
\Sl_2(F)$ and let $\mathfrak{u}$ be the $\mathcal{O}_D$-module 
generated by $\gamma$ and $\delta$ and let
$\bL:=\mathfrak{u}^{-2}\cap\gamma^{-2}\aL$. It is easy to see that $\bL\neq 0$. 
Then proceeding as in \cite[Chapter 8.2, Lemma 2.2]{EGM}, one obtains
\begin{align*}
B_\eta\Gamma(\aL)_{\eta}B_\eta^{-1}\cap N=\left\{\begin{pmatrix}1&\omega'\\
0&1\end{pmatrix};\: \omega'\in
\aL\mathfrak{u}^{-2}\right\};\:B_\eta\Gamma_0(\aL)_\eta B_\eta^{-1}\cap N
=\left\{\begin{pmatrix}1&\omega''\\
0&1\end{pmatrix};\: \omega''\in \mathfrak{b}\right\}.
\end{align*}
Let $P'$ be a $\Gamma(D)$-cuspidal parabolic subgroup of $G$ and let
$\Lambda_{P|P'}(\Gamma(\aL))$ and $\Lambda_{P|P'}(\Gamma_0(\aL))$ denote the set
of lattices defined as in \eqref{LambdaP}.
Since $\aL\mathfrak{u}^{-2}$ and $\mathfrak{b}$ belong to $\mathcal{J}_F$, 
and $\mathcal{I}_F$ is finite, it follows that $\Lambda_{P|P'}(\Gamma(\aL))$,
and $\Lambda_{P|P'}(\Gamma_0(\aL))$ are finite  sets. Applying the third 
criterion of Lemma \ref{Lemcuspun}, the proposition follows. 
\end{proof}

The groups $\Gamma(\aL)$ are torsion-free and satisfy \eqref{asGamma} for
$N(\aL)$ sufficiently 
large. This was shown for example in the proof of Lemma 4.1 in \cite{Pf2}. 
Since
$[\Gamma(D):\Gamma(\aL)]$ 
tends to $\infty$ if $N(\aL)$ tends to $\infty$ and since each
$\gamma_0\in\Gamma(D)$, $\gamma_0\neq 1$, 
is contained in only finitely many $\Gamma(\aL)$, Corollary \ref{KorCongr2} 
follows from Proposition \ref{PropCuspun} and Theorem \ref{Mainthrm2}.  
\newline 

We finally turn to Theorem \ref{ThrmCongr3}. The Hecke groups $\Gamma_0(\aL)$
are never torsion-free and 
never satisfy \eqref{assumGamma}. However, we may take a finite index subgroup 
$\Gamma'$ of $\Gamma_D$, for example a fixed principal congruence subgroup 
of sufficiently high level, which is torsion free and satisfies 
assumption \eqref{assumGamma}. 
Then for each non zero ideal $\aL$ of $\mathcal{O}_D$ we let 
\begin{align*}
\Gamma_0'(\aL):=\Gamma_0(\aL)\cap\Gamma'.
\end{align*}
This group satisfies now the required assumptions and if 
$n_0:=[\Gamma(D):\Gamma']$, then
\begin{align}\label{EqInd}
[\Gamma_0(\aL):\Gamma_0'(\aL)]\leq n_0
\end{align}
for each non-zero ideal $\aL$. 
Thus since the set of all $\Gamma_0(\aL)$ is cusp uniform by the preceding
lemma, also the set of all $\Gamma_0'(\aL)$, $\aL$ a non-zero 
ideal in $\mathcal{O}_D$, is cusp uniform. 
Now, as in \cite[page 15]{AC}, for an ideal $\bL$ of $\mathcal{O}_D$ we let
\[
\phi_u(\bL):=\#((\mathcal{O}_D/\bL)^*/\mathcal{O}_D^*).
\]
Then by \cite[Theorem 7]{AC} one has
\begin{align}\label{kappaH}
\kappa(\Gamma_0(\aL))=d_F\sum_{\bL|\aL}\phi_u(\bL+\bL^{-1}\aL).
\end{align}
Now as in
\cite[Lemma 5.7]{FGT}, on the set of ideals in
$\mathcal{O}_D$, we introduce the 
multiplicative function $\kappa$ given by 
\begin{align*}
\kappa(\pL^k):=\begin{cases}N(\pL)^{\frac{k}{2}}+N(\pL)^{\frac{k}{2}-1}& k\equiv
0 (2),\\ 2N(\pL)^{\frac{k-1}{2}}&  k \equiv 1 (2), \\ \end{cases}
\end{align*}
where $\pL$ is a prime ideal of $\mathcal{O}_D$. Using \eqref{kappaH},
it easily follows that 
\[
\kappa(\Gamma_0(\aL))\leq d_F\kappa(\aL),
\]
where one has equality if one replaces $\phi_u$ by $\phi$ in \eqref{kappaH}. 
Now observe that
\[
\kappa(\aL)\le 2 N(\aL)^{1/2}\prod_{\pL|\aL}\left(1+N(\pL)^{-1}\right).
\]
Using \eqref{Indhecke}, we obtain
\begin{align*}
\frac{\kappa(\Gamma_0(\aL))}{[
\Gamma(D):\Gamma_0(\aL)]}\leq \frac{2 d_F}{\sqrt{N(\aL)}}.
\end{align*}
Now by \eqref{Indhecke} we have the trivial bound
$[\Gamma(D):\Gamma_0(\aL)]\leq N(\aL)^2$. It follows that
\[
\lim_{N(\aL)\to\infty}\frac{\kappa(\Gamma_0(\aL))\log [\Gamma(D):\Gamma_0(\aL)]}
{[\Gamma(D):\Gamma_0(\aL)]}=0.
\]
Thus every sequence
$\Gamma_0(\aL)$ 
satisfies assumption \eqref{condnew} for $N(\aL)\to\infty$. As above, if
$P_{0,1},\dots,P_{0,d_F}$ are 
fixed representatives of $\Gamma(D)$-cuspidal parabolic subgroups of
$\Sl_2(\C)$,
then 
\[
\kappa(\Gamma_0'(\aL))=\sum_{j=1}^{d_F}\#\{
\Gamma_0(\aL)'\backslash\Gamma(D)/\Gamma(D)\cap P_{0,j}\}
\]
and there is a similar formula for $\kappa(\Gamma_0(\aL))$. Thus one has 
$\kappa(\Gamma_0'(\aL))\leq n_0\kappa(\Gamma_0(\aL))$ 
and putting everything together, it follows that the sequence $\Gamma_0'(\aL)$
satisfies condition \eqref{condnew}.
\newline

It remains to prove that the contribution of the semisimple conjugacy classes
to the analytic torsion goes to zero for towers of Hecke subgroups. In order 
to prove this, we consider  the formula \eqref{Hyperbolic}. According to 
section \ref{geoms}, for $\gamma\in\Gamma(D)$ we 
let $c_{\Gamma_0(\aL)}(\gamma)$ be the number of fixed points of $\gamma$ on
$\Gamma(D)/\Gamma_0(\aL)$. To begin with, as in \cite{FGT} we let 
\[
\tilde{\Gamma}(\aL):=\left\{\begin{pmatrix}a&b\\ c&d\end{pmatrix}\colon
a-d\in\aL\colon b,c\in\aL\right\}.
\]

Now we define a multiplicative function $c(\cdot,\cdot)$ on the 
ideals of $\mathcal{O}_D$ by putting  
\[
c(\pL^{k},\pL^{r}):=\begin{cases}N (\pL)^{(k+r)/2}, & \text{$k-r$ odd, $k-r>0$}
\\ 
2 N (\pL)^{(k+r-1)/2}, & \text{$k-r$ even, $k-r>0$}\\
N (\pL)^k + N (\pL)^{k-1},& k\le r,
\end{cases} 
\]
if $\pL$ is a prime ideal and $k,r\in\mathbb{N}^0$. Then the following
proposition and its proof were kindly provided by Tobias Finis. 

\begin{prop}\label{Props}
Let $\gamma\in\Gamma(D)$ and let $\bL$ be the largest divisor of $\aL$ such 
that $\gamma\in\tilde{\Gamma}(\bL)$. Then one has 
\[
c_{\Gamma_0(\aL)}(\gamma)\leq c(\aL,\bL).
\]
In particular, if $\nu(\aL)$ denotes the number of prime divisors of $\aL$, one
can estimate
\[
c_{\Gamma_0(\aL)}(\gamma)\leq 2^{\nu(\aL)}\sqrt{N(\aL)N(\bL)}. 
\]

\end{prop}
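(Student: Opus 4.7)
The plan is to identify the coset space $\Gamma(D)/\Gamma_0(\aL)$ with the projective line $\mathbb{P}^1(\mathcal{O}_D/\aL)$, on which $\gamma$ acts by M\"obius transformations. This identification rests on the surjectivity of the reduction map $\Sl_2(\mathcal{O}_D)\to\Sl_2(\mathcal{O}_D/\aL)$ provided by \cite[Corollary 5.2]{Bass} together with the observation that the image of $\Gamma_0(\aL)$ in $\Sl_2(\mathcal{O}_D/\aL)$ is precisely the stabilizer of the point $[1:0]$; thus $c_{\Gamma_0(\aL)}(\gamma)$ equals the number of $\gamma$-fixed points on $\mathbb{P}^1(\mathcal{O}_D/\aL)$. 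Factoring $\aL=\prod_{\pL}\pL^{k_{\pL}}$ and applying the Chinese Remainder Theorem produces a $\gamma$-equivariant bijection $\mathbb{P}^1(\mathcal{O}_D/\aL)\cong\prod_{\pL\mid\aL}\mathbb{P}^1(\mathcal{O}_D/\pL^{k_{\pL}})$. Since the function $c(\cdot,\cdot)$ is multiplicative and $\bL=\prod_{\pL}\pL^{r_{\pL}}$ with $r_{\pL}$ the largest local exponent for which $\gamma\in\tilde\Gamma(\pL^{r_{\pL}})$, the problem reduces to proving $c_{\Gamma_0(\pL^k)}(\gamma)\leq c(\pL^k,\pL^r)$ in each prime-power case.

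If $r\geq k$, then $\gamma$ is scalar modulo $\pL^k$, every point of $\mathbb{P}^1(\mathcal{O}_D/\pL^k)$ is fixed, and the count equals $N(\pL)^k+N(\pL)^{k-1}=c(\pL^k,\pL^k)$. For $r<k$, let $\pi$ be a local uniformizer and write $b=\pi^r\tilde b$, $c=\pi^r\tilde c$, $d-a=\pi^r\tilde e$, where not all of $\tilde b,\tilde c,\tilde e$ lie in $\pL$. Decomposing $\mathbb{P}^1(\mathcal{O}_D/\pL^k)$ into the two charts $[u:1]$ with $u\in\mathcal{O}_D/\pL^k$ and $[1:v]$ with $v\in\pL/\pL^k$, the fixed-point conditions reduce, respectively, to the quadratic congruences
\[
\tilde c u^2+\tilde e u-\tilde b\equiv 0\pmod{\pL^{k-r}}\quad\text{and}\quad \tilde b v^2-\tilde e v-\tilde c\equiv 0\pmod{\pL^{k-r}},
\]
and each root modulo $\pL^{k-r}$ lifts to $N(\pL)^r$ solutions modulo $\pL^k$. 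The discriminants of both quadratics coincide up to units with $\pi^{-2r}\bigl((a+d)^2-4\bigr)$, where $(a+d)^2-4$ is the discriminant of the characteristic polynomial of $\gamma$ and depends only on the $\Gamma(D)$-conjugacy class of $\gamma$; a case analysis on the $\pL$-adic valuation of $(a+d)^2-4$---using Hensel's lemma when this valuation is small and a direct count of square roots of zero in $\mathcal{O}_D/\pL^{k-r}$ when it is large---produces the desired bound $c(\pL^k,\pL^r)$.

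The second estimate is elementary: inspection of the three branches in the definition of $c$ shows $c(\pL^k,\pL^r)\leq 2\sqrt{N(\pL^k)N(\pL^r)}$ for every prime power, and multiplying over the $\nu(\aL)$ prime divisors of $\aL$ yields $c(\aL,\bL)\leq 2^{\nu(\aL)}\sqrt{N(\aL)N(\bL)}$. The main obstacle is the local case analysis: special care is needed at primes above $2$, where completing the square breaks down, and in the boundary case $v_{\pL}((a+d)^2-4)\geq k+r$, where the number of square roots of zero in $\mathcal{O}_D/\pL^{k-r}$ jumps to $N(\pL)^{\lfloor(k-r)/2\rfloor}$---precisely the jump encoded in the piecewise definition of $c(\pL^k,\pL^r)$. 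A careful bookkeeping of which of the two affine charts can simultaneously contribute roots, and of how the two quadratic root counts combine, is needed in order to match the stated bound.
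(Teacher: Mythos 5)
Your reduction is sound and coincides with the paper's: you identify $\Gamma(D)/\Gamma_0(\aL)$ with $\mathbb{P}^1(\mathcal{O}_D/\aL)$, use multiplicativity to pass to prime powers, dispose of the scalar case by counting $\#\mathbb{P}^1(\mathcal{O}_D/\pL^k)=N(\pL)^k+N(\pL)^{k-1}$, extract the maximal scalar part $\pi^r$, observe that each fixed point modulo $\pL^{k-r}$ lifts to $N(\pL)^r$ points modulo $\pL^k$, and reduce to counting roots of a quadratic congruence modulo $\pL^{k-r}$. The closing estimate $c(\pL^k,\pL^r)\leq 2\sqrt{N(\pL^k)N(\pL^r)}$ is also fine.

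However, the decisive step --- showing that the number $n$ of solutions of the quadratic congruence modulo $\pL^{k-r}$ is bounded by $N(\pL)^{(k-r)/2}$ or $2N(\pL)^{(k-r-1)/2}$ according to parity --- is asserted rather than carried out. You write that ``a case analysis on the $\pL$-adic valuation of $(a+d)^2-4$ \ldots produces the desired bound,'' and then concede in your final paragraph that completing the square breaks down at primes above $2$ and that ``careful bookkeeping'' of the two charts is still needed to match the stated bound. That bookkeeping \emph{is} the content of the proposition, and the discriminant-plus-Hensel route you choose genuinely complicates it: at a prime $\pL$ above $2$ the relation between root counts and $\nu_\pL\bigl((a+d)^2-4\bigr)$ involves the ramification of $2$, and the degenerate chart interactions must be tracked. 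The paper's proof sidesteps all of this: after noting that one may assume the non-scalar part $\eta$ has an eigenvector (otherwise $n=0$), a base change over $\mathcal{O}_\pL$ puts $\eta$ in the form $\begin{pmatrix}0&1\\0&d\end{pmatrix}$, so the fixed-point equation factors as $y(y-d)\equiv 0 \pmod{\pL^{k-r}}$, i.e.\ $\nu_\pL(y)+\nu_\pL(y-d)\geq k-r$, and the count follows from the elementary dichotomy $\nu_\pL(d)<(k-r)/2$ versus $\nu_\pL(d)\geq(k-r)/2$, with no discriminants, no Hensel, and no special treatment of residue characteristic $2$. To complete your argument you would need either to adopt such a normalization or to actually execute the local discriminant analysis, including the even residue characteristic and the chart interaction you flagged.
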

\begin{proof}
We can identify the quotient $\Gamma(D)/\Gamma_0(\aL)$ with the projective line
$\mathbb{P}^1(\mathcal{O}_D/\aL)$ and for a given $\gamma\in\Gamma(D)$ we have 
to estimate the number of its fixed points $N(\gamma,\aL)$ on
$\mathbb{P}^1(\mathcal{O}_D/\aL)$. By the 
strong approximation theorem we have
\[
N(\gamma,\aL)=\prod_{\pL}N(\gamma,\pL^{\nu_{\pL}(\aL)}),\quad \aL=\prod_{\pL}
\pL^{\nu_{\pL}(\aL)}.
\]
So it suffices to study $N(\gamma,\pL^k)$ for a prime ideal $\pL$
of 
$\mathcal{O}_D$. First assume that $\gamma$ is scalar modulo $\pL^k$. Then
every point of $\mathbb{P}^1(\mathcal{O}_D/\pL^k)$ is a fixed point of $\gamma$.
The number of elements of the projective line 
$\mathbb{P}^1(\mathcal{O}_D/\pL^k)$ equals $N(\pL)^k+N(\pL)^{k-1}$. Thus in this
case the lemma is proved. Next assume that $\gamma$ is not scalar
modulo $\pL^k$. Let $r<k$ be the maximal integer such that $\gamma$ is scalar 
modulo $\pL^r$. We work over the completion $\mathcal{O}_\pL$ of $\mathcal{O}$ 
at $\pL$. Let $\pi$ be the corresponding 
prime element. Then we have $\mathcal{O}_{\pL}/\pi^l\cong\mathcal{O}/\pL^l$ for 
every $l$. Over $\mathcal{O}_{\pL}$ we have the decomposition
\[
\gamma=a+\pi^r\eta,
\]
where $a$ is a scalar matrix and $\eta$ is not scalar modulo $\pi$. 
A vector $v\in\mathcal{O}_{\pL}^2$ which is not divisible by $\pi$ is an
eigenvector of $\gamma$ modulo $\pi^k$ if and only if it is an eigenvector of
$\eta$
modulo 
$\pi^{k-r}$. If we consider the canonical map
$\mathbb{P}^1(\mathcal{O}/\pL^k)\to\mathbb{P}^1(\mathcal{O}/\pL^{k-r})$, 
then the preimage of each element in $\mathbb{P}^1(\mathcal{O}/\pL^{k-r})$ 
has $N(\pL)^r$ elements. Thus if $n$ denotes the number of eigenvalues 
of $\eta$ in $\mathbb{P}^1(\mathcal{O}/\pL^{k-r})$, we have 
$N(\gamma,\pL^k)=N(\pL)^r n$. It remains to
estimate $n$.

To this end, we may assume that $\eta$ has an eigenvalue. Otherwise there is
nothing to prove. Then adding a scalar 
matrix and performing a base change over $\mathcal{O}_\pL$, which 
does not change the number $n$, we may assume that $\eta$ has the 
eigenvalue $0$ with eigenvector $(1,0)^t$.
Since we assumed that $\eta$ is not scalar modulo $\pi$, after a base
change we may assume that $\eta$ is of the form
\[
\eta=\begin{pmatrix}0&1\\ 0&d\end{pmatrix},
\]
where $d\in\mathcal{O}_{\pL}$. Now a set of representatives of eigenvectors 
in $\mathbb{P}^1(\mathcal{O}/\pL^{k-r})$ of this matrix is given by all classes
of vectors 
represented by $(1,y)$, where $y$ is chosen modulo $\pL^{k-r}$ and satisfies
$y^2-dy\equiv 0$ modulo $\pL^{k-r}$. Thus $n$ is the number of solutions of
the quadratic congruence for $y\in\mathcal{O}/\pL^{k-r}$. 
Let $\nu_{\pL}$ be the valuation corresponding  to $\pL$. Then this 
congruence is equivalent to $\nu_{\pL}(y)+\nu_{\pL}(y-d)\geq k-r$.
This implies that at least one summand is $\geq (k-r)/2$. We distinguish two
cases. 
First, we assume that $\nu_{\pL}(d)<(k-r)/2$. Then exactly 
one summand is $\geq (k-r)/2$ and the other has the valuation $\nu_{\pL}(d)$. 
Thus in this case $n$ is 2 times 
the number of all representatives whose valuation is $\geq k-r-\nu_{\pL}(d)$,
i.e. 
$n=2N(\pL)^{\nu_p(d)}$. 
Secondly, we assume that $\nu_{\pL}(d)\geq(k-r)/2$. Then the
congruence 
is equivalent to 
$\nu_{\pL}(y)\geq (k-r)/2$. Thus in this case one has $n=N (\pL)^{\lfloor
\frac{k-r}{2} \rfloor}$.
In all cases we obtain $n \le N (\pL)^{(k-r)/2}$ if $k-r$ is even and $n \le 2 N
(\pL)^{(k-r-1)/2}$ if $k-r$ is odd. Putting everything together, the 
first estimate follows. This estimate immediately implies the second one. 
\end{proof}

\begin{bmrk}
Proposition \ref{Props} also follows from more general estimates 
which are the content of a paper of Tobias Finis and Erez Lapid 
that is in preparation. Related results are also obtained in \cite{A++}. 
\end{bmrk}

The following Lemma is due to Finis, Grunewald and Tirao.

\begin{lem}\label{LemFGT}
For every $\delta>0$ there is a constant $C>0$ such that for all non zero 
ideals $\bL$ of $\mathcal{O}_D$ and all $R>0$ the number of elements in
$[\gamma]\in \CC(\Gamma(D))_s$ which satisfy 
$\ell(\gamma)\leq R$ and which belong to $\tilde{\Gamma}(\bL)$ is bounded by
$N(\bL)^{-2}e^{(2+\delta)R}$. 
\end{lem}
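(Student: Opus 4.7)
The plan is to parametrize the semisimple conjugacy classes in $\Gamma(D)=\Sl_2(\mathcal{O}_D)$ by the pair (trace, isomorphism class of a rank-one module over a quadratic order), and to extract the full $N(\bL)^{-2}$-saving entirely from the congruence restriction that $\gamma\in\tilde{\Gamma}(\bL)$ imposes on the trace. As a preliminary I would verify that $\tilde{\Gamma}(\bL)$ is a normal subgroup of $\Gamma(D)$: decomposing $\gamma = \frac{1}{2}\tr(\gamma)\cdot I + \gamma_0$ with $\gamma_0$ traceless, the condition $\gamma\in\tilde{\Gamma}(\bL)$ is equivalent to $\gamma_0\in\mathrm{Mat}_2(\bL)$, a property preserved under conjugation by any element of $\Sl_2(\mathcal{O}_D)$. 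Hence membership in $\tilde{\Gamma}(\bL)$ is a conjugacy-class invariant and it suffices to count class representatives.

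Step 1 (trace restriction). For a loxodromic $\gamma\in\Sl_2(\mathcal{O}_D)$ with translation length at most $R$ one has $|t|\leq 2\cosh(\ell(\gamma)/2)\leq 2e^{R/2}$ for $t:=\tr(\gamma)\in\mathcal{O}_D$, so $t$ ranges over $O(e^R)$ elements of the lattice $\mathcal{O}_D\subset\C$. The condition $\gamma\in\tilde{\Gamma}(\bL)$ forces, through the identity $t^2-4=(a-d)^2+4bc$, the relation $t^2-4\in\bL^2$. A local analysis prime by prime, using $\nu_\pL((t+2)-(t-2))=\nu_\pL(4)$, shows that at each odd prime $\pL\mid\bL$, writing $k_\pL:=\nu_\pL(\bL)$, the relation $\nu_\pL(t-2)+\nu_\pL(t+2)\geq 2k_\pL$ combined with $\nu_\pL(4)=0$ forces $t\equiv\pm 2\pmod{\pL^{2k_\pL}}$; at primes above $2$ a similar but more delicate bookkeeping gives an $O(1)$ bound on the admissible residues modulo $\pL^{2k_\pL}$. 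By the Chinese remainder theorem the density of admissible $t$ in $\mathcal{O}_D$ is $\ll N(\bL)^{-2}$, so the number of admissible traces is $O(N(\bL)^{-2}e^R)$.

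Step 2 (classes with fixed trace). For each admissible $t$ with $t^2\neq 4$, the Latimer--MacDuffee correspondence identifies the $\Sl_2(\mathcal{O}_D)$-conjugacy classes of matrices with characteristic polynomial $X^2-tX+1$ with the isomorphism classes of proper rank-one modules over the quadratic order $\mathcal{O}_t:=\mathcal{O}_D[X]/(X^2-tX+1)$, whose number is at most the class number $h(\mathcal{O}_t)$. Since $|\mathrm{disc}(\mathcal{O}_t/\Z)|\leq |d_F|^2\cdot N_{F/\Q}(t^2-4)=|d_F|^2\cdot|t^2-4|^2\ll e^{2R}$, the classical Brauer--Siegel-type bound yields $h(\mathcal{O}_t)\ll_\epsilon|\mathrm{disc}(\mathcal{O}_t/\Z)|^{1/2+\epsilon}\ll_\epsilon e^{(1+\epsilon)R}$. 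Multiplying by the trace bound from Step 1 gives a total count of $O_\epsilon(N(\bL)^{-2}e^{(2+\epsilon)R})$, which for $\epsilon<\delta$ is dominated by $N(\bL)^{-2}e^{(2+\delta)R}$, as required.

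The main obstacle will be the local computation in Step 1 at primes lying above $2$, where $(t-2)$ and $(t+2)$ share the common factor $\nu_\pL(4)>0$ and the clean argument forcing $t\equiv\pm 2\pmod{\pL^{2k_\pL}}$ breaks down; one has to enumerate the possible splittings of the valuation $2k_\pL$ between $\nu_\pL(t-2)$ and $\nu_\pL(t+2)$ carefully in order to retain the desired $O(1)$ bound on residue classes and hence the density $N(\bL)^{-2}$. The ingredients of Step 2 are classical and apply here without modification.
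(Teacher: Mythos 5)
The paper offers no argument here at all---it just quotes \cite[Lemma 5.10]{FGT}---so a self-contained proof would be valuable, but yours has a genuine gap. You extract the entire saving $N(\bL)^{-2}$ from the restriction on the trace and then bound the number of classes with a fixed admissible trace by the full class number $h(\mathcal{O}_t)\ll_\epsilon e^{(1+\epsilon)R}$. This cannot work uniformly in $\bL$. The number of admissible traces is not $\ll N(\bL)^{-2}e^{R}$ but rather $\ll 2^{\nu(\bL)}\bigl(e^{R}N(\bL)^{-2}+1\bigr)$: each admissible residue class modulo $\bL^{2}$ contributes at least the ``$+1$'' from the lattice-point count in the disc of radius $2e^{R/2}$, and your ``density'' phrasing silently discards both this term and the factor $2^{\nu(\bL)}$ counting residues (one gets $t\equiv\pm 2$ at each odd prime, so $\asymp 2^{\nu(\bL)}$ classes mod $\bL^{2}$, not $O(1)$). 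A nonzero count only forces $N(\bL)\le |t^{2}-4|\ll e^{R}$ (since $0\neq t^{2}-4\in\bL^{2}$ implies $N(\bL)^{2}\mid |t^{2}-4|^{2}$), so the range $e^{R/2}\ll N(\bL)\ll e^{R}$ is nonempty. There Step 1 yields only $O(2^{\nu(\bL)})$ admissible traces, Step 2 then gives a total of order $e^{(1+\epsilon)R}$, while the asserted bound $N(\bL)^{-2}e^{(2+\delta)R}$ is as small as $e^{\delta R}$ when $N(\bL)\asymp e^{R}$. Your two steps therefore miss the target by essentially a factor $e^{R}$ in the worst range.

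The missing idea is that the congruence $\gamma\in\tilde{\Gamma}(\bL)$ must also be used \emph{inside} Step 2, to shrink the number of classes per characteristic polynomial. Writing $\gamma=aI+\beta$ with $\beta$ having entries in $\bL$, the eigenvalue satisfies $\lambda=a+\mu$ with $\mu^{2}=(d-a)\mu+bc$, $d-a\in\bL$, $bc\in\bL^{2}$; hence $\mathcal{O}_D+\bL^{-1}\mu\,\mathcal{O}_D$ is an order strictly containing $\mathcal{O}_D[\lambda]$, of relative discriminant $(t^{2}-4)\bL^{-2}$, and the class of $\gamma$ corresponds to a proper module over this larger order. Its class number is $\ll_\epsilon\bigl(|t^{2}-4|^{2}N(\bL)^{-4}\bigr)^{1/2+\epsilon}\ll_\epsilon e^{(1+\epsilon)R}N(\bL)^{-2}$, and with this replacement both regimes close (using $2^{\nu(\bL)}\le N(\bL)$ and $N(\bL)\ll e^{R}$ to absorb the stray factors into $e^{\delta R}$). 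This extra local saving at the primes dividing $\bL$ is precisely what the quoted result of Finis--Grunewald--Tirao supplies; without it the lemma does not follow from your two steps.
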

\begin{proof}
This follows directly from \cite[Lemma 5.10]{FGT}. 
\end{proof}

Now we take a sequence $\aL_i$ of ideals such that $N(\aL_i)$ tends to infinity
with 
$i$ and we let $\Gamma_i:=\Gamma_0'(\aL_i)$,
$X_i:=\Gamma_i\backslash\mathbb{H}^3$. 
We need to estimate the hyperbolic contribution $H_{X_i}(h_t^\tau)$. We use
formula \eqref{Hyperbolic}, and 
apply the Fourier inversion formulas of Harish-Chandra to the invariant orbital
integrals using that the Fourier transform of $h_t^\tau$ can be computed
explicitly. This was carried out in \cite{MP2}. If we combine \cite[(10.4)]{MP2}
for the special case of dimension 3 with equation \eqref{Hyperbolic},
we obtain:
\begin{align}
H_{X_i}(h_t^{\tau})=\sum_{k=0}^1 (-1)^{k+1}
e^{-t\lambda_{\tau,k}^2}\sum_{[\gamma]\in
\CC(\Gamma(D))_s-[1]}c_{\Gamma_i}(\gamma)\frac{\ell(\gamma)}{n_\Gamma(\gamma)
} 
L_{\sym}(\gamma;\sigma_{\tau,k})\frac{e^{-\ell(\gamma)^2/4t}}{(4\pi
t)^{\frac{1}{2}}}.
\end{align}
Here the $\lambda_{\tau,k}\in (0,\infty)$ are as in \cite[(8.4)]{MP2} and
the $\sigma_{\tau,k}\in\hat{M}$ are determined by their 
highest weight $\Lambda_{\sigma_{\tau,k}}$ given as in \cite[(8.5)]{MP2}.
Moreover, $n_\Gamma(\gamma)$ is the period of the closed geodesic corresponding 
to $\gamma$ and 
$L_{\sym}(\gamma;\sigma_{\tau,k})$ is as in \cite[(6.2), (10.3)]{MP2}. By
\cite[(10.11)]{MP2} and the 
definition of $L_{\sym}(\gamma;\sigma_{\tau,k})$, there exists a constant $C_0$
such 
that for all $\gamma\in\Gamma(D)_{s}-\{1\}$ one has
\[
\frac{\ell(\gamma)}{n_\Gamma(\gamma)} |L_{\sym}(\gamma;\sigma_{\tau,k})|\leq
C_0.
\]
Thus together with equation \eqref{EqInd}, Proposition \ref{Props} and Lemma \ref{LemFGT}, 
it follows that there exist constants $C_1, C_2$ such that for each $i$ we can
estimate 
\begin{align*}
&H_{X_i}(h_t^\tau)\leq C_1
2^{\nu(\aL)}\sum_{\bL|\aL}\sqrt{N(\bL)N(\aL)}\sum_{\substack{[\gamma]
\in\CC(\Gamma(D))_{s}-[1] \\ \gamma\in\tilde{\Gamma}(\bL) }}\frac{
e^{-\frac{
\ell(\gamma)^2}{4t}}}{(4\pi t)^{\frac{1}{2}}}\\
&\leq C_1 2^{\nu(\aL)}\sum_{\bL|\aL}\sqrt{N(\aL\bL)}
\sum_{k=1}^\infty\biggl(\frac{e^{-\frac{
(k\ell(\Gamma(D)))^2}{4t}}}{(4\pi t)^{\frac{1}{2}}}\\ &\times
\#\{[\gamma]\in\CC(\Gamma(D))_s\colon\gamma\in\tilde{\Gamma}(\bL)\colon
k\ell(\Gamma(D))\leq\ell(\gamma)\leq (k+1)\ell(\Gamma(D))\}\biggr)\\ 
&\leq C_2
2^{\nu(\aL)}\sqrt{N(\aL)}\sum_{\bL|\aL}N(\bL)^{-\frac{3}{2}}
\sum_{k=1}^\infty\frac{k e^{-\frac{
(k\ell(\Gamma(D)))^2}{4t}}}{(4\pi t)^{\frac{1}{2}}}
e^{(2+\delta)k\ell(\Gamma(D))}.
\end{align*}
Let $\aL=\pL_1^{k_1}\cdot\dots\cdot\pL_{\nu(\aL)}^{k_{\nu(\aL)}}$ be 
the prime ideal decomposition of $\aL$. Then we have 
\[
2^{\nu(\aL)}\sum_{\bL|\aL}N(\bL)^{-\frac{3}{2}}
\leq 2^{\nu(\aL)}\prod_{j=1}^{\nu(\aL)}\frac{1}{1-N(\pL_j)^{-\frac{3}{2}}}\leq
4^{\nu(\aL)}.
\]
Now note that there are only finitely many prime ideals with a given norm.
This implies that for every $\epsilon>0$ there exists $C(\epsilon)>0$ 
such that for all $\aL$ we have $2^{\nu(\aL)}\leq C(\epsilon)N(\aL)^{\epsilon}$.
Hence the right hand side is $O(N(\aL)^\epsilon)$ as $N(\aL)\to\infty$ for any
$\epsilon>0$, where 
the implied constant depends on $\epsilon$.
Thus there exist constants $c, C_3, C_4>0$ such that we have
\begin{align}\label{Esthyp}
H_{X_i}(h_t^\tau)\leq C_3 2^{\nu(\aL)}
\sqrt{N(\aL)}\sum_{\bL|\aL}N(\bL)^{-\frac{3}{2}} e^{-\frac{c}{t}}\leq
C_4N(\aL)^{\frac{3}{4}}e^{-\frac{c}{t}}.
\end{align}
Applying \eqref{Indhecke}, it follows that for ever $T\in(0,\infty)$ one has 
\begin{align}\label{lasteq}
\lim_{i\to\infty}\frac{1}{[\Gamma(D):\Gamma_i]}\int_0^T t^{-1}H_{X_i}(h_t^\tau)
dt=0.
\end{align}

Thus the analog of equation \eqref{limhyperb} is also verified for the present
sequence $\Gamma_i$ of subgroups derived from Hecke subgroups. Since it was
shown above that this 
sequence is cusp uniform and satisfies condition \ref{condnew}, the proof of
Theorem \ref{Mainthrm} given in section \ref{secmainres} can be carried over to
the present case. Thus 
also Theorem \ref{ThrmCongr3} is proved.


\begin{thebibliography}{bibliog}
\bibitem[A++]{A++}M. Abert, N. Bergeron, I. Biringer, T. Gelander, N. Nikolov,
J. Raimbault, I. Samet, \emph{On the growth of $L^2$-invariants for sequences of
lattices in Lie groups}, Preprint 2012, arXiv:1210.2961. 
\bibitem[AC]{AC} M. Aranes, J. Cremona, \emph{Congruence subgroups, cusps 
and Manin symbols over number fields}, to appear in: Proceedings of the Summer 
School and Conference "Computations with Modular Forms" (Heidelberg 2011), 
G. Boeckle, G. Wiese and J. Voight (eds.), Springer 2013, 
http://homepages.warwick.ac.uk/staff/J.E.Cremona/papers/nfcusp.pdf.
\bibitem[Ba]{Bass}H. Bass, \emph{K-theory and stable algebra}. Inst. Hautes
\'Etudes
Sci. Publ. Math. No. \textbf{22} (1964), 5-60.
\bibitem[BHW]{BHW} U. Betke, M. Henk, J. M. Wills, \emph{Successive-minima-type
inequalities}, Discrete Comput. Geom. \textbf{9} (1993), no. 2, 165-175.
\bibitem[Bo]{Borel} A. Borel, \emph{Introduction aux groupes
arithm\'{e}tiques}, 
Publications de l'Institut de Math\'{e}matique de l'Universit\'{e} de
Strasbourg, XV. Actualit\'{e}s Scientifiques et Industrielles, No. 1341 Hermann,
Paris 1969.
\bibitem[BoHa]{BH} A. Borel, Harish-Chandra, \emph{Arithmetic subgroups of
algebraic groups}, Ann. of Math. (2) \textbf{75}, 1962, 485-535.
\bibitem[BV]{BV} N. Bergeron, A. Venkatesh, \emph{The asymptotic growth 
of torsion homology for arithmetic groups}, J. Inst. Math. Jussieu \textbf{12}
(2013), no. 2, 391-447.
\bibitem[Co]{Co} L. Corwin, \emph{The Plancherel measure in nilpotent Lie
groups as a limit of point measures}, 
Math. Z. \textbf{155} (1977), no. 2, 151-162. 
\bibitem[CV]{CV}Y. Colin de Verdi\`{e}re, \emph{Une nouvelle d\'{e}monstration
du prolongement m\'{e}romorphe des s\'{e}ries d'Eisenstein},
C. R. Acad. Sci. Paris S\'{e}r. I Math. \textbf{293} (1981), no. 7, 361-363. 
\bibitem[Do1]{Do1} H. Donnelly, \emph{Asymptotic expansions for the compact
quotients of properly
discontinuous group actions}, Illinois J. Math. \textbf{23} (1979), no. 3,
485-496.
\bibitem[DH]{DH} A. Deitmar, W. Hoffmann, \emph{Spectral estimates for towers of
noncompact quotients}, Canad. J. Math. \textbf{51} (1999), no. 2, 266-293.
\bibitem[EGM]{EGM} J. Elstrodt, F. Grunewald, E. Mennicke, \emph{Groups acting
on hyperbolic space}, Harmonic analysis and number theory. Springer Monographs
in Mathematics. Springer-Verlag, Berlin, 1998.
\bibitem[FGT]{FGT} T. Finis, F. Grunewald, P. Tirao, \emph{The cohomology of
lattices in $\Sl(2,\C)$}, Experiment. Math., \textbf{19} (2010), no. 1, 29-63.
\bibitem[GW]{Goodman} R. Goodman, N. Wallach, \emph{Representations and
invariants of the classical groups}, Encyclopedia of Mathematics and its
Applications, 68. Cambridge University Press, Cambridge, 1998.
\bibitem[Hu]{Hu} G. Humbert, \emph{Sur la mesure de classes d'Hermite de
discriminant donne dans un corp
quadratique imaginaire}, C.R. Acad. Sci. Paris \textbf{169} (1919), 448-454.
\bibitem[Ka]{Kato} T. Kato \emph{Perturbation theory for linear operators},
Reprint of the 1980 edition, Classics in Mathematics, Springer-Verlag, Berlin,
1995.
\bibitem[Ke]{Ke} R. Kellerhals, \emph{Volumes of cusped hyperbolic manifolds},
Topology \textbf{37} (1998), no. 4, 719-734. 
\bibitem[Kn1] {Knapp} A. Knapp, \emph{Representation theory of semisimple
groups}, Princeton University Press, Princeton, 2001.
\bibitem[Kn2]{Knapp2} A. Knapp, \emph{Lie Groups Beyond an introduction},
Second Edition, Birkh\"auser, Boston, 2002.
\bibitem[KS] {Knapp Stein} A. Knapp, E. Stein, \emph{Intertwining operators for
semisimple Lie groups}, Annals of Math. \textbf{93} (1971), 489-578.
\bibitem[Lo]{Lo} J. Lott, \emph{Heat kernels on covering spaces and 
topological invariants}. J. Differential Geom. \textbf{35} (1992), no. 2, 
471-510. 
\bibitem[LM]{LM} H.B. Lawson, M.-L. Michelsohn, \emph{Spin geometry},
Princeton Mathematical Series, 38. Princeton University Press, Princeton, NJ.
\bibitem[MtM]{MtM} Y.Matsushima and S. Murakami, \emph{On vector bundle valued 
harmonic forms and automorphic forms on symmetric riemannian manifolds}, 
Ann. of Math. (2) \textbf{78} (1963), 365-416.
\bibitem[Mi1]{Mi1} R. J. Miatello, \emph{The Minakshisundaram-Pleijel 
coefficients for the vector-valued heat kernel on compact locally symmetric 
spaces of negative curvature}. Trans. Amer. Math. Soc. \textbf{260} (1980), 
no. 1, 1 - 33.
\bibitem[Mu1]{Mu1} W. M\"uller, \emph{The trace class conjecture in the theory 
of automorphic forms}. Ann. of Math. (2) \textbf{130} (1989), no. 3, 473-529. 
\bibitem[Mu2]{Mu2} W. M\"uller, \emph{Analytic torsion and R-torsion for 
unimodular representations}, J. Amer. Math. Soc. \textbf{6} (1993), 721-753
\bibitem[MP1]{MP1} W. M\"uller, J. Pfaff, \emph{On the asymptotics of the 
Ray-Singer analytic torsion for compact hyperbolic manifolds}, 
Intern. Math. Research Notices 2013, no. 13, 2945-2984.
\bibitem[MP2] {MP2} W. M\"uller, J. Pfaff, \emph{Analytic torsion of complete
hyperbolic manifolds of finite volume}, J. Funct. Anal. \textbf{263} (2012), no. 9,
2615-2675.
\bibitem[Ne]{Neu} J. Neukirch, \emph{Algebraic number theory}, Translated from
the 1992 German original and with a note by Norbert Schappacher. With a foreword
by G. Harder. Grundlehren der Mathematischen Wissenschaften, 322.
Springer-Verlag, Berlin, 1999.
\bibitem[Pf1]{Pf} J. Pfaff, \emph{Selberg zeta functions on odd-dimensional
hyperbolic manifolds of finite volume}, to appear in J. Reine Angew. 
Mathematik, arXiv:1205.1754. 
\bibitem[Pf2]{Pf2} J. Pfaff, \emph{Exponential growth of homological torsion for
towers of
congruence subgroups of Bianchi groups}, Preprint 2013, arXiv:1302.3079.
\bibitem[Ra1]{Ra1} J. Raimbault, \emph{Asymptotics of analytic torsion for
hyperbolic three-manifolds}, Preprint 2012, arXiv:1212.3161.
\bibitem[Ra2]{Ra2} J. Raimbault, \emph{Analytic, Reidemeister and homological
torsion for congruence three-manifolds}, Peprint 2013, arXiv:1307.2845.
\bibitem[Sa]{Sa} P. Sarnak,  \emph{The arithmetic and geometry of some
hyperbolic three-manifolds}, Acta Math. \textbf{151} (1983), no. 3-4, 253-295.
\bibitem[Sch]{Sch} P. Scholze, \emph{On torsion in the cohomology of locally
symmetric varieties}, Peprint 2013, arXiv:1306.2070.
\bibitem[Se]{Se} A. Selberg, \emph{Harmonic analysis}, in ''Collected 
Papers'', Vol. I, Springer-Verlag, Berlin-Heidelberg-New York (1989), 626--674.
\bibitem[Sh]{Shi} G. Shimura, \emph{Introduction to the arithmetic theory of
automorphic functions}, Kano Memorial Lectures, No. 1. Publications of the
Mathematical Society of Japan, No. 11. Iwanami Shoten, Publishers, Tokyo;
Princeton University Press, Princeton, N.J., 1971.
\bibitem[Sh]{Sh} M.A. Shubin, \emph{Pseudodifferential operators and spectral 
theory.} Second edition. Springer-Verlag, Berlin, 2001. 
\bibitem[Ter]{Terras} A. Terras, \emph{Harmonic analysis on symmetric spaces and
applications I}, Springer, New York, 1985.
\bibitem[Wa]{Warner} G. Warner, \emph{Selberg's trace formula for
nonuniform lattices: the R-rank one case}, Studies in algebra and number theory,
Adv. in Math. Suppl. Stud. \textbf{6}, Academic Press, New York-London, 1979. 
\bibitem[Wal]{Wallach} N. Wallach, \emph{On the Selberg trace formula in the
case of compact quotient}, Bull. Amer. Math. Soc. \textbf{82} (1976), no.2,
171-195.
\end{thebibliography}
\end{document}